\documentclass[12pt, twoside]{article}
\usepackage{amsmath,amsthm,amssymb}
\usepackage{times}
\usepackage{enumerate}
\usepackage{bm}
\usepackage[all]{xy}
\usepackage{mathrsfs}
\usepackage{amscd}
\usepackage{color}
\usepackage{comment}
\def\titlerunning#1{\gdef\titrun{#1}}
\makeatletter
\def\author#1{\gdef\autrun{\def\and{\unskip, }#1}\gdef\@author{#1}}
\def\address#1{{\def\and{\\\hspace*{18pt}}\renewcommand{\thefootnote}{}%
		\footnote {#1}}%
	\markboth{\autrun}{\titrun}}
\makeatother
\def\email#1{e-mail: #1}
\def\subjclass#1{{\renewcommand{\thefootnote}{}%
		\footnote{\emph{Mathematics Subject Classification (2020):} #1}}}
\def\keywords#1{\par\medskip
	\noindent\textbf{Keywords.} #1}

\newtheorem{theorem}{Theorem}[section]
\newtheorem{corollary}[theorem]{Corollary}
\newtheorem{lemma}[theorem]{Lemma}
\newtheorem{proposition}[theorem]{Proposition}
\theoremstyle{definition}
\newtheorem{definition}[theorem]{Definition}
\newtheorem{remark}[theorem]{Remark}

\numberwithin{equation}{section}

\xyoption{all}

\def \N {\mathbb{N}}
\def \C {\mathbb{C}}

\def \a {\alpha }
\def \b {\beta}

\def \de {\delta}
\def \De {\Delta}
\def \la {\lambda}
\def \La {\Lambda}
\def\w {\omega}
\def\Om{\Omega}

\def\pa{\partial}
\def\na {\nabla}
\def\Ga{\Gamma}

\begin{document}
	\baselineskip=17pt
	
	\titlerunning{Vafa--Witten Equations and Conformal Geometry }
	\title{Vafa--Witten Equations and Conformal Geometry}
	
\author{Teng Huang and Pan Zhang }

\date{}

\maketitle
	\address{T. Huang: School of Mathematical Sciences, University of Science and Technology of China; CAS Key Laboratory of Wu Wen-Tsun Mathematics, University of Science  and Technology of China, Hefei, Anhui, 230026, P. R. China; \email{htmath@ustc.edu.cn;htustc@gmail.com}}
\address{P. Zhang: School of Mathematical Sciences, Anhui University, Hefei, Anhui, 230026, People’s Republic of China; \email{panzhang20100@ahu.edu.cn}}
\subjclass{58E15;81T13}

\begin{abstract}
In this article, we establish geometric and analytic constraints imposed by the existence of nontrivial solutions to the Vafa–Witten equations on closed 4-manifolds. Using conformal invariance and refined Bochner-type estimates, we first prove an inequality relating the Yamabe constant \(Y(g)\) to the \(L^2\)-norm of the self-dual Weyl tensor: \(Y(g)\le 2\sqrt{6}\|W_g^+\|_{L^2}\); when \(Y(g)>0\), this yields a topological lower bound \( \int_M |W_g^+|^2 \ge \frac{4}{3}\pi^2(2\chi(M)+3\sigma(M))\). In the equality case, we show that the manifold must be K\"ahler with nonnegative scalar curvature and that the connection is reducible. As an application, for positive Einstein manifolds with \(\operatorname{Ric}=3g\) admitting an irreducible Vafa--Witten solution, we obtain a sharp volume bound and prove the manifold cannot be K\"ahler.  

Through dimensional reduction \(S^1\times N\), we establish a one-to-one correspondence between stable flat connections on a closed 3-manifold \(N\)  and \(S^1\)-invariant Vafa--Witten solutions, which yields a new estimate for the Yamabe constant \(Y(g_{S^1\times N})\le 2\sqrt{6\pi}\big(\int_N |\operatorname{Ric}(g_N)-\frac13 R_{g_N}g_N|^2\big)^{1/2}\). Finally, under a regularity assumption that every anti-self-dual connection in the compactified moduli space is regular, we prove an energy gap: there exists \(\varepsilon(g,P)>0\) such that any Vafa--Witten solution satisfies either \(F_A^+\equiv0\) or\(\|F_A^+\|_{L^2}\ge\varepsilon\).
\end{abstract}

\keywords{Vafa--Witten equations, Yamabe constant, stable flat connections, conformal geometry}

\section{Introduction}
The study of gauge‑theoretic equations on four‑manifolds has been a cornerstone of both differential geometry and mathematical physics. 
In their seminal work, Vafa and Witten \cite{VW94} introduced a set of such equations on a four‑manifold, 
motivated by the investigation of the $S$-duality conjecture within the framework of topologically twisted $\mathcal{N}=4$ supersymmetric Yang--Mills theory. 
The moduli space of solutions to these equations is anticipated to yield potentially novel invariants of the underlying four-manifold.
 From a physical perspective, the resulting partition functions are expected to serve as generating functions for the Euler characteristics 
 of anti-self-dual (ASD) instanton moduli spaces, with the $S$-duality conjecture predicting that these partition functions 
 exhibit remarkable modularity properties.

The Vafa--Witten equations have also garnered significant attention from alternative geometric viewpoints. 
Haydys \cite{Hay15} and Witten \cite{Wit12} have considered these equations in the context of a program aimed at the `categorification' of Khovanov homology. 
In this approach, a five-dimensional gauge theory is utilized, wherein the Vafa--Witten equations arise naturally as a dimensional reduction of the five-dimensional field equations. Consequently, the solution spaces of the Vafa--Witten equations are expected to play a crucial role in the construction of a Casson--Floer type homology theory.

To establish our framework,  let $M$ be a closed, oriented and smooth Riemannian 4-manifold equipped with a Riemannian metric $g$, 
and let $P\rightarrow M$ be a principal $G$-bundle over $M$ with $G$ being a compact Lie group. 
We denote by $\mathcal{A}_{P}$ the space of all connections on $P$, and by $\Om^{2,+}(M,\mathfrak{g}_{P})$ the space of self-dual two-forms taking values in the adjoint bundle $\mathfrak{g}_{P}$ associated to $P$. 
The Vafa--Witten equations are then formulated for a triple  $(A,B,C)\in\mathcal{A}_{P}\times \Om^{2,+}(M,\mathfrak{g}_{P})\times\Om^{0}(M,\mathfrak{g}_{P})$ as follows:
\begin{equation}\label{E6}
\left\{
\begin{split}
&d_{A}C+d_{A}^{+,\ast}B=0,\\
&F_{A}^{+}+\frac{1}{8}[B_{\bullet}B]+\frac{1}{2}[B,C]=0.
\end{split}
\right.
\end{equation}
These equations are invariant under the action of the gauge group, and the Vafa--Witten moduli space is defined as the space of gauge equivalence classes of such solutions \cite{DG22,Gua23,Tan17}. 

On a closed $4$-manifold, it is a standard result that the Vafa--Witten equations (\ref{E6}) are equivalent to the following system:
\begin{equation}\label{E7}
\left\{
\begin{split}
&d_{A}^{+,\ast}B=d_{A}C=0,\\
&F_{A}^{+}+\frac{1}{8}[B_{\bullet}B]+\frac{1}{2}[B,C]=0.\\
\end{split}
\right.
\end{equation}
In the specific case where the gauge group is $G=SU(2)$ or $SO(3)$ and the connection $A$ is irreducible, the condition $d_{A}C=0$ forces  $C=0$. 
Consequently, the system (\ref{E7}) simplifies to the \textit{reduced Vafa--Witten} equations:
\begin{equation}\label{E8}
\left\{ 
\begin{split}
&d^{+,\ast}_{A}B=0,\\
&F_{A}^{+}+\frac{1}{8}[B_{\bullet}B]=0.\\
\end{split}
\right.
\end{equation}

In the remainder of this article, the term ``Vafa--Witten equations" will specifically refer to the reduced system (\ref{E8}). 
Mares conducted an in-depth study of the analytic aspects of these equations in his Ph.D. thesis \cite{Mar10}. A notable challenge in this theory, analogous to the situation encountered in Hitchin's equations \cite{Hit87},  is the lack of a general a priori $L^{2}$-bound on the curvature $F_{A}$ for a connection $A$ satisfying the equations. This absence renders the study of compactness properties for the moduli space of solutions particularly intricate. Taubes \cite{Tau13,Tau14,Tau17} made significant progress in the study of compactness, partially characterizing the behavior of non-convergent sequences; Tanaka \cite{Tan15,Tan19} investigated the case where the curvature sequence has no concentration points as well as the structure of singular sets on Kähler surfaces.

Furthermore, our current understanding of the existence of solutions to the Vafa--Witten equations remains relatively limited, except in the restricted setting of Kähler surfaces. In this case, Tanaka gave a necessary and sufficient condition for the existence of solutions \cite{Tan15}; simultaneously, Tanaka and Thomas \cite{TT20} proved a vanishing theorem: if $\mathrm{deg}(K_M)\leq 0$, then $\b=0$. Subsequently, Chen \cite{Che24} generalized this result to higher-dimensional Kähler manifolds.

Despite notable advances in the study of the Vafa--Witten equations (cf. \cite{Che24,DG22,Gua23,Hua19,Hua22,Mar10,Tan15,Tan17}), a number of fundamental geometric questions still remain open. For instance:
\begin{itemize}
	\item[(i)] \textbf{What are the necessary conditions on $(M,g)$ for the existence of a nontrivial Vafa--Witten solution?}
	\item[(ii)]\textbf{How does the existence of such a solution constrain the conformal geometry of $M$?}
\end{itemize}
These questions are the central focus of this work. The primary objective of this article is to investigate the geometric constraints that a manifold    must satisfy in order to admit non-trivial solutions to the Vafa--Witten equations. Following the spirit of \cite{LeB97}, where the perturbed Seiberg--Witten equations were employed to deduce bounds on the Yamabe invariant and scalar curvature for certain four-manifolds, we adopt a similar approach. A key observation underpinning our work is the conformal invariance of the Vafa--Witten equations with respect to the metric on a four-manifold. By exploiting this invariance, we are able to derive inequalities for the Yamabe invariant directly from the Vafa--Witten equations.

Our approach is inspired by the work of Gursky, Kelleher and Streets \cite{GKS18}, who established a conformally invariant gap theorem for Yang--Mills connections. Their proof relies on a refined Bochner-type estimate applied to the self-dual component of the curvature, employing solutions to a modified Yamabe problem as constructed in \cite{Gur00,GL98}.  Notably, the bound they obtain is optimal, being saturated by the $SU(2)$ ADHM/BPST instanton on the standard four-sphere. In their setting, the resulting inequality involves the $L^{2}$-norm of the self-dual curvature $F_{A}^{+}$ and depends on a structure constant. 

Adapting their analytical framework to the Vafa--Witten equations, we find that the nonlinear structure simplifies considerably. This leads to a sharper inequality and, under the equality case, yields stronger rigidity conclusions than those available in the Yang--Mills setting. Specifically, the conformal invariance and the algebraic structure of the Vafa--Witten equations allow us to prove a topological lower bound  $2\chi+3\sigma\geq\frac{1}{24}Y^{2}(g)$ under a Ricci curvature condition (Corollary \ref{C2}) and a rigidity result forcing the manifold to be Kähler (Theorem \ref{T2})-none of which have analogues in Yang--Mills theory. Finally, via dimensional reduction to three-manifolds, we obtain new geometric constraints for stable flat connections.

\begin{theorem}\label{T1}
Let $(M,g)$ be a closed, smooth, four-dimensional manifold. Suppose $(A,B)$ is a solution to the Vafa-Witten equations with $B\not\equiv0$. 
Then the following hold:

\noindent\textup{(1)} The Yamabe constant $Y(g)$ satisfies the inequality
\begin{equation} \label{E0}
Y(g)\leq 2\sqrt{6}\|W^{+}_{g}\|_{L^{2}},
\end{equation}
where $W^{+}_{g}$ denotes the self-dual Weyl tensor of $g$.

\noindent\textup{(2)} If the Yamabe constant $Y(g)>0$, then the following topological bound holds:
\begin{equation}\label{E10}
\int_{M}|W^{+}_{g}|^{2}\,d\mathrm{vol}_{g}\geq\frac{4}{3}\pi^{2}\bigg{(}2\chi(M)+3\sigma(M)\bigg{)}.
\end{equation}

\noindent\textup{(3)} If $Y(g)=2\sqrt{6}\|W^{+}_{g}\|_{L^{2}}$, then 
\[
F_{A}^{+}=0\quad\text{and}\quad [B_{\bullet}B]=\nabla_{A}B=0.
\]
\end{theorem}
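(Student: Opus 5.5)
We will follow the Gursky--Kelleher--Streets strategy, with $B$ in the role of $F_A^+$. The plan has four steps: derive a Weitzenböck identity for $B$, turn it into a differential inequality for $|B|$, exploit conformal invariance through a modified Yamabe metric, and then read off the three conclusions.

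\textbf{Weitzenböck identity.} Since $d_A^{+,\ast}B=0$ and $B$ is self-dual, we have $(d_A d_A^\ast + d_A^\ast d_A)B = 0$. The Weitzenböck formula on $\Omega^{2,+}(\mathfrak g_P)$ then gives
\[
0=\nabla_A^\ast\nabla_A B+\tfrac{R}{3}B-2W^+(B)+[F_A^+,B].
\]
Substituting the curvature equation $F_A^+=-\tfrac18[B_\bullet B]$, the last term becomes $-\tfrac18[[B_\bullet B],B]$. Pairing with $B$ yields
\[
\tfrac12\Delta|B|^2+|\nabla_A B|^2+\tfrac{R}{3}|B|^2-2\langle W^+(B),B\rangle+\tfrac18|[B_\bullet B]|^2=0,
\]
because $\langle[[B_\bullet B],B],B\rangle$ has a sign and equals $\pm|[B_\bullet B]|^2$ up to a normalization constant. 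The key point is that the nonlinear term now has a favorable sign. This is the simplification relative to Yang--Mills, where a cubic term must be estimated with a structure constant.

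\textbf{Pointwise inequality for $|B|$.} Use $\langle W^+(B),B\rangle\le \tfrac{2}{\sqrt6}|W^+||B|^2$; this is the sharp bound for a trace-free endomorphism of the rank-3 bundle $\Lambda^+$, applied to each Lie-algebra component. Combine it with Kato's inequality $|\nabla|B||\le|\nabla_A B|$. Discarding the nonnegative term $\tfrac18|[B_\bullet B]|^2$, we get, wherever $B\ne0$,
\[
|B|\Delta|B|+\tfrac{R}{3}|B|^2-\tfrac{4}{\sqrt6}|W^+||B|^2\le0
\]
(we will use the Laplacian sign convention in which this is the correct direction).

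\textbf{Conformal invariance and the modified Yamabe metric.} The equations are conformally invariant once $B$ is weighted appropriately, so we may pass to any metric in the conformal class; by unique continuation $B\not\equiv0$ for every conformal representative. Following Gursky \cite{Gur00}, we choose $\tilde g=u^2g$ solving the modified Yamabe problem in which $R-2\sqrt6|W^+|$ is constant, with the constant having the sign of the conformally invariant quantity
\[
\inf_u \int\bigl(6|\nabla u|^2+(R-2\sqrt6|W^+|)u^2\bigr)\big/\|u\|_{L^4}^2 .
\]
Suppose, for contradiction, that this modified constant is positive. Then in $\tilde g$ we have $\tfrac R3-\tfrac{4}{\sqrt6}|W^+|>0$. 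At a maximum of $|B|$ the inequality above gives a contradiction, so $B\equiv0$. Hence the modified constant is $\le0$. This means there is a metric in the conformal class with $\int R\,u^2\le 2\sqrt6\int|W^+|u^2$. Applying Hölder and the conformal invariance of $\|W^+\|_{L^2}$ to this test function gives $Y(g)\le 2\sqrt6\|W^+\|_{L^2}$, which is part (1). The delicate parts are the sign conventions and the Kato step where $|B|$ vanishes; we will handle the latter by working with $(|B|^2+\epsilon)^{1/2}$ or an integral version.

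\textbf{Parts (2) and (3).} For (2), combine (1) with the Gauss--Bonnet--Chern formula
\[
8\pi^2(2\chi+3\sigma)=\int\Bigl(2|W^+|^2+\tfrac{R^2}{24}-\tfrac12|E|^2\Bigr).
\]
Take $g$ to be a Yamabe metric, so that $\int R^2 = Y^2$ (normalizing volume). Using $Y\le 2\sqrt6\|W^+\|$, i.e. $Y^2\le 24\|W^+\|^2$, we obtain $8\pi^2(2\chi+3\sigma)\le 3\int|W^+|^2$, which gives (2).

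For (3), equality forces every inequality used above to be an equality. In particular the discarded terms must vanish: $\nabla_A B=0$ and $[B_\bullet B]=0$. The curvature equation then gives $F_A^+=0$. Making this rigorous requires a careful limiting argument in the modified Yamabe problem at the borderline value $0$, namely that a minimizer exists and that the integral identity is saturated. We expect this borderline analysis to be the main obstacle.
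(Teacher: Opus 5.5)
Your proof of (1) follows the paper's route. Your proof of (2) takes a different, simpler route that works once a constant is corrected. The genuine gap is in (3).

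\textbf{Part (1).} You rule out a conformal metric with $R-2\sqrt6|W^+|>0$ by a Bochner argument; using the maximum principle instead of integration makes no difference. You then apply Gursky's trichotomy and compare with the Yamabe quotient via H\"older. However, your stated reason for passing to $\tilde g=u^2g$ is false, and the paper's Lemma~\ref{L3} makes the same error. No weighting of $B$ makes the equations conformally invariant:
\begin{itemize}
\item By \eqref{E3} the bracket contains $g^{kl}$, so $[B_\bullet B]_{\tilde g}=u^{-2}[B_\bullet B]_g$.
\item The curvature equation would therefore force the weight $uB$.
\item But $d_A(uB)=du\wedge B$, which is nonzero unless $u$ is constant, since wedging with a nonzero self-dual form is injective on $1$-forms.
\end{itemize}
What is true, and is all you actually use, is weaker. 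The unweighted $B$ is still $\tilde g$-self-dual and $d_A$-closed, hence $\tilde g$-harmonic. Also $F_A^+=-\frac{u^2}{8}[B_\bullet B]_{\tilde g}$, so the nonlinear Weitzenb\"ock term in $\tilde g$ becomes $8u^{-2}|F_A^+|^2_{\tilde g}\ge 0$ and keeps its sign.

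The paper's Proposition~\ref{P4} avoids changing the metric altogether. It applies the refined Kato constant $\frac32$ to $|B|^{1/2}$ and gets $\hat Y(g)\le 0$ directly in $g$. By contrast, in $g$ your ordinary Kato inequality only shows that $3\Delta_g+R_g-2\sqrt6|W^+_g|$ has nonpositive first eigenvalue. That operator is not conformally covariant and does not control $\hat Y(g)$, which is why your route needs the passage to $\tilde g$.

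\textbf{Part (2).} You evaluate Gauss--Bonnet at a Yamabe metric, where $\int R^2=Y^2\le 24\|W^+\|_{L^2}^2$ by (1) and $Y>0$. The paper instead invokes Gursky's Proposition~\ref{P2}. Your route makes (2) a formal consequence of (1) plus the Yamabe problem, which is more economical. However, your normalization is off by a factor of $2$. The correct identity, cf.~\eqref{E18}, is $4\pi^2(2\chi+3\sigma)=\int(2|W^+|^2+\frac{R^2}{24}-\frac12|E|^2)$. With $8\pi^2$ you would obtain $\int|W^+|^2\ge\frac83\pi^2(2\chi+3\sigma)$. That fails for Fubini--Study on $\mathbb{CP}^2$, where $\int|W^+|^2=12\pi^2$, $2\chi+3\sigma=9$, and the solution with $A$ trivial and $B=\xi\otimes\omega$ exists.

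\textbf{Part (3): the gap.} The chain that proves (1) is
\[
Y(g)\le\frac{\int_M R_{\tilde g}\,dvol_{\tilde g}}{\sqrt{\mathrm{Vol}(\tilde g)}}\le\frac{2\sqrt6\int_M|W^+_{\tilde g}|\,dvol_{\tilde g}}{\sqrt{\mathrm{Vol}(\tilde g)}}\le 2\sqrt6\|W^+_g\|_{L^2}.
\]
This chain does not involve $B$ at all; the Bochner inequality entered only to exclude the positive branch. So ``every inequality becomes an equality, hence the discarded terms vanish'' does not follow.

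The missing step is the paper's:
\begin{itemize}
\item Since $F^+_{\tilde g}=R_{\tilde g}-2\sqrt6|W^+_{\tilde g}|\le 0$ pointwise, equality in the middle step forces $F^+_{\tilde g}\equiv 0$.
\item Now integrate the Bochner identity in $\tilde g$, with norms taken in $\tilde g$: $0\ge\int_M\bigl(|\nabla_AB|^2+8u^{-2}|F_A^+|^2\bigr)dvol_{\tilde g}$.
\item Hence $\nabla_AB=0$ and $F_A^+=0$, i.e.\ $[B_\bullet B]=0$. Here $\nabla_AB=0$ holds for the Levi-Civita connection of $\tilde g$; the paper's argument likewise gives only this.
\end{itemize}
No borderline minimization or limiting argument is needed. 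When $\hat Y(g)=0$, the positive first eigenfunction $u$ of $\mathcal L_g$ gives $\tilde g=u^2g$ with $F^+_{\tilde g}=u^{-3}\mathcal L_g u\equiv 0$ (Proposition~\ref{P1}).
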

\begin{remark}
Our inequality \eqref{E0} can be viewed as an extention of the conformally invariant gap estimate for Yang--Mills connections established by Gursky--Kelleher--Streets \cite[Theorem 1.1]{GKS18}:
\[Y(g) \le 3\gamma_1 \|F_\nabla^+\|_{L^2} + 2\sqrt{6}\,\|W^+\|_{L^2},\]
where $\gamma_1$ is a constant depending on the structure group of the bundle. In the setting of the Vafa--Witten equations, the self-dual curvature term $F_A^+$ is coupled to the $B$-fieldvia the equation $F_A^+ = -\frac{1}{8}[B_{\bullet} B]$, and the nonlinear structure simplifies so that the curvature contribution is absorbed, yielding the sharper bound \eqref{E0} with no dependence on the bundle structure.
	
The topological lower bound \eqref{E10} was previously obtained by Gursky under different hypotheses: for all metrics with nonnegative Yamabe invariant on manifolds with $b^{+}>0$ \cite[Theorem 1]{Gur98}; and for metrics with positive Yamabe invariant satisfying the harmonic Weyl condition $\delta W^+=0$ \cite[Theorem A]{Gur00}. In contrast, our estimate \eqref{E10} is derived directly from the existence of a nontrivial solution to the Vafa--Witten equations and requires no a priori curvature or topological assumption beyond $Y(g)>0$. 
\end{remark}
Theorem \ref{T1} provides a general inequality linking the Yamabe constant, the Weyl curvature, and the Vafa--Witten solution. Under a stronger control on the Ricci curvature-namely, that its eigenvalues are uniformly close to $\frac{R_{g}}{4}$, we obtain the following topological obstruction.

\begin{corollary}\label{C2}
	Let $(M,g)$ be a closed, four-dimensional manifold whose Ricci curvature satisfies
	$$\mathrm{Ric}(g)\geq\frac{1}{6}R_{g}>0.$$
	Suppose $(A,B)$ is a solution of Vafa--Witten equations on a principal $G$-bundle $P\rightarrow M$ with $B\not\equiv0$, then
	\[2\pi^{2}\bigg{(}2\chi(M)+3\sigma(M)\bigg{)}\geq\frac{1}{24}Y^{2}(g).\]
\end{corollary}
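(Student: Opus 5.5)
The plan is to combine Theorem \ref{T1}(1) with the Chern--Gauss--Bonnet and Hirzebruch signature formulas, using the Ricci pinching only to bound the traceless Ricci term. Since $R_g>0$, we have $Y(g)>0$. Theorem \ref{T1}(1) then gives
\[
\int_M|W^+_g|^2\,d\mathrm{vol}_g\ge \frac{1}{24}Y^2(g).
\]
The Gauss--Bonnet and signature formulas combine to
\[
4\pi^2\bigl(2\chi(M)+3\sigma(M)\bigr)=\int_M\Bigl(2|W^+_g|^2+\frac{R_g^2}{24}-\frac12|E|^2\Bigr)d\mathrm{vol}_g ,
\]
where $E=\mathrm{Ric}-\frac{R_g}{4}g$. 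The factor $2$ on $|W^+|^2$ depends on the normalization of $|W|$, and I will fix it consistently with the constant in Theorem \ref{T1}. In the normalization behind \eqref{E10}, this identity reads $8\pi^2\chi=\int(|W|^2-\frac12|E|^2+\frac{R^2}{24})$. It is exactly what yields \eqref{E10} when one drops the $R^2$ and $|E|^2$ terms in the opposite direction.

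The key step is to show that the pinching $\mathrm{Ric}\ge\frac16R_g$ makes $\frac{R_g^2}{24}-\frac12|E|^2\ge0$ pointwise. Write the eigenvalues of $E$ as $e_i$, so that $\sum e_i=0$. The hypothesis gives $e_i\ge\frac{R}{6}-\frac{R}{4}=-\frac{R}{12}$. Under the constraint $\sum e_i=0$ with each $e_i\ge -R/12$, the quantity $\sum e_i^2$ is maximized at a vertex of the simplex: three eigenvalues equal to $-R/12$ and one equal to $R/4$. This gives
\[
|E|^2\le 3\cdot\frac{R^2}{144}+\frac{R^2}{16}=\frac{R^2}{12},
\]
so $\frac12|E|^2\le\frac{R^2}{24}$, which is exactly the needed pointwise inequality.

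Combining the two displays gives
\[
4\pi^2\bigl(2\chi+3\sigma\bigr)\ge 2\int_M|W^+_g|^2\ge \frac{1}{12}Y^2(g),
\]
that is, $2\pi^2(2\chi+3\sigma)\ge\frac{1}{24}Y^2(g)$, as claimed. The main obstacle is bookkeeping: the coefficient of $|W^+|^2$ in the Gauss--Bonnet--signature combination must use the same norm convention as the constant $2\sqrt6$ in \eqref{E0}. I will check this by rederiving \eqref{E10} from the same identity, which requires that combination to read $\int|W^+|^2\ge\frac{4}{3}\pi^2(2\chi+3\sigma)$ when $R^2$ is bounded via $Y$ appropriately. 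If the conventions instead produce coefficient $1$, the final constant changes by a factor $2$, and I would adjust by using the sharper Yamabe-type bound $\int R^2\ge Y^2$ in place of simply discarding the nonnegative term.
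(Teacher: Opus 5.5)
Your proposal is correct and follows the paper's own argument. Your Gauss--Bonnet--signature identity is exactly twice the paper's \eqref{E18}, so the normalization worry in your last paragraph does not arise. Your vertex-of-the-simplex bound $|E|^2\le \frac{1}{12}R_g^2$ is the same estimate the paper obtains from Lemma \ref{L5}, and like the paper you then combine it with Theorem \ref{T1}(1), using $Y(g)>0$ because $R_g>0$.
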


Having established this general inequality, we proceed to examine the limiting case where the bound is saturated. For an irreducible connection on a principal $SU(2)$ or $SO(3)$-bundle,  the kernel of $d_{A}:\Om^{0}(M,\mathfrak{g}_{P})\rightarrow \Om^{1}(M,\mathfrak{g}_{P})$ is trivial (cf. \cite{DK90,FU91}).

\begin{theorem}\label{T2}
Let $(M,g)$ be a closed, smooth, four-manifold. Suppose that $(A,B)$ is a solution to the Vafa--Witten equations with $B\not\equiv0$ and structure group $SU(2)$ or $SO(3)$. Then the equality $Y(g)=2\sqrt{6}\|W^{+}_{g}\|_{L^{2}}$ holds if and only if $(M,g)$ a K\"{a}hler manifold of non-negative scalar curvature and the connection $A$ is reducible.
\end{theorem}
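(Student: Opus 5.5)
Both directions rest on Theorem \ref{T1}(3).

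\emph{Forward direction.} Assume $Y(g)=2\sqrt{6}\|W^{+}_{g}\|_{L^{2}}$. By Theorem \ref{T1}(3) we have $F_A^+=0$, $[B_\bullet B]=0$ and $\nabla_A B=0$.

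First, $A$ is reducible. Since $\nabla_A B=0$, the pointwise norm $|B|$ is constant. Because $B\not\equiv 0$, $B$ is a nonzero parallel section of $\Lambda^+\otimes\mathfrak{g}_P$.

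For $G=SU(2)$ or $SO(3)$, $\mathfrak{g}_P$ has rank $3$. View $B$ at each point as a linear map $\Lambda^{+}{}^{*}\to\mathfrak{g}_P$, i.e.\ a $3\times 3$ matrix. The condition $[B_\bullet B]=0$ says $\sum_{i,j}\epsilon_{ijk}[B_i,B_j]=0$, where $B=\sum_i \omega_i\otimes B_i$ in a local orthonormal frame of $\Lambda^+$. Using the identification $\mathfrak{su}(2)\cong(\mathbb{R}^3,\times)$, one checks that this forces the $B_i$ to be pairwise parallel. Hence $B=\omega\otimes\xi$ pointwise, with $\omega$ a unit self-dual form and $\xi\in\mathfrak{g}_P$ of constant length; this is the rank-one statement.

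Parallelism of $B$ should then make both the line spanned by $\xi$ and the line spanned by $\omega$ parallel. Concretely, $B^\ast B$ and $BB^\ast$ are parallel endomorphisms with one-dimensional images, so their unit eigenvectors are parallel up to sign. Passing to a double cover if necessary, or working with the parallel rank-one projectors, we obtain:
\begin{itemize}
\item a parallel section $\xi$ of $\mathfrak{g}_P$ with $d_A\xi=0$ and $\xi\ne 0$, so $A$ is reducible;
\item a parallel self-dual $2$-form $\omega$ with $|\omega|=\sqrt2$ after normalization.
\end{itemize}

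Second, $(M,g)$ is Kähler. A parallel self-dual form of constant length $\sqrt2$ defines an orthogonal almost complex structure $J$ with $\nabla J=0$, so $(M,g,\omega)$ is Kähler.

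\emph{Main obstacle.} The sign ambiguity in extracting $\omega$ and $\xi$ globally is the delicate point. Since the tensor $\omega\otimes\xi=B$ is globally defined, the two $\mathbb{Z}_2$ ambiguities coincide. If the real line bundle is nontrivial, I would argue that $\omega$ still gives a Kähler structure on a double cover. I would then note that $\omega\otimes\omega$ is globally parallel, which yields a parallel splitting of $\Lambda^+$. To upgrade this to a global $\omega$, I would try to use the conventional orientation-type argument; failing that, I would state the result up to a double cover.

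Third, nonnegative scalar curvature. On a Kähler surface, $|W^+|^2=R^2/24$, so $2\sqrt6\|W^+\|_{L^2}=\|R\|_{L^2}$. Combining this with the equality hypothesis and the definition of $Y(g)$ as an infimum bounded above by $\int R\,dV/\mathrm{Vol}^{1/2}$ gives
\[\int R\,dV\ \ge\ Y(g)\,\mathrm{Vol}^{1/2}=\|R\|_{L^2}\mathrm{Vol}^{1/2}\ \ge\ \int |R|\,dV\]
by Cauchy--Schwarz. Hence $R=|R|\ge0$, indeed with $R$ constant. In particular $Y(g)\ge 0$ is consistent.

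\emph{Converse.} Suppose $(M,g)$ is Kähler with $R\ge0$ and $A$ is reducible. We need to show that equality holds in \eqref{E0}.

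Reducibility gives a parallel $\xi\in\Omega^0(\mathfrak{g}_P)$ with $d_A\xi=0$. I would first try to show that $B$ has the form $\omega\otimes\xi$ or some multiple $f\,\omega\otimes\xi$, with $f$ forced constant by $d_A^{+,*}B=0$. This would use that the $\mathfrak{g}_P$-part of $B$ commuting with $\xi$ splits off, via the Weitzenböck identity used in Theorem \ref{T1}.

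It then remains to compute $Y(g)$. On a Kähler surface with $R\ge0$, the Bochner argument in Theorem \ref{T1} already gives $Y\le\|R\|_{L^2}=2\sqrt6\|W^+\|$. The reverse inequality needs $g$ to be a Yamabe minimizer with constant $R$. I expect this converse to be the weakest part: it requires showing that the existence of the nonzero parallel $B$ forces $R$ constant and $g$ Yamabe-minimizing. I would try to extract this from the equality analysis of the modified Yamabe functional used in the proof of Theorem \ref{T1}.
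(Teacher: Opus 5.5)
Your forward direction follows the paper's argument. Theorem~\ref{T1}(3) gives $[B_\bullet B]=0$ and $\nabla_AB=0$, and the bracket condition forces rank one. Writing $B=\xi\otimes\omega$ then yields a parallel self-dual form (K\"ahler) and a parallel section of $\mathfrak{g}_P$ (reducible). Your projectors $B^\ast B$ and $BB^\ast$ replace the paper's pairing of (\ref{E13}) with $\xi$.

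Two of your additions improve on the paper. First, the paper never proves the clause $R\ge 0$. Your Cauchy--Schwarz step correctly shows that $R$ is a nonnegative constant: on a K\"ahler surface $2\sqrt6\|W^+\|_{L^2}=\|R\|_{L^2}$, while $Y\,\mathrm{Vol}^{1/2}\le\int R$. Second, the paper simply asserts a global unit $\xi$, although only the line it spans is canonical. Your fallback ``up to a double cover'' is genuinely needed. For example, take $(S^2\times S^2)/\mathbb{Z}_2$, the quotient by the product of antipodal maps, with the round product metric. Put on it a flat $SO(3)$-bundle of holonomy $\mathrm{diag}(-1,-1,1)$, and take $\xi$ in the $(-1)$-eigenspace. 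Then $\omega\otimes\xi$ descends and solves the equations. Equality holds, since the metric is Einstein and hence a Yamabe minimizer. But the deck map sends $\omega\mapsto-\omega$, and $b^+(S^2\times S^2)=1$, so the quotient has no parallel self-dual form and is not K\"ahler for its orientation.

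There is also a conformal caveat. The equality is conformally invariant, but being K\"ahler is not; consider $u^2g_{FS}$ on $\mathbb{CP}^2$ with $u$ nonconstant. So the metric you show to be K\"ahler is the representative in which $B$ is parallel, namely the $\tilde g$ of the proof of Theorem~\ref{T1}, not necessarily $g$.

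The genuine gap is the converse, which you leave open. The paper disposes of it only by citing Gursky for the claim that every K\"ahler metric with $R\ge0$ satisfies $Y(g)=2\sqrt6\|W^+_g\|_{L^2}$. Your own Cauchy--Schwarz step shows this cannot hold in that generality, because for K\"ahler $g$ equality forces $R$ to be constant.

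Here is a concrete counterexample. On $S^2\times S^2$ take $g=g_1+g_2$, with $g_1$ of positive but nonconstant Gauss curvature. Use the trivial $SU(2)$-bundle with the product connection, and set $B=\omega\otimes\xi$ for a fixed $0\neq\xi\in\mathfrak{su}(2)$. This solves the equations with $B\not\equiv0$, $A$ is reducible, and $g$ is K\"ahler with $R>0$. Yet $Y(g)\le\int R\,dV/\mathrm{Vol}^{1/2}<\|R\|_{L^2}=2\sqrt6\|W^+_g\|_{L^2}$.

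So your plan of extracting constancy of $R$ and Yamabe minimality from the parallel $B$ cannot succeed, since such a $B$ exists over every K\"ahler metric. The ``if'' direction needs an added hypothesis, for instance that $g$ has constant scalar curvature and realizes $Y(g)$. In that case $Y(g)=R\,\mathrm{Vol}^{1/2}=\|R\|_{L^2}=2\sqrt6\|W^+_g\|_{L^2}$ is immediate.
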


As a direct consequence of this rigidity result, we obtain a strict inequality for irreducible connections.

\begin{corollary}
Under the same assumptions as in Theorem \ref{T2}, if in addition $A$ is irreducible, then
	\[Y(g)<2\sqrt{6}\|W^{+}_{g}\|_{L^{2}}.\]
\end{corollary}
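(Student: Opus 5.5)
The corollary should follow from Theorem \ref{T1} and Theorem \ref{T2} by contradiction. Part (1) of Theorem \ref{T1} already gives $Y(g)\le 2\sqrt{6}\|W^{+}_{g}\|_{L^{2}}$ for any solution $(A,B)$ with $B\not\equiv0$. It therefore suffices to rule out equality when $A$ is irreducible.

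Suppose equality $Y(g)=2\sqrt{6}\|W^{+}_{g}\|_{L^{2}}$ holds. The hypotheses of Theorem \ref{T2} are satisfied: $(M,g)$ is closed, $B\not\equiv0$, and the structure group is $SU(2)$ or $SO(3)$. Its ``only if'' direction then says that $A$ is reducible, contradicting the irreducibility assumption. Hence the inequality is strict.

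For a more self-contained argument that avoids the full strength of Theorem \ref{T2}, one could use part (3) of Theorem \ref{T1} instead. Equality forces $\nabla_A B=0$ and $[B_\bullet B]=0$ with $B\not\equiv0$. A nonzero $\nabla_A$-parallel section of $\Lambda^{+}\otimes\mathfrak{g}_P$ with $[B_\bullet B]=0$ has, at each point, image in $\mathfrak{g}_P$ lying in a one-dimensional (abelian) subalgebra when $\mathfrak{g}=\mathfrak{su}(2)$. The unit-normalized image direction $\xi$ is then a well-defined section of $\mathfrak{g}_P$ with $d_A\xi=0$. Indeed, $B\ne0$ everywhere because a parallel section has constant norm. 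This gives a nonzero element of $\ker d_A$ on $\Omega^0(M,\mathfrak{g}_P)$, so $A$ is reducible.

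The only point needing care in this alternative is the algebraic claim that $[B_\bullet B]=0$ forces $B=\omega\otimes\xi$ pointwise. This follows from the identification $[B_\bullet B]\leftrightarrow$ the antisymmetric part of $B$ viewed as a map $\mathbb{R}^3\to\mathfrak{su}(2)\cong\mathbb{R}^3$, which vanishes exactly when $B$ has rank at most one. Since the corollary is a direct consequence of Theorem \ref{T2}, I would simply cite it.
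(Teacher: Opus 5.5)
Your main argument is the paper's own: the corollary is stated there as an immediate consequence of the ``only if'' half of Theorem~\ref{T2}. That half, and your self-contained version of it, both rest on the step you state as ``the unit-normalized image direction $\xi$ is then a well-defined section of $\mathfrak{g}_P$.'' This step fails.

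The factorization $B=\xi\otimes\omega$ with $|\xi|=1$ fixes $\xi$ and $\omega$ only up to a common sign. So $\xi$ is only a local parallel unit section of a flat real line subbundle $\ell\subset\mathfrak{g}_P$. Since $B$ trivializes $\ell\otimes\mathbb{R}\omega$, we have $\ell\cong\mathbb{R}\omega\subset\Lambda^{2,+}$. If $\ell$ has holonomy $-1$ around some loop, no global sign choice exists and the argument produces no element of $\ker d_A$. For $SU(2)$ the holonomy can then lie in the normalizer of a maximal torus, whose centralizer is just $\{\pm1\}$.

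A smaller slip: $[B_\bullet B]$ corresponds to the $2\times2$ minors of $B$ viewed as a $3\times3$ matrix, not to its antisymmetric part. The rank-$\le1$ conclusion is still correct.

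In fact the corollary is false as stated. Let $T^4=\mathbb{R}^4/\mathbb{Z}^4$ be flat, $\sigma(x)=(x_1+\tfrac12,x_2,-x_3,-x_4)$, and $M=T^4/\langle\sigma\rangle$. Then $\sigma$ is a free, orientation-preserving isometric involution, so $M$ is closed, oriented and flat, and $Y(g)=0=2\sqrt6\|W^+_g\|_{L^2}$.

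Here $\pi_1(M)$ is generated by the translations $t_1,\dots,t_4$ and a lift $\hat\sigma$ with $\hat\sigma^2=t_1$ and $\hat\sigma t_v\hat\sigma^{-1}=t_{Lv}$, where $L=\mathrm{diag}(1,1,-1,-1)$. View $SU(2)$ as the unit quaternions and set $\rho(t_1)=-1$, $\rho(t_2)=\rho(t_4)=1$, $\rho(t_3)=e^{\mathbf{i}\theta}$ with $\theta\notin\pi\mathbb{Z}$, and $\rho(\hat\sigma)=\mathbf{j}$. These satisfy the relations, and the image has centralizer $\{\pm1\}$, so the associated flat $SU(2)$-connection $A$ is irreducible.

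With $e^a=dx_a$, the constant form $\mathbf{i}\otimes(e^1\wedge e^3+e^4\wedge e^2)$ on $\mathbb{R}^4$ is equivariant: $\hat\sigma$ negates both factors and the translations fix both. It therefore descends to a parallel $B\not\equiv0$ with $[B_\bullet B]=0$. Hence $(A,B)$ solves the Vafa--Witten equations with $A$ irreducible, and equality holds. $M$ is even K\"ahler via $e^1\wedge e^2+e^3\wedge e^4$, so what fails is exactly the reducibility conclusion of Theorem~\ref{T2}.

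Your argument is valid when $\ell$ is trivial, for example if $H^1(M;\mathbb{Z}/2)=0$. For $SO(3)$ the conclusion survives under the stabilizer definition of irreducibility, because rotation by $\pi$ about $\pm\xi$ is a well-defined nontrivial element of $\Gamma_A$. It does not survive under the paper's $\ker d_A=0$ characterization, since $\ker d_A$ can still vanish in that case.
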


An important topic in the study of Einstein manifolds concerns the admissible values of the Einstein constants and the related question 
of the rigidity of Einstein structures.  Given a (non-Ricci-flat) Einstein metric $g$, one may normalize it such that $Ric(g)=\pm(n-1)g$. 
The natural question then arises: what are the possible values of the volume $vol(g)$? For the case of four-dimensional Einstein manifolds with positive scalar curvature, Gursky provided a complete classification in \cite{Gur00}. 
The following theorem constitutes an application of the estimate (\ref{E10}) to the setting of positive Einstein manifolds.

\begin{theorem}\label{T3}
Let $(M,g)$ be a closed, four-dimensional Einstein manifold of positive scalar curvature, normalized so that $Ric(g)=3g$. Suppose $(A,B)$ is a solution of Vafa-Witten equations on a principal $SU(2)$ or $SO(3)$-bundle $P\rightarrow M$ with $A$ is irreducible and $B\not\equiv0$. 
Then $(M,g)$ is not K\"{a}hler and satisfies \[\frac{2}{9}\pi^{2}\bigg{(}2\chi(M)+3\sigma(M)\bigg{)}>vol(g). \]
\end{theorem}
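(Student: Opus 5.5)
Since the structure group is $SU(2)$ or $SO(3)$ and $A$ is irreducible with $B\not\equiv0$, I first apply the contrapositive of Theorem \ref{T2}. Equality $Y(g)=2\sqrt{6}\|W^{+}_{g}\|_{L^{2}}$ holds exactly when $(M,g)$ is K\"ahler with nonnegative scalar curvature and $A$ is reducible. Here $A$ is irreducible, so equality fails, and Theorem \ref{T1}(1) then gives the strict inequality $Y(g)<2\sqrt{6}\|W^{+}_{g}\|_{L^{2}}$.

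For the non-K\"ahler claim, suppose $(M,g)$ were K\"ahler. Then $R_g=12>0$, so it is K\"ahler with nonnegative scalar curvature. The natural route is the "if'' direction of Theorem \ref{T2}, but that requires reducibility of $A$. So I instead examine its proof: on a K\"ahler surface one has $|W^{+}|^2=R^2/24$. For Einstein $g$ with $R=12$, this gives $|W^+|^2=6$ and $\|W^+\|^2_{L^2}=6\,vol(g)$. Since $g$ is Einstein, it is a Yamabe metric (Obata), so $Y(g)=R\,vol^{1/2}=12\,vol^{1/2}$. Hence
\[
2\sqrt6\|W^+\|_{L^2}=2\sqrt6\cdot\sqrt6\, vol^{1/2}=12\,vol^{1/2}=Y(g),
\]
which is exactly the equality case. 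This contradicts the strict inequality of the first paragraph, so $(M,g)$ is not K\"ahler. This step only uses the pointwise identity for K\"ahler $W^+$, i.e. $W^+$ has eigenvalues $(R/6,-R/12,-R/12)$ and norm squared $R^2/24$ in the paper's normalization, which I would verify against the constant $2\sqrt6$.

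For the volume bound, I use the Gauss--Bonnet--Chern and signature formulas. For Einstein $g$ the traceless Ricci tensor vanishes, so
\[
8\pi^2(2\chi+3\sigma)=\int_M\Big(4|W^+|^2+\frac{R^2}{6}\Big)\,d\mathrm{vol}_g
\]
in the normalization consistent with (\ref{E10}): indeed (\ref{E10}) is equivalent to $\int R^2/24\le\int|W^+|^2$, which is (1) squared when $R$ is constant. Thus $8\pi^2(2\chi+3\sigma)=4\|W^+\|^2+24\,vol$. From the strict inequality $144\,vol=Y^2<24\|W^+\|^2$ we get $\|W^+\|^2>6\,vol$, so
\[
8\pi^2(2\chi+3\sigma)>24\,vol+24\,vol=48\,vol,
\]
i.e. $\frac{1}{6}\pi^2(2\chi+3\sigma)>vol$.

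This gives $\tfrac16$, not the stated $\tfrac29$. Since $\frac29>\frac16$, the claimed bound $\frac29\pi^2(2\chi+3\sigma)>vol$ follows a fortiori. The main obstacle is getting the normalization constants right (the curvature decomposition, and $Y$ computed via total scalar curvature), and checking that Obata's theorem does give $Y(g)=R\,vol^{1/2}$ in the paper's convention for $Y$.
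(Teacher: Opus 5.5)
Your proof that $(M,g)$ is not K\"ahler is correct, and it takes a different route from the paper. The paper assumes K\"ahler and applies Proposition \ref{P5}: the Weitzenb\"ock identity kills $\beta$ when $R_g>0$, and irreducibility of $A$ kills $\gamma$, so $B=0$. You instead show directly that a K\"ahler--Einstein metric with $R=12$ attains $Y(g)=2\sqrt6\|W^+\|_{L^2}$, using $|W^+|^2=R^2/24$ and Obata's $Y(g)=12\,\mathrm{vol}^{1/2}$. The ``only if'' half of Theorem \ref{T2} forbids this equality when $A$ is irreducible. Your derivation of $\|W^+\|^2_{L^2}>6\,\mathrm{vol}(g)$ from Theorem \ref{T2} plus Obata is also sound. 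It makes the strictness more transparent than the paper, which simply asserts that equality in \eqref{E9} is equivalent to equality in \eqref{E0}.

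The volume bound, however, contains an error: your Gauss--Bonnet--signature identity has the wrong coefficient on $R^2$. For an Einstein metric the correct identity, four times \eqref{E14}, is
\[
8\pi^2\big(2\chi(M)+3\sigma(M)\big)=\int_M\Big(4|W^+|^2+\frac{R^2}{12}\Big)\,d\mathrm{vol}_g ,
\]
not $R^2/6$.

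\begin{itemize}
\item On the round $S^4$ with $R=12$ and $\mathrm{vol}=8\pi^2/3$, the left side is $32\pi^2$, while your formula gives $64\pi^2$.
\item Your own consistency check fails with the formula you wrote. With $R^2/6$, inequality \eqref{E10} becomes $\int|W^+|^2\ge\frac{1}{12}\int R^2$, not $\frac{1}{24}\int R^2$.
\end{itemize}

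Consequently $\frac16\pi^2(2\chi+3\sigma)>\mathrm{vol}(g)$ is never established. With the correct identity it would require $\|W^+\|^2_{L^2}>9\,\mathrm{vol}(g)$, which does not follow from $\|W^+\|^2_{L^2}>6\,\mathrm{vol}(g)$. Deducing the stated bound ``a fortiori'' from an unproved stronger one is not a proof.

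Getting a better constant than the paper from the same ingredients should have been a warning sign, because $\frac29$ is sharp for this argument. Take $\mathbb{CP}^2$ with $\mathrm{Ric}=3g$: there $\mathrm{vol}=2\pi^2$ and $2\chi+3\sigma=9$. It satisfies $\|W^+\|^2_{L^2}=6\,\mathrm{vol}$ and $\frac29\pi^2(2\chi+3\sigma)=\mathrm{vol}$ exactly, whereas the $\frac16$ version gives $\frac32\pi^2<2\pi^2$.

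The fix is immediate. With the correct coefficient,
\[
8\pi^2(2\chi+3\sigma)=4\|W^+\|^2_{L^2}+12\,\mathrm{vol}(g)>36\,\mathrm{vol}(g),
\]
which is precisely the claimed $\frac29\pi^2(2\chi+3\sigma)>\mathrm{vol}(g)$.
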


Beyond the four-dimensional setting, the Vafa--Witten equations also exhibit intimate ties with gauge-theoretic equations on three-manifolds. Following the dimensional reduction approach (cf. \cite{Hua22,Mar10,Wit12}), we consider the product manifold \(M = S^1 \times N\) equipped with the product metric \(g_{M}= dt^2 + g_N\). Under this reduction, a solution of the Vafa--Witten equations on \(M\) that is invariant under the \(S^1\)-action corresponds precisely to a stable flat connection on the three-manifold \(N\). Recall that a pair \((A, \phi)\) on a principal \(G\)-bundle over \(N\) is called a \emph{stable flat connection} if it satisfies the elliptic system
\begin{equation}\label{E11}
\left\{
\begin{split}
&F_{A}-\phi\wedge\phi=0,\\
&d_{A}\phi=d_{A}^{\ast}\phi=0,\\
\end{split}
\right.
\end{equation}
where \(A\) is a connection on \(P \to N\) and \(\phi\) is a \(\mathfrak{g}_P\)-valued \(1\)-form. This system, introduced in \cite{Cor88, GU12}, generalizes the notion of flat connections and shares invariance properties under both real and complex gauge transformations. On a Riemann surface, it reduces to Hitchin's self-duality equations \cite{Hit87}. Via the correspondence established in Proposition \ref{P6}, the study of stable flat connections on three-manifolds becomes a special case of the Vafa--Witten theory on four-manifolds with a circular symmetry.

Exploiting this correspondence, we apply the conformal geometric estimates developed in the previous sections to derive constraints on the Yamabe constant of the product manifold \(S^1 \times N\) in terms of the Ricci curvature of \(g_N\). In particular, we obtain the following inequality:
\begin{theorem}\label{T5}
Let $(N,g_{N})$ be a closed, smooth, $3$-manifold and let $\mathcal{A}=A+\mathrm{i}\phi$ be a stable flat connection with $\phi\not\equiv0$. Let $M:=S^{1}\times N$ with product metric $g_{M}=dt^{2}+g_{N}$. Then the Yamabe constant $Y(g_{M})$ of $(M,g_{M})$ satisfies
	\begin{equation*}
	Y(g_{M})\leq2\sqrt{6\pi}\bigg{(}\int_{N}\left|\mathrm{Ric}(g_{N})-\frac{1}{3}R_{g_{N}}g_{N}\right|^{2}dvol_{g_{N}}\bigg{)}^{\frac{1}{2}}.
	\end{equation*}
In particular, if $g_{N}$ is Einstein, then
$$Y(g_{M})\leq 0.$$
\end{theorem}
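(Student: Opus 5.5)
The plan is to reduce Theorem \ref{T5} to Theorem \ref{T1}(1) via the dimensional reduction correspondence, and then compute $\|W^+_{g_M}\|_{L^2}$ for the product metric.

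\emph{Step 1: the Vafa--Witten solution.} From the stable flat connection $(A,\phi)$ on $N$, build an $S^1$-invariant solution $(\tilde A,B)$ of the Vafa--Witten equations (\ref{E8}) on $M=S^1\times N$ as in Proposition \ref{P6}. Pull back $A$ to $\tilde A$ with no $dt$-component. Take $B$ to be a constant multiple of the self-dual part of $dt\wedge\phi$, namely $B=c\,(dt\wedge\phi+\ast_N\phi)$. The equation $F^+_{\tilde A}+\frac18[B_\bullet B]=0$ should reduce to $F_A-\phi\wedge\phi=0$ for a suitable normalization of $c$. The equation $d^{+,\ast}_{\tilde A}B=0$ should reduce to $d_A\phi=d_A^\ast\phi=0$. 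Since $\phi\not\equiv0$, we get $B\not\equiv0$, so Theorem \ref{T1}(1) applies and gives
\[
Y(g_M)\le 2\sqrt6\,\|W^+_{g_M}\|_{L^2(M)}.
\]

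\emph{Step 2: the Weyl tensor of the product.} For $dt^2+g_N$ the curvature has no $dt$-directions. The scalar curvature is $R=R_{g_N}$, and the Weyl tensor is determined by the traceless Ricci tensor $E=\mathrm{Ric}(g_N)-\frac13R_{g_N}g_N$ of $N$, because in dimension three the Riemann tensor is determined by Ricci.

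Use the standard basis $\omega_i=dt\wedge e^i+\ast_N e^i$ of $\Lambda^+$. Computing the curvature operator restricted to $\Lambda^+$, I expect $W^+$ to act as $\omega_i\mapsto -\frac12E_{ij}\,\omega_j$, up to sign and normalization; the trace part cancels against the $R/12$ term. Hence $|W^+|^2$ is a fixed multiple of $|E|^2$, and with the paper's norm convention (the one making Theorem \ref{T1}(2) correct) I expect $|W^+|^2=\frac14|E|^2$.

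Integrating over $S^1$ of length $2\pi$ then gives
\[
\|W^+\|_{L^2}^2=\frac{\pi}{2}\int_N|E|^2 .
\]
The claimed constant requires $2\sqrt6\,\|W^+\|=2\sqrt{6\pi}\,\|E\|$, i.e.\ $\|W^+\|^2=\pi\|E\|^2$, which is a factor of two off. So the normalization must be checked against the conventions used in the proof of Theorem \ref{T1}. Either $|W^+|^2=\frac12|E|^2$ in that convention, or the circle has length $2\pi$ with the norm counting $\omega_i$ of length $\sqrt2$.

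\emph{Step 3: the Einstein case.} If $g_N$ is Einstein then $E=0$, so $W^+=0$ and $Y(g_M)\le0$.

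\emph{Main obstacle.} The only real difficulty is the curvature bookkeeping in Step 2: fixing the constant relating $|W^+|^2$ to $|E|^2$ consistently with the paper's conventions for $|W^+|$ (as used in the inequality $Y\le2\sqrt6\|W^+\|$) and for the length of $S^1$. I would verify it on a test case, for instance $N=S^2\times S^1$, to pin down the factor.
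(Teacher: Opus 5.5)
Your route is the paper's: use Proposition \ref{P6} to get a nontrivial $S^1$-invariant solution, apply Theorem \ref{T1}(1), and compute $W^+$ of the product. Your $B$ agrees with (\ref{E16}), since $\ast_M\ast_N\phi=dt\wedge\phi$. Your curvature computation is also right: $W^+=-\frac12E$ on $\Lambda^+$, so $|W^+|^2=\frac14|E|^2$. This uses the Hilbert--Schmidt norm on $\Lambda^+$, which is the convention built into (\ref{E18}) and in which a K\"ahler surface has $|W^+_g|=|R_g|/(2\sqrt6)$. It is the same pointwise identity the paper derives in its appendix.

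Where you go wrong is the ``main obstacle''. The factor of two is not a discrepancy to be explained away, because it points in the favorable direction. With $\mathrm{length}(S^1)=2\pi$ you have $\|W^+_{g_M}\|_{L^2}^2=\frac{\pi}{2}\int_N|E|^2$, and therefore
\[
Y(g_M)\le 2\sqrt6\Big(\frac{\pi}{2}\Big)^{1/2}\|E\|_{L^2(N)}=2\sqrt{3\pi}\,\|E\|_{L^2(N)}\le 2\sqrt{6\pi}\,\|E\|_{L^2(N)}.
\]
This is the stated inequality, with room to spare, so your argument is already complete once you observe this.

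The constant $2\sqrt{6\pi}$ comes from the paper's integration step. There it writes $\int_M|W^+|^2=\frac12\mathrm{Vol}(S^1)\int_N|E|^2$ immediately after the pointwise identity with $\frac14$. That $\frac12$ is a slip; the correct factor is $\frac14\mathrm{Vol}(S^1)$. So the theorem is true but not sharp.

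Do not try to rescue the constant by positing $|W^+|^2=\frac12|E|^2$ in some other convention; that is false in the convention Theorem \ref{T1} uses. Your own proposed test case confirms $\frac14$. For $N=S^2\times S^1$ with unit round $S^2$, $E=\mathrm{diag}(\frac13,\frac13,-\frac23)$ and $|E|^2=\frac23$. Then $M=S^2\times T^2$ is K\"ahler with $R=2$, and $W^+$ has eigenvalues $\frac13,-\frac16,-\frac16$. Hence $|W^+|^2=\frac16=\frac14|E|^2$.

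One more point: for a circle of length $L$ the bound is really $\sqrt{6L}\,\|E\|_{L^2(N)}$. The statement therefore tacitly assumes $L=2\pi$; it holds for any $L\le4\pi$.
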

Beyond the geometric and topological constraints discussed above, the existence of a nontrivial solution to the Vafa--Witten equations also implies a quantitative analytic gap, which refines the compactness theory for these equations. Our analysis in this direction is inspired by Feehan's work \cite{Fee16} on the gap phenomenon for Yang–Mills connections, and we extend his ideas to the Vafa--Witten setting.

Recall that the compactification of the ASD moduli space often involves concentration phenomena (cf.\cite{DK90,Fee16,Sed82}). In contrast, for convergent sequences of Vafa–Witten solutions, we study the least eigenvalue $\mu(A)$ of the operator $d^{+}_{A}d_{A}^{+,\ast}$ on self-dual $\mathfrak{g}_{P}$-valued two-forms. Our main result in this part shows that if every ASD connection $A_{asd}$ in the compactified moduli space is regular (i.e. $\mu(A_{asd})>0$),  then every Vafa--Witten solution must satisfies either $F_{A}^{+}=0$ or $\|F_{A}^{+}\|_{L^{2}(M)}\geq\varepsilon$ for some positive constant $\varepsilon$. Consequently, a non-ASD solution cannot be arbitrarily close to the ASD locus. For simplicity, denote by $\overline{\mathcal{M}}_{ASD}(P, g)$ the Uhlenbeck compactification of the moduli space of ASD connections.

\begin{theorem}\label{T8}
	Let $(M,g)$ be a closed, smooth, four-dimensional manifold. Suppose that all connections in $\overline{\mathcal{M}}_{ASD}(P, g)$ are \textit{regular}.
	 Then there is a positive constant $\varepsilon=\varepsilon(g,P)$ such that  any smooth solution $(A,B)$ of Vafa--Witten equations satisfies
	$$\|F_{A}^{+}\|_{L^{2}(M)}\geq \varepsilon(g,P)$$
	unless $F_{A}^{+}=0$. 
\end{theorem}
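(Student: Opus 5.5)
We argue by contradiction, in the spirit of Feehan's gap theorem for Yang--Mills connections. Suppose there is a sequence of smooth Vafa--Witten solutions $(A_i,B_i)$ on $P$ with $F_{A_i}^{+}\not\equiv0$ and $\|F_{A_i}^{+}\|_{L^{2}}\to0$. The Chern--Weil identity $\|F_{A}^{-}\|^{2}-\|F_{A}^{+}\|^{2}=$ const gives a uniform $L^{2}$ bound on $F_{A_i}$. Uhlenbeck compactness then yields, after passing to a subsequence and applying gauge transformations, weak convergence away from finitely many points $\{x_1,\dots,x_k\}$ to a connection $A_\infty$ with $F_{A_\infty}^{+}=0$. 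By removable singularities, $A_\infty$ extends over the bubble points. Thus $[A_\infty]$, together with the bubble points, defines an ideal point of $\overline{\mathcal{M}}_{ASD}(P,g)$, and by hypothesis $\mu(A_\infty)>0$.

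Next we transfer regularity to $A_i$ for large $i$. Following Feehan, the claim is that $\mu(A_i)\ge\frac12\mu(A_\infty)>0$ for $i$ large. The point is that the least eigenvalue of $d_A^{+}d_A^{+,\ast}$ is lower semicontinuous under Uhlenbeck convergence. Near the bubble points we use cut-off functions and the fact that small balls carry arbitrarily small Sobolev constant. Away from them we use $C^\infty_{loc}$ convergence and the Weitzenböck formula
\[
d_A^{+}d_A^{+,\ast}=\tfrac12\nabla_A^\ast\nabla_A+\tfrac{R}{6}-2W^{+}+\{F_A^{+},\cdot\},
\]
in which $\|F_{A_i}^{+}\|_{L^2}\to0$ controls the curvature term via $L^4$ Sobolev estimates. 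This step is the main obstacle: one must show that eigenfunctions cannot concentrate at the bubbles. Feehan handles this with a delicate $L^{2}$--$L^{\sharp}$ estimate, and I would import that argument essentially verbatim. It only uses the connection and the smallness of $F^+$, not the Vafa--Witten structure.

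With $\mu(A_i)\ge\mu_0>0$ in hand, we use the equations to conclude. Since $d_{A_i}^{+,\ast}B_i=0$, the Bochner-type identity applied to $B_i$ gives
\[
\mu_0\|B_i\|_{L^2}^{2}\le\|d_{A_i}^{+,\ast}B_i\|_{L^2}^{2}=0
\]
in the form $\langle d^+_{A}d^{+,*}_AB,B\rangle=\|d_A^{+,*}B\|^2$. Hence $B_i\equiv0$. The second equation then gives $F_{A_i}^{+}=-\tfrac18[B_i\bullet B_i]=0$, contradicting $F_{A_i}^{+}\not\equiv0$. This produces $\varepsilon(g,P)>0$.

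If the spectral-bound step proves awkward, a fallback is to run the contradiction directly on the normalized fields $\tilde B_i=B_i/\|B_i\|_{L^2}$. These lie in $\ker d_{A_i}^{+,\ast}$, and we can try to show that they converge, modulo bubbling, to a nonzero element of $\ker d_{A_\infty}^{+,\ast}$, contradicting regularity. Non-concentration of $\tilde B_i$ would follow from the Weitzenböck inequality $\Delta|B|\lesssim(|W^+|+|R|+|B|^2)|B|$ together with the smallness $\|[B_i\bullet B_i]\|_{L^2}=8\|F_{A_i}^{+}\|_{L^2}\to0$, which gives $\|B_i\|_{L^4}\to0$.
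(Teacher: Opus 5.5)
Your proposal is correct and is essentially the paper's argument, in fact a blend of its two proofs: you take a sequence with $\|F_{A_i}^+\|_{L^2}\to0$, pass via Sedlacek--Uhlenbeck compactness to an ideal ASD limit, apply Feehan's lower bound on $\mu(A)$ for connections with small $\|F_A^+\|_{L^2}$ (Proposition~\ref{P7} and Theorem~\ref{T6}), and then use $d_{A}^{+,\ast}B=0$ to force $B\equiv0$ and hence $F_A^+=0$. One minor correction: such a sequence only gives weak $L^2_1$ convergence away from the bubble points, not $C^\infty_{\mathrm{loc}}$ convergence, since the $B_i$ may be unbounded; this does not break the proof, because the Feehan argument you import needs only the weaker convergence plus the Kato/Sobolev control of eigenforms.
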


\begin{remark}
It is instructive to compare Theorem \ref{T8} with a known result in the K\"ahler surface setting. Let $M$ be a closed K\"ahler surface and $P\to M$ a principal $SU(2)$-bundle with $c_2(P)=c>0$. Let $g$ be a $c$-generic K\"ahler metric in the sense of \cite[Definition 1.4]{Hua19}, meaning that no reducible ASD connections appear in the Uhlenbeck compactification for any bundle with $c_2\le c$.

Under this $c$-generic assumption, Huang \cite[Theorem 4.18]{Hua19} proved that there exists a constant $C>0$ such that any smooth solution $(A,B)$ of the Vafa--Witten equations on $M$ satisfies either
\[\Lambda_\omega F_A = 0 \quad\text{or}\quad \|\beta\|_{L^2(M)}\ge C,\]
where $\beta$ is the $(2,0)$-part of $B$. This result is analogous in spirit to Theorem \ref{T8}, as both establish a uniform gap away from the ASD locus. 
\end{remark}

\section{Preliminary Inequalities }\label{sec:prelim}

In this section, we establish the preliminary analytical inequalities that are essential for the proof of Theorem \ref{T1}. Our main tools are the Bochner formula, pointwise estimates for the Weyl curvature term, and a sharp Kato inequality for Lie algebra‑valued harmonic two‑forms on four‑manifolds.

We first fix our notational conventions. For sections of $\mathfrak{g}_{P}$ we define the canonical inner product
\begin{equation}\label{E1}
\langle A,B\rangle:=-\frac{1}{2}\mathrm{tr}(AB).
\end{equation}
The inclusion of the factor $\frac{1}{2}$ aligns with the convention established by Bourguignon and Lawson (\cite[(2.14)]{BL81}). This inner product is positive definite. In the statement of the Böchner formula below, all inner products are understood to be defined using the given Riemannian metric in conjunction with the algebraic inner product (\ref{E1}).  We denote by $\De_{g}=dd^{\ast_{g}}+d^{\ast_{g}}d$ the Hodge Laplacian with respect to metric $g$ on $M$.

For local bases one has, for $P,Q\in\Omega^{2,+}(\mathfrak{g}_{P})$, the tensor $[P,Q]\in\Omega^{2,+}(\mathfrak{g}_{P})$ defined via
\begin{equation}\label{E3}
[P,Q]^{\beta}_{ij\alpha}:=g^{kl}\bigl(P^{\beta}_{ik\delta}Q^{\delta}_{jl\alpha}-P^{\delta}_{ik\alpha}Q^{\beta}_{jl\delta}-P^{\beta}_{jk\delta}Q^{\delta}_{il\alpha}+P^{\delta}_{jk\alpha}Q^{\beta}_{il\delta}\bigr),
\end{equation}
and the Weyl curvature action is given by
\begin{equation*}
(W^{+}_{g}\star\omega)^{\beta}_{ij\alpha}:=g^{kp}g^{lq}W^{+}_{ijkl}\omega_{pq\alpha}^{\beta}.
\end{equation*}
A fundamental tool in our analysis is the following Bochner formula for harmonic two‑forms with values in the adjoint bundle.
\begin{lemma}(\cite[Theorem 3.10]{BL81})\label{L1}
Let $(M,g)$ be a Riemannian manifold and let $P\rightarrow M$ be a principal $G$-bundle with connection $A$.  
If $\w\in\Om^{2,+}(\mathfrak{g}_{P})$ is a harmonic two-form, then 
\begin{equation}\label{E2}
-\frac{1}{2}\De_{g}|\w|^{2}=|\na_{A}\w|^{2}+\frac{1}{3}R_{g}|\w|^{2}-2\langle\w,W_{g}^{+}\star \w\rangle-\langle\w,[F^{+}_{A},\w]\rangle.
\end{equation}
\end{lemma}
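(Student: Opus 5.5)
The plan is to derive \eqref{E2} from the Weitzenböck formula for the covariant Hodge Laplacian $\Delta_A=d_Ad_A^{\ast}+d_A^{\ast}d_A$ acting on $\Omega^{2}(\mathfrak{g}_P)$, restricted to the self-dual summand. First I note what ``harmonic'' buys us. For $\omega\in\Omega^{2,+}(\mathfrak{g}_P)$ we have $\ast\omega=\omega$ and $d_A^{\ast}=-\ast d_A\ast$ on $2$-forms in dimension four. Hence $d_A\omega=0$ and $d_A^{\ast}\omega=0$ are equivalent, and harmonicity is simply $\Delta_A\omega=0$.

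The main step is the Weitzenböck identity of Bourguignon--Lawson on $\mathfrak{g}_P$-valued $2$-forms:
\[
\Delta_A\omega=\nabla_A^{\ast}\nabla_A\omega+\mathfrak{R}(\omega)+\mathfrak{F}_A(\omega).
\]
Here $\mathfrak{R}$ is the Riemannian curvature term of the ordinary Weitzenböck formula on $\Lambda^2$, tensored with the identity on $\mathfrak{g}_P$. The term $\mathfrak{F}_A$ is the zeroth-order term coming from $F_A$, given in local frames by contracting $F_A$ against $\omega$ with the Lie bracket. I would derive it in normal coordinates at a point: write $d_A$ and $d_A^{\ast}$ through $\nabla_A$, commute covariant derivatives, and collect the commutators. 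These are $[\nabla_i,\nabla_j]=R_{ij}+[F_{ij},\cdot\,]$, so each splits into a Riemannian part and a gauge part.

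The algebraic step is to identify these two zeroth-order terms on $\Lambda^+$. For the Riemannian part, the curvature operator acts on $\Lambda^+$ in dimension four by $W^+ + \frac{R}{12}\mathrm{Id}$. The standard computation then gives $\mathfrak{R}|_{\Lambda^+}=\frac{R}{3}-2W^+$, which with the index convention of $W^+_g\star$ reads $\frac{R_g}{3}\omega-2W^+_g\star\omega$. The factor $2$ here is tied to the $\frac12$ in \eqref{E1} and the normalization of $\star$, and I will have to check it. For the gauge part, I split $F_A=F_A^++F_A^-$. The contraction $g^{kl}F_{ik}\omega_{jl}$ antisymmetrized in $i,j$ is, up to sign, the commutator of the two forms viewed as skew endomorphisms of $TM$. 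Since $\Lambda^+$ and $\Lambda^-$ commute as subalgebras of $\mathfrak{so}(4)$, the $F_A^-$ contribution drops out. The $F_A^+$ contribution is exactly the expression $[F_A^+,\omega]$ of \eqref{E3}, again with its sign fixed by the convention.

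Finally I pass to the scalar identity. Pair the Weitzenböck formula with $\omega$ and use the pointwise formula
\[
-\tfrac12\Delta_g|\omega|^2=|\nabla_A\omega|^2-\langle\nabla_A^{\ast}\nabla_A\omega,\omega\rangle,
\]
valid for the positive Laplacian and a metric-compatible connection, which holds because $\langle\cdot,\cdot\rangle$ is $\mathrm{ad}$-invariant. Substituting $\nabla_A^{\ast}\nabla_A\omega=-\frac{R_g}{3}\omega+2W^+_g\star\omega+[F_A^+,\omega]$ then yields \eqref{E2}. The main obstacle is purely bookkeeping: keeping the numerical constants and signs in front of $W^+_g$ and $[F_A^+,\omega]$ consistent with \eqref{E1}, \eqref{E3} and the $\star$-convention. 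To control this I would test the identity on two examples: a self-dual harmonic form on flat $T^4$ with trivial bundle, and the round $S^4$, where $W^+=0$ and the $R/3$ term can be checked directly.
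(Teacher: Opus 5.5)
Your proposal is correct and follows the same route as the paper, which gives no proof of its own and simply invokes Bourguignon--Lawson's Weitzenb\"ock formula (BL81, Theorem 3.10) for $\Delta_A$ on $\mathfrak{g}_P$-valued $2$-forms: restrict it to $\Lambda^{2,+}$, where $[\Lambda^{2,+},\Lambda^{2,-}]=0$ removes the $F_A^-$ term and the Riemannian part becomes $\frac{R_g}{3}-2W^+_g$, then pair with $\omega$.

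One caution on the bookkeeping. Your test cases $T^4$ and round $S^4$ detect neither the coefficient of $W^+_g$ nor the sign of the bracket term. (The Weyl coefficient has nothing to do with the $\frac12$ in \eqref{E1}, since the Lie-algebra inner product enters both sides.) A parallel K\"ahler form on $\mathbb{CP}^2$ fixes the Weyl coefficient, because the Weyl endomorphism of $\Lambda^{2,+}$ acts on it by $R_g/6$. It shows the factor $2$ is right only if $W^+_g\star$ means that endomorphism; the literal contraction over $k,l$ is twice it. The sign of the bracket term must come from the direct commutator computation, using the ordering in \eqref{E3}.
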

We now bound the term involving the Weyl curvature. Since the action of $W_{g}^{+}$ on Lie algebra-valued forms is induced naturally from its action on ordinary real-valued self-dual two-forms, the following estimate follows by a straightforward modification of that case.
\begin{lemma}(\cite[Lemma 2.2]{GKS18})\label{L2}
Let $(M,g)$ be a Riemannian four-manifold, and let $P\rightarrow M$ be a principal $G$-bundle with connection $A$. For any $\w\in\Om^{2,+}(\mathfrak{g}_{P})$,
\begin{equation}\label{E4}
|\langle\w,W_{g}^{+}\star \w\rangle|\leq\frac{2}{\sqrt{6}}|W^{+}_{g}||\w|^{2}.
\end{equation}
\end{lemma}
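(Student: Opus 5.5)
The statement to prove is Lemma \ref{L2}: $|\langle\w,W_{g}^{+}\star \w\rangle|\leq\frac{2}{\sqrt{6}}|W^{+}_{g}||\w|^{2}$ for every $\w\in\Om^{2,+}(\mathfrak{g}_{P})$. The plan is to reduce it to a pointwise linear-algebra statement about a trace-free symmetric endomorphism of $\La^{2,+}$, and then to treat the Lie algebra factor as a passive tensor factor.

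\textbf{Step 1 (reduction to one point).} Fix a point $x\in M$. The operator $W^{+}_{g}$ acts on $\La^{2,+}_x\cong\mathbb{R}^3$ as a symmetric endomorphism $\mathcal{W}$, and it is trace-free. Choose an orthonormal eigenbasis $e_1,e_2,e_3$ of $\La^{2,+}_x$, with eigenvalues $\la_1,\la_2,\la_3$ satisfying $\sum\la_i=0$. With the index convention in the definition of $W^{+}_{g}\star\w$, one has $W^{+}_{g}\star e_i=c\,\la_i e_i$ for a universal normalization constant $c$. Likewise $|W^+_g|^2=c'\sum\la_i^2$, where $c'$ comes from the tensor norm on $\La^2\otimes\La^2$ versus the operator norm.

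\textbf{Step 2 (splitting off the Lie algebra factor).} Choose an orthonormal basis $\{\tau_a\}$ of $\mathfrak{g}_{P,x}$ for the inner product (\ref{E1}) and expand $\w=\sum_{i,a}\w_{ia}\,e_i\otimes\tau_a$. Since $W^{+}_{g}$ acts only on the form indices,
\[
\langle\w,W^{+}_{g}\star\w\rangle=c\sum_i\la_i\Big(\sum_a\w_{ia}^2\Big)=c\sum_i\la_i x_i,
\qquad |\w|^2=\sum_i x_i,
\]
where $x_i:=\sum_a\w_{ia}^2\ge0$. This is why the ``straightforward modification'' works: the problem becomes maximizing a linear functional over a simplex, exactly as in the scalar case.

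\textbf{Step 3 (eigenvalue bound).} From Step 2, $|\sum\la_i x_i|\le \max_i|\la_i|\,\sum x_i$. For trace-free $(\la_1,\la_2,\la_3)$ the elementary bound is $\max|\la_i|^2\le\frac{2}{3}\sum\la_i^2$. To see it, if $\la_1$ attains the maximum, then $\la_2+\la_3=-\la_1$ forces $\la_2^2+\la_3^2\ge\la_1^2/2$. Equality holds for eigenvalues proportional to $(2,-1,-1)$. This gives $|\langle\w,W\star\w\rangle|\le c\sqrt{2/3}\,(\sum\la_i^2)^{1/2}|\w|^2$.

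\textbf{Step 4 (constants).} It remains to check that $c\sqrt{2/3}/\sqrt{c'}=2/\sqrt6$, which amounts to $c=\sqrt{c'}$. I expect this normalization bookkeeping to be the main obstacle, since the convention for the norm of $2$-forms (a factor $\tfrac12$ or not in $|\w|^2=\tfrac12\w_{ij}\w^{ij}$) and the definition of $\star$ with full index contraction both produce factors of $2$. My first attempt will test the identity on the model case where $W^+$ is the self-dual Weyl tensor of $\mathbb{CP}^2$ (eigenvalues $\propto(2,-1,-1)$), which saturates the bound, and fix $c,c'$ consistently with the conventions of \cite{GKS18}. This is also the sense in which the lemma is quoted from them: the constant $2/\sqrt6$ is sharp precisely at this configuration.
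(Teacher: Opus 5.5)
Your proof is correct and follows the route the paper intends. The paper itself only cites \cite{GKS18} and remarks that $W^{+}_{g}$ acts on the form factor alone, which reduces the estimate, exactly as in your Steps 1--3, to the bound $\max_i\lambda_i^{2}\le\frac{2}{3}\sum_i\lambda_i^{2}$ for a trace-free symmetric endomorphism of the three-dimensional space $\Lambda^{2,+}$. Step 4 needs no model case such as $\mathbb{CP}^{2}$: take the $\lambda_i$ to be the eigenvalues of $\w\mapsto W^{+}_{g}\star\w$ itself, and for the full contraction in the paper's definition one has $W^{+}_{g}\star(e^{1}\wedge e^{2})=2\sum_{i<j}W^{+}_{ij12}\,e^{i}\wedge e^{j}$, so the Hilbert--Schmidt norm of this operator equals the tensor norm $(W^{+}_{ijkl}W^{+\,ijkl})^{1/2}$; hence $c=\sqrt{c'}$ holds, and the constant $\frac{2}{\sqrt6}$ is right, exactly when $|W^{+}_{g}|$ is the norm matched to the contraction in this way.
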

In four dimensions, harmonic two-forms satisfy an improved Kato inequality.
 Such an inequality was proved for Yang--Mills connections on $\mathbb{R}^{4}$ in \cite{Rad93}. 
 The following result was proved for real-valued harmonic two-forms by Seaman \cite{Sea91} using conformal invariance,
  and was extended to the Lie algebra-valued setting in \cite{GKS18} via an elementary adaptation of Seaman's method.
   For completeness we state the sharp form needed in this article.

\begin{proposition}(\cite[Proposition 2.8]{GKS18})\label{P3}
Let $(M,g)$ be a closed, oriented, smooth four-dimensional Riemannian manifold and let $P\rightarrow M$ be a principal $G$-bundle with connection $A$. If $\w\in\Om^{2,+}(\mathfrak{g}_{P})$ is a harmonic two-form, then 
\begin{equation}\label{E5}
|\na_{A}\w|^{2}\geq\frac{3}{2}|\na|\w||^{2}.
\end{equation}
\end{proposition}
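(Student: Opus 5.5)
The plan is to reduce the claim to a pointwise linear-algebra statement about the first covariant derivative of $\w$, settle that statement for a single real self-dual direction, and then extend it to Lie-algebra values by a singular-value decomposition. Away from the zero set of $\w$ we have $|\na|\w||=|\langle\na_A\w,\w\rangle|/|\w|$. The zero set can be handled in the usual way, by working with $(|\w|^{2}+\epsilon^{2})^{1/2}$ and letting $\epsilon\to0$. Fix a point $x$ and set $\eta=\w/|\w|$, a unit element of $\Lambda^{+}\otimes\mathfrak{g}$. Write $T=\na_A\w(x)\in T^{\ast}\otimes\Lambda^{+}\otimes\mathfrak{g}$. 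It then suffices to prove
\[
\sum_{k}\langle T_{k},\eta\rangle^{2}\leq\frac{2}{3}|T|^{2}
\]
for every $T$ satisfying the pointwise constraint that harmonicity imposes.

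\textbf{Step 1: the linear constraint.} Since $\w$ is self-dual, $d_A^{\ast}\w=-\ast d_A\ast\w=-\ast d_A\w$, so harmonicity is equivalent to $d_A\w=0$. The quantity $d_A\w$ at $x$ is a linear contraction of $T$. Decompose $T^{\ast}\otimes\Lambda^{+}=T^{\ast}\oplus(T^{\ast}\otimes\Lambda^{+})_{0}$, where the first summand has dimension $4$ and the second, the twistor part, has dimension $8$. Then $d_A\w=0$ says exactly that the $T^{\ast}$-component of $T$ vanishes. This projection acts componentwise on $\mathfrak{g}$, so it holds for each Lie-algebra component separately.

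\textbf{Step 2: the real case.} Let $\sigma$ be a unit real self-dual form and $T\in T^{\ast}\otimes\Lambda^{+}$ with vanishing $T^{\ast}$-part. Put $a_k=\langle T_k,\sigma\rangle$. Then $|a|^{2}=\langle T,a\otimes\sigma\rangle$. Because $T$ is orthogonal to the $T^{\ast}$-summand, this equals $\langle T,(a\otimes\sigma)_{0}\rangle$. Next I need the norm of the $T^{\ast}$-component of $a\otimes\sigma$. It is computed from the contraction $a\otimes\sigma\mapsto\iota_{a}\sigma$; since $\sigma$ is, up to scale, an almost-complex structure, $|\iota_a\sigma|$ is a fixed multiple of $|a|$. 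A short check in an orthonormal basis should give $|(a\otimes\sigma)_{T^{\ast}}|^{2}=\frac13|a|^{2}$, hence $|(a\otimes\sigma)_{0}|^{2}=\frac23|a|^{2}$. Cauchy--Schwarz then gives $|a|^{2}\le|T|\sqrt{2/3}\,|a|$, that is, $|a|^{2}\le\frac23|T|^{2}$. This normalization check, together with matching the paper's inner-product conventions (\ref{E1}), is the only genuinely computational point, and I expect it to be the main place where care is needed.

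\textbf{Step 3: Lie-algebra values.} Regard $\eta$ as a linear map between the $3$-dimensional space $\Lambda^{+}$ and $\mathfrak{g}$, and take its singular value decomposition $\eta=\sum_i s_i\,\sigma_i\otimes e_i$. Here the $\sigma_i$ are orthonormal in $\Lambda^{+}$, the $e_i$ are orthonormal in $\mathfrak{g}$, and $\sum s_i^{2}=1$. Writing $T=\sum_\alpha T^{\alpha}\otimes e_\alpha$ with respect to an orthonormal basis extending $\{e_i\}$, we get
\[
\langle T_k,\eta\rangle=\sum_i s_i\langle T^{i}_{k},\sigma_i\rangle.
\]
By Cauchy--Schwarz in $i$, followed by Step 2 applied to each real tensor $T^{i}$ (legitimate by Step 1),
\[
\sum_k\langle T_k,\eta\rangle^{2}\le\sum_i\sum_k\langle T^{i}_k,\sigma_i\rangle^{2}\le\frac23\sum_i|T^{i}|^{2}\le\frac23|T|^{2}.
\]
This yields $|\na|\w||^{2}\le\frac23|\na_A\w|^{2}$, which is (\ref{E5}).
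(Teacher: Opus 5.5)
Your proof is correct, and it takes a different route from the one the paper relies on.

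The paper gives no argument of its own. It quotes \cite[Proposition 2.8]{GKS18}, which it describes as an elementary adaptation to $\mathfrak{g}_P$-valued forms of Seaman's proof \cite{Sea91}, an argument based on the conformal invariance of harmonic two-forms in dimension four.

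You instead prove a purely pointwise algebraic statement. For self-dual $\omega$, harmonicity is exactly the vanishing of the $T^{\ast}$-component of $\nabla_A\omega$ in $T^{\ast}\otimes\Lambda^{+}=T^{\ast}\oplus(T^{\ast}\otimes\Lambda^{+})_0$. The constant is then the share of $a\otimes\sigma$ lying in that summand.

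The normalization check you flagged comes out as you predicted. Take $|e^0\wedge e^1|=1$ and $\pi(X)=\sum_k\iota_{e_k}X_k$. Schur's lemma together with the evaluation $|\pi^{\ast}e^0|^2=\frac32$ gives $\pi\pi^{\ast}=\frac32\,\mathrm{Id}$. For unit $\sigma\in\Lambda^{+}$ one has $|\iota_a\sigma|^2=\frac12|a|^2$, so $|(a\otimes\sigma)_{T^{\ast}}|^2=\frac23|\pi(a\otimes\sigma)|^2=\frac13|a|^2$. The two summands are inequivalent irreducibles, so this ratio does not depend on how the metrics on $\Lambda^{2}$ and $\mathfrak{g}$ are scaled. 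The convention \eqref{E1} therefore matters only through its $\mathrm{Ad}$-invariance, which is what gives $d|\omega|^{2}=2\langle\nabla_A\omega,\omega\rangle$.

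Your singular-value step makes the passage to Lie-algebra values fully explicit and shows it costs nothing in the constant.

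What your route buys:
\begin{enumerate}
\item It is self-contained and uses only the first-order constraint at a single point.
\item It makes the origin of $\frac32$ transparent, and shows the constant is pointwise sharp: for real forms it is attained by $T=(a\otimes\sigma)_0$.
\item It applies verbatim to any section whose covariant derivative has no $T^{\ast}$-component.
\end{enumerate}
The conformal argument cited by the paper fits the conformal theme of the article. It works with the behaviour of harmonic two-forms under changes of metric within the conformal class, rather than with an orthogonal splitting of $T^{\ast}\otimes\Lambda^{+}$.
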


\section{Analysis and Rigidity of the Vafa--Witten Equations}\label{sec:main} 

\subsection{Vafa--Witten Equaions}
Let us now specialize the preceding analytical framework to the setting of the Vafa--Witten equations (cf. \cite{Mar10,Tan17}).  Let $(M,g)$ be a closed, oriented, smooth four-manifold and let $P\to M$ be a principal $G$-bundle with compact Lie group $G$. The Hodge star operator on $2$-forms induces the well-known decomposition
 \[\La^{2}(M)=\La^{2,+}(M)\oplus\La^{2,-}(M),\]
into the $\pm$-eigenspaces of $\ast$. Consequently, the space of $\mathfrak{g}_P$-valued $2$-forms splits accordingly:
\[\Om^{2}(M,\mathfrak{g}_{P})=\Om^{2,+}(M,\mathfrak{g}_{P})\oplus\Om^{2,-}(M,\mathfrak{g}_{P}).\]
This decomposition can be understood via the isomorphism $\mathfrak{so}(4)\cong\mathfrak{so}(3)\oplus\mathfrak{so}(3)$, where $\Lambda^{2}T_{x}^{\ast}M$ is identified with the subalgebra of skew-symmetric endomorphisms of $T_{x}M$. In particular, $\Lambda^{2,+}$ inherits a pointwise Lie bracket, denoted $[\cdot,\cdot]_{\Lambda^{2,+}}$. Combining this with the Lie bracket on $\mathfrak{g}_P$, we define a bilinear map
\[ [\cdot_{\bullet}\cdot]:(\La^{2,+}\otimes\mathfrak{g}_{P})\otimes(\La^{2,+}\otimes\mathfrak{g}_{P})\rightarrow\La^{2,+}\otimes\mathfrak{g}_{P}
\]
by
\begin{equation}\label{E17} \frac{1}{2}[\cdot,\cdot]_{\La^{2,+}}\otimes[\cdot,\cdot]_{\mathfrak{g}_{P}}.
\end{equation}
For a detailed discussion we refer the reader to  \cite[Section 2]{Tan17} and \cite[Appendix A]{Mar10} and for details.

Let $A$ be a connection on $P$ and let $B$ be a section of the associated bundle $\Om^{2,+}\otimes\mathfrak{g}_{P}$. The reduced Vafa–Witten equations are 
\begin{equation*}
\left\{
\begin{split}
&F_{A}^{+}+\frac{1}{8}[B_{\bullet}B]=0\\
& d_{A}^{+,\ast}B=0.
\end{split}
\right.
\end{equation*}  
where $F_{A}^{+}$ denotes the self‑dual part of the curvature of $A$, and $[B_{\bullet}B]$ is defined as in (\ref{E17}).

A fundamental property of this system, which is crucial for our analysis, is its behavior under conformal changes of the metric.
	\begin{lemma}\label{L3}(Conformal Invariance)
		Let $(M,g)$ be a closed, smooth, four-manifold. Suppose $(A,B)$ is a solution of the  Vafa-Witten equations with respect to metric $g$. Let $\tilde{g}=u^{2}g$ be any metric conformal to $g$. Then $(A,B)$ also satisfies the Vafa–Witten equations with respect to $\tilde{g}$.
	\end{lemma}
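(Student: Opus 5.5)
The plan is to check the three ingredients of the reduced system one at a time: the splitting into self-dual and anti-self-dual parts, the first-order equation $d_A^{+,\ast}B=0$, and the quadratic term $[B_{\bullet}B]$. Throughout I use the basic fact that on an oriented four-manifold the Hodge star acting on $2$-forms depends only on the conformal class of the metric. For $\tilde g=u^{2}g$, the pointwise norm on $2$-forms changes by $u^{-4}$ and the volume form by $u^{4}$, so these weights cancel in $\alpha\wedge\ast\beta=\langle\alpha,\beta\rangle\,d\mathrm{vol}$.

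\emph{Step 1 (splitting and curvature).} Since $\tilde\ast=\ast$ on $\Lambda^{2}$, the eigenbundles agree: $\Lambda^{2,\pm}_{\tilde g}=\Lambda^{2,\pm}_{g}$. It follows that $B$ is still a section of $\Lambda^{2,+}\otimes\mathfrak g_P$ for $\tilde g$. Moreover $F_A^{+}=\frac12(F_A+\ast F_A)$ is the same form for both metrics, because the curvature $F_A$ involves no metric.

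\emph{Step 2 (the linear equation).} For a self-dual form $B$ we have $d_A^{\ast}B=-\ast d_A\ast B=-\ast d_A B$, and $d_A^{+,\ast}B$ is this expression (the component $d_A^{\ast}$ restricted to $\Omega^{2,+}$). Hence $d_A^{+,\ast}B=0$ is equivalent to $d_AB=0$. Since $d_A$ is metric independent and $\Lambda^{2,+}$ is conformally invariant by Step 1, this condition holds for $\tilde g$ if and only if it holds for $g$. (On $3$-forms $\tilde\ast=u^{-2}\ast$, but a nonzero conformal factor does not affect vanishing.)

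\emph{Step 3 (the quadratic term; main obstacle).} I have to show that $[B_{\bullet}B]$, defined through the pointwise Lie bracket on $\Lambda^{2,+}$ from $\mathfrak{so}(4)\cong\mathfrak{so}(3)\oplus\mathfrak{so}(3)$, is the same $\mathfrak g_P$-valued self-dual form for $\tilde g$ as for $g$. I would first work in a local $g$-orthonormal frame $e_i$. In this frame, with the standard self-dual basis $\omega_1,\omega_2,\omega_3$ (for instance $\omega_1=e^{12}+e^{34}$), the bracket is $[\omega_a,\omega_b]=2\epsilon_{abc}\omega_c$ up to the normalisation fixed in (\ref{E17}). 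I would then rewrite the bracket in the frame $\tilde e_i=u^{-1}e_i$, which is $\tilde g$-orthonormal and has coframe $\tilde e^{i}=ue^{i}$ and basis $\tilde\omega_a=u^{2}\omega_a$. Expanding $B=\sum B^{a}\omega_a$ and using bilinearity, I would compare the two expressions for $[B_{\bullet}B]$ term by term.

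This step is where the real content lies. The raw bracket of skew endomorphisms raises an index with the inverse metric, so a naive computation produces a factor $u^{-2}$. I therefore expect the main difficulty to be checking that this factor is absorbed by the normalisation built into the definition (\ref{E17}) (following \cite{Tan17,Mar10}). The point to verify is that the identification of $\Lambda^{2,+}$ with $\mathfrak{so}(3)$ is made on the conformally invariant bundle $\Lambda^{2,+}$ itself, with its Lie algebra structure fixed by the conformal structure, rather than depending on the metric representative. I will check this carefully in the explicit frame above and confirm that the structure constants of $\{\omega_a\}$ and $\{\tilde\omega_a\}$ enter the definition in the same way.

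\emph{Conclusion.} Once Step 3 is settled, the equation $F_A^{+}+\frac18[B_{\bullet}B]=0$ reads identically for $g$ and $\tilde g$. Together with Steps 1 and 2, this shows that $(A,B)$ solves the reduced Vafa--Witten equations for $\tilde g$.
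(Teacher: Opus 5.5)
Your Steps 1 and 2 are correct and match the paper's argument. The gap is Step 3, and it cannot be closed, because the bracket on $\Lambda^{2,+}$ is not conformally invariant. Your own frame computation shows this. The identification $\Lambda^{2}T^{*}_{x}M\cong\mathfrak{so}(T_{x}M)$ uses the metric. So in \emph{any} orthonormal frame of the metric in use, the self-dual basis has the same structure constants: $[\omega_a,\omega_b]_{g}=c\,\epsilon_{abc}\omega_c$ and $[\tilde\omega_a,\tilde\omega_b]_{\tilde g}=c\,\epsilon_{abc}\tilde\omega_c$ with the same $c$. Substituting $\tilde\omega_a=u^{2}\omega_a$ and using that the bracket is pointwise bilinear gives $[\omega_a,\omega_b]_{\tilde g}=u^{-2}[\omega_a,\omega_b]_{g}$. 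Hence
\[
[B_{\bullet}B]_{\tilde g}=u^{-2}\,[B_{\bullet}B]_{g}.
\]
This is also visible from the single factor $g^{kl}$ in (\ref{E3}). The normalisation in (\ref{E17}) is the constant $\frac12$, so it cannot absorb this factor. The mechanism you hope for does not exist either. The conformal class fixes $\Lambda^{2,+}$ and its inner product only up to the pointwise factor $u^{-4}$. The cross product attached to an inner product on a $3$-dimensional space rescales with it, here by $u^{-2}$. So there is no metric-independent Lie structure on $\Lambda^{2,+}$.

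Consequently, keeping $B$ fixed, $F_A^{+}+\frac18[B_{\bullet}B]_{\tilde g}=(1-u^{-2})F_A^{+}$. This is nonzero wherever $u\neq1$ and $F_A^{+}\neq0$, already for constant $u\neq1$. Replacing $B$ by $uB$ repairs the quadratic equation but breaks the linear one, since $d_A(uB)=du\wedge B$. In effect, the two equations demand conformal weights $0$ and $1$ on $B$. So the lemma as stated fails for every solution with $F_A^{+}\not\equiv0$. It holds when $F_A^{+}\equiv0$, and for constant $u$ only after replacing $B$ by $uB$.

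The paper's proof asserts $[B^{\tilde g}_{\bullet}B^{\tilde g}]=[B^{g}_{\bullet}B^{g}]$ ``by the definition'' without any computation, so it has the same hole. Carrying out your Step 3 check refutes the claim rather than confirming it. What you can honestly prove is weaker. $B$ stays $\tilde g$-self-dual and $d_A$-closed, and $F_A^{+}=-\frac{u^{2}}{8}[B_{\bullet}B]_{\tilde g}$. Equivalently, $(A,B)$ satisfies the linear equation for $\tilde g$ and $(A,uB)$ the quadratic one. In a Bochner formula for $\tilde g$, the curvature term then only acquires the positive factor $u^{-2}$. That is the form in which any downstream ``conformal invariance'' should be used.
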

	\begin{proof}
In four dimensions, the Hodge star operator on $2$-forms is conformally invariant
$$\ast_{\tilde{g}}\eta =\ast_{g}\eta.$$
Hence the self-dual $2$-form is invariant, so
\[F_A^{+,\tilde{g}} = F_A^{+,g},\quad B^{\tilde{g}}=B^{g}.\]
Moreover, by the definition of $[\cdot_{\bullet}\cdot]$, we have
\[ [B^{\tilde{g}}_{\bullet}B^{\tilde{g}}]=[B^{g}_{\bullet}B^{g}].\]

For the divergence term we compute
\[d_{A}^{+,\ast_{\tilde{g}}}B^{\tilde{g}}=-\ast_{\tilde{g}}d_{A}\ast_{\tilde{g}}B^{\tilde{g}}=-\ast_{\tilde{g}}d_{A}B^{\tilde{g}}=-\ast_{\tilde{g}}d_{A}B^{g}.\]
Since $(A,B)$ solves the equations with respect to $g$, we have
$$F_{A}^{+,g}+\frac{1}{8}[B^{g}_{\bullet} B^{g}]=0,\quad d_{A}^{+,\ast_{g}}B^{g}=0.$$
The latter implies $d_{A}B^{g}=0$ because $B^{g}$ is self‑dual and $d_{A}^{\ast_{g}}=-\ast_{g}d_{A}\ast_{g}$. Using the conformal invariance of the star operator, we obtain
		\[
		F_{A}^{+,\ast_{\tilde{g}}}+ \frac{1}{8}[B^{\tilde{g}}_{\bullet}B^{\tilde{g}}] =F_{A}^{+,\ast_{g}}+ \frac{1}{8}[B^{g}_{\bullet}B^{g}] = 0,
		\]
		and
		\[
		d_{A}^{+,\ast_{\tilde{g}}}B^{\tilde{g}}=-\ast^{\tilde{g}}d_{A}B^{g}=0.
		\]
		Thus $(A,B)$ solves the Vafa-Witten equations with respect to $\tilde{g}$. 
	\end{proof}

Leveraging the conformal invariance established in Lemma \ref{L3},
 we now derive a Bochner-type identity tailored to solutions of the reduced Vafa--Witten equations \eqref{E8}. 
 To this end, we apply Lemma \ref{L1} to the self-dual form \(\omega = B\). 
A direct computation using the second equation in \eqref{E8} yields
\[\langle B,[F^{+}_{A},B]\rangle=\langle [B_{\bullet}B],F_{A}^{+}\rangle=-8|F^{+}_{A}|^{2}. \]
Substituting this relation into the Bochner formula \eqref{E2} leads to the following identity.

\begin{corollary}(cf. \cite[Section 3.1]{Mar10})\label{C1}
	Let $(M,g)$ be a Riemannian four-manifold, and $P\rightarrow M$ a principal $G$-bundle with connection $A$.
	 If $(A,B)$ is a solution of the Vafa--Witten equations, then $B$ satisfies the identity
	\begin{equation}
	-\frac{1}{2}\De_{g}|B|^{2}=|\na_{A}B|^{2}+\frac{1}{3}R_{g}|B|^{2}-2\langle B,W_{g}^{+}\star B\rangle+8|F_{A}^{+}|^{2}.
	\end{equation}
\end{corollary}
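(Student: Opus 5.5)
This identity follows by substituting into the Bochner formula of Lemma \ref{L1}, so the plan is to check the hypotheses and then rewrite the curvature term.

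\textbf{Harmonicity.} Lemma \ref{L1} requires $\w=B$ to be harmonic. The second equation of \eqref{E8} gives $d_A^{+,\ast}B=0$. Since $B$ is self-dual, $d_A^{\ast}B=-\ast d_A\ast B=-\ast d_A B$. Hence $d_A^{\ast}B=0$ is equivalent to $d_AB=0$, as already noted in the proof of Lemma \ref{L3}. So $B$ is $d_A$-closed and $d_A^\ast$-coclosed, i.e.\ harmonic in the sense of Lemma \ref{L1}. Applying \eqref{E2} to $B$ then yields
\[
-\tfrac12\De_g|B|^2=|\na_AB|^2+\tfrac13R_g|B|^2-2\langle B,W_g^+\star B\rangle-\langle B,[F_A^+,B]\rangle .
\]

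\textbf{The curvature term.} We need $\langle B,[F_A^+,B]\rangle=-8|F_A^+|^2$. This rests on the adjoint identity
\[
\langle B,[F,B]\rangle=\langle [B_\bullet B],F\rangle \quad\text{for } F\in\Om^{2,+}(\mathfrak g_P).
\]
To prove it, I would work pointwise in a local orthonormal basis $\{\w_1,\w_2,\w_3\}$ of $\La^{2,+}$ with $[\w_a,\w_b]_{\La^{2,+}}=c\,\epsilon_{abc'}\w_{c'}$, and write $B=\sum_a\w_a\otimes B_a$, $F=\sum_a \w_a\otimes F_a$. Both sides then reduce to sums of terms $\epsilon_{abc}\langle B_a,[F_b,B_c]\rangle$, up to the constant $c$. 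Ad-invariance of the inner product \eqref{E1} lets the bracket move between slots, i.e.\ $\langle X,[Y,Z]\rangle=\langle [Z,X],Y\rangle$, together with the antisymmetry of $\epsilon$. This identifies both sides.

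\textbf{Main obstacle.} The delicate point is the normalization. The bracket $[F^+_A,\w]$ in \eqref{E2} is defined by the index formula \eqref{E3}, while $[\cdot_\bullet\cdot]$ carries the factor $\frac12$ from \eqref{E17}. One must check that these conventions match exactly, with constant $1$, rather than differing by a factor. I would test this on a model case, e.g.\ $\mathfrak g=\mathfrak{su}(2)$ with $B=\w_1\otimes e_1+\w_2\otimes e_2$, comparing both sides explicitly. If the constants agree, Mares \cite[Section 3.1]{Mar10} confirms the convention.

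\textbf{Conclusion.} Given the identity, the first equation of \eqref{E8} gives $[B_\bullet B]=-8F_A^+$. Therefore
\[
\langle B,[F_A^+,B]\rangle=\langle [B_\bullet B],F_A^+\rangle=-8|F_A^+|^2,
\]
and substituting this into the displayed Bochner formula gives the stated identity, with the sign flipping to $+8|F_A^+|^2$.
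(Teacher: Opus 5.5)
Your proposal is correct and follows the paper's argument: apply Lemma \ref{L1} to $\omega=B$, then use $\langle B,[F_A^+,B]\rangle=\langle [B_\bullet B],F_A^+\rangle=-8|F_A^+|^2$ from the curvature equation. The paper leaves two steps to ``a direct computation'' that you make explicit. One is the harmonicity check ($d_A^{+,\ast}B=0\Rightarrow d_AB=0$). The other is the normalization of the adjoint identity, which a single nondegenerate test case settles, since your $\epsilon_{abc}$ reduction shows both sides agree up to a universal constant.
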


\subsection{A Conformal Invariant Estimate for Vafa--Witten Solutions}
To establish a conformally invariant estimate for solutions of the Vafa--Witten equations, 
we first recall a family of conformally covariant operators introduced by Gursky in the study of four‑manifolds with harmonic self‑dual Weyl curvature 
(cf. \cite{Gur00,Ito05}).  For an  $n$-dimensional Riemannian manifold $(M,g)$ and a real parameter  $t\in\mathbb{R}$, we define the operator 
\begin{equation}
\mathcal{L}^{t}_{g}=\frac{4(n-1)}{(n-2)}\De_{g}+R_{g}-t|W_{g}|,
\end{equation}
where $|W_{g}|$ denotes the norm of the Weyl curvature of $g$. In the specific case where $n=4$, which is our primary focus, this operator reduces to
\begin{equation}
\mathcal{L}^{t}_{g}=6\De_{g}+R_{g}-t|W^{\pm}_{g}|.
\end{equation}
The utility of this operator stems from its conformal covariance property.
\begin{lemma}(\cite[Lemma 3.1]{Gur00})
Let $\tilde{g}=u^{4/(n-2)}g$ be a metric conformal to $g$. Then the operator transforms according to
\begin{equation}
\mathcal{L}^{t}_{\tilde{g}}\phi=u^{-\frac{n+2}{n-2} }\mathcal{L}^{t}_{g}(\phi u).
\end{equation}
\end{lemma}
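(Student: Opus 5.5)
The plan is to verify the two conformal transformation laws separately and then combine them by the product rule. Write $\tilde g=u^{4/(n-2)}g$. In dimension four this is $\tilde g=u^{2}g$, and in general the weight is chosen so that the conformal Laplacian transforms cleanly. The operator $\mathcal{L}^{t}_{g}$ splits as the conformal Laplacian
\[
L_g=\frac{4(n-1)}{n-2}\De_g+R_g
\]
plus the zeroth-order term $-t|W_g|$, and each piece will be handled on its own.

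\textbf{Conformal Laplacian.} The key input is the classical scalar-curvature transformation law
\[
R_{\tilde g}=u^{-\frac{n+2}{n-2}}\Big(\frac{4(n-1)}{n-2}\De_g u+R_g u\Big),
\]
with the sign convention $\De_g=d^{\ast}d\ge0$ on functions used in the paper. Combined with the formula for the Laplacian of $\tilde g$ acting on a function $\phi$,
\[
\De_{\tilde g}\phi=u^{-\frac{4}{n-2}}\Big(\De_g\phi-2\langle d\log u,d\phi\rangle_g\Big),
\]
this gives the standard covariance $L_{\tilde g}\phi=u^{-\frac{n+2}{n-2}}L_g(u\phi)$. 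To check it, I would expand $\De_g(u\phi)=u\De_g\phi+\phi\De_g u-2\langle du,d\phi\rangle$. The $\phi\De_g u$ term together with $R_g u\phi$ reproduces $u^{\frac{n+2}{n-2}}R_{\tilde g}\phi$. The remaining terms match $u^{\frac{n+2}{n-2}}\frac{4(n-1)}{n-2}\De_{\tilde g}\phi$, since $u\cdot u^{-\frac{4}{n-2}}=u^{-\frac{n+2}{n-2}}\cdot u^{\frac{n+2}{n-2}}\cdot u^{\frac{n-6}{n-2}}$ up to the correct power. I expect this exponent and coefficient bookkeeping, in particular the coefficient $2$ versus $n-2$ in front of the gradient term, to be the only real obstacle. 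I would simply cite the classical result (Lee--Parker) for this step rather than recompute it.

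\textbf{Weyl term.} The Weyl tensor $W$ of type $(1,3)$ is conformally invariant, so its norm scales as $|W_{\tilde g}|_{\tilde g}=u^{-\frac{4}{n-2}}|W_g|_g$: each of the three lowered or raised indices in the full norm contributes one factor of $u^{-\frac{2}{n-2}}$, and the net effect is a single inverse metric factor. The same holds for $W^{\pm}$ in dimension four, because the Hodge star on 2-forms is conformally invariant there and so the splitting $W=W^{+}+W^{-}$ is preserved. It follows that
\[
-t|W_{\tilde g}|\phi=u^{-\frac{4}{n-2}}\big(-t|W_g|\big)\phi=u^{-\frac{n+2}{n-2}}\big(-t|W_g|\big)(u\phi),
\]
since $u^{-\frac{4}{n-2}}=u^{-\frac{n+2}{n-2}}\cdot u$.

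Adding the two identities gives $\mathcal{L}^{t}_{\tilde g}\phi=u^{-\frac{n+2}{n-2}}\mathcal{L}^{t}_{g}(\phi u)$, which is the claim.
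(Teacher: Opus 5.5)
Your proposal is correct. The paper gives no proof and only cites Gursky, and your argument is the standard verification behind that citation: the covariance $L_{\tilde g}\phi=u^{-\frac{n+2}{n-2}}L_g(u\phi)$ of the conformal Laplacian, combined with the scaling $|W_{\tilde g}|_{\tilde g}=u^{-\frac{4}{n-2}}|W_g|_g$, and for $W^{\pm}$ in dimension four the conformal invariance of $\ast$ on 2-forms.

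The one blemish is the exponent remark in your Laplacian step. The identity you wrote is vacuous; what you actually need is $u\cdot u^{\frac{4}{n-2}}=u^{\frac{n+2}{n-2}}$, which gives $\frac{4(n-1)}{n-2}\big(u\Delta_g\phi-2\langle du,d\phi\rangle_g\big)=\frac{4(n-1)}{n-2}\,u^{\frac{n+2}{n-2}}\Delta_{\tilde g}\phi$. Likewise, your index count for the Weyl norm is muddled. For the $(1,3)$ tensor, $|W|^2$ has three inverse metrics and one metric, a net of two inverse-metric factors, so it scales by $u^{-\frac{8}{n-2}}$; your conclusion for $|W|$ is correct.
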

For our purposes, we focus on the curvature invariant
\begin{equation}
F_{g}^{\pm}=R_{g}-t|W^{(\pm)}_{g}|.
\end{equation}
The following proposition guarantees the existence of metrics with signed definite $F_{\tilde{g}}^{\pm}$ within a given conformal class.

\begin{proposition}(\cite[Proposition 3.2]{Gur00})\label{P1}
Every conformal class on a closed manifold $(M,g)$ admits a $C^{2,\a}$ metric $\tilde{g}=u^{4/(n-2)}g$ with either $F_{\tilde{g}}^{\pm}>0$, $F_{\tilde{g}}^{\pm}<0$, or $F_{\tilde{g}}^{\pm}=0$; moreover, these three possibilities are mutually exclusive.	
\end{proposition}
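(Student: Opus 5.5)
The plan is to reduce to a linear eigenvalue problem for the conformally covariant operator $\mathcal{L}^{t}_{g}$. Fix $t$ and let $\lambda_1$ be the first eigenvalue of $\mathcal{L}^{t}_{g}=6\Delta_g+R_g-t|W^{\pm}_g|$ on the closed manifold $M$. The operator is a Schrödinger-type operator with continuous potential $R_g-t|W^{\pm}_g|$, so the first eigenvalue is simple. Its eigenfunction $\phi_1$ can be chosen strictly positive, by the standard argument: $|\phi|$ is also a minimizer of the Rayleigh quotient, and then the strong maximum principle applies. Since the three alternatives in the statement will be distinguished by the sign of $\lambda_1$, I expect mutual exclusivity to follow from the sign of $\lambda_1$ being a conformal invariant.

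First, I will prove that the sign of $\lambda_1$ depends only on the conformal class. Let $\tilde g=u^{4/(n-2)}g$. The covariance lemma gives $\mathcal{L}^{t}_{\tilde g}\phi=u^{-\frac{n+2}{n-2}}\mathcal{L}^{t}_g(\phi u)$. Because the volume form scales by $u^{2n/(n-2)}$, we get
\[
\int_M\phi\,\mathcal{L}^{t}_{\tilde g}\phi\,d\mathrm{vol}_{\tilde g}=\int_M(\phi u)\,\mathcal{L}^{t}_g(\phi u)\,d\mathrm{vol}_g .
\]
So the quadratic forms of $\mathcal{L}^{t}_{\tilde g}$ and $\mathcal{L}^{t}_g$ correspond under $\phi\mapsto \phi u$. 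This correspondence preserves the sets on which each form is positive, negative or zero. Taking infima of the Rayleigh quotients (the denominators are positive) shows that $\lambda_1(\tilde g)$ and $\lambda_1(g)$ have the same sign. This gives mutual exclusivity once existence is shown. If two metrics in the class had, say, $F>0$ and $F<0$, then testing with constants and with the first eigenfunction would force $\lambda_1>0$ and $\lambda_1<0$ simultaneously. For the case $F\equiv 0$, the operator becomes $6\Delta$, whose $\lambda_1$ is $0$.

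Second, I will construct the metric. Set $\tilde g=\phi_1^{4/(n-2)}g$ with $u=\phi_1>0$ and test function $\phi=1$. The covariance lemma yields
\[
F_{\tilde g}=\mathcal{L}^{t}_{\tilde g}1=\phi_1^{-\frac{n+2}{n-2}}\mathcal{L}^{t}_g\phi_1=\lambda_1\phi_1^{-\frac{4}{n-2}} .
\]
This has the sign of $\lambda_1$ everywhere, which proves the proposition.

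The main obstacle is regularity. The coefficient $|W_g|$ is only Lipschitz, being the norm of a smooth tensor, so it is not smooth where $W=0$. Elliptic theory therefore gives $\phi_1\in C^{2,\alpha}$ (via $W^{2,p}$ estimates and Schauder theory with Hölder potential), but not more. This is why the resulting metric is only $C^{2,\alpha}$. Positivity of $\phi_1$ also needs care: I will apply the strong maximum principle to $(6\Delta+c)\phi_1\ge0$ with $c\ge0$ chosen large, which gives $\phi_1>0$. One should also verify that the covariance lemma holds for $C^{2,\alpha}$ conformal factors. The curvature quantities involve only two derivatives of $u$, and the norm $|W|$ rescales pointwise by $u^{-4/(n-2)}$, so this should be fine.
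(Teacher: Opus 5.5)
Your proposal is correct, and it is essentially the standard argument behind the cited result; the paper gives no proof of its own and defers to Gursky's Proposition 3.2. You take the positive first eigenfunction $\phi_1$ of $\mathcal{L}^{t}_{g}$ as the conformal factor, so the covariance lemma gives $F_{\tilde g}=\lambda_1\phi_1^{-4/(n-2)}$; mutual exclusivity then follows from the conformal invariance of the sign of $\lambda_1$, and your treatment of the $C^{2,\alpha}$ regularity (forced by the merely Lipschitz potential $|W_g|$) and of positivity via the strong maximum principle is accurate.
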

For the remainder of this section, we restrict our attention to four dimensions and study the operator 
\[\mathcal{L}_{g}=6\De_{g}+R_{g}-2\sqrt{6}|W^{+}_{g}|  \]
and the corresponding curvature invariant 
$$F^{+}_{g}=R_{g}-2\sqrt{6}|W^{+}_{g}|.$$
In analogy with the Yamabe problem, let us define the functional
\[\hat{Y}[u]=\langle u,\mathcal{L}_{g}u\rangle/\|u\|^{2}_{L^{4}} \]
and the conformal invariant 
\[\hat{Y}(g)=\inf_{u\in L^{2}_{1}}\hat{Y}[u]. \]
The relationship between this new invariant and the standard Yamabe constant is clarified by Gursky \cite[Proposition 3.5]{Gur00},
 and the proof relies on solving a minimisation problem. 

\begin{proposition}(\cite[Proposition 3.5]{Gur00})\label{P2}
If $Y(g)>0$ and $\hat{Y}(g)\leq 0$, then there is a smooth metric $\tilde{g}=u^{2}g$ such that
\[\int_{M}R^{2}_{\tilde{g}}dvol_{\tilde{g}}\leq 24\int_{M}|W^{+}_{\tilde{g}}|^{2}dvol_{\tilde{g}}. \]
\end{proposition}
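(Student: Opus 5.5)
The strategy is to follow Gursky's argument in \cite{Gur00}, which is a modified Yamabe minimisation in the conformal class. The condition $\hat{Y}(g)\le 0$ lets us pick a metric in the class with $F^{+}_{\tilde g}\le 0$ pointwise. The condition $Y(g)>0$ lets us pick a metric whose scalar curvature is positive and controlled. We then combine the two by a variational comparison.

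\emph{Step 1: the first minimiser.} By conformal covariance of $\mathcal{L}_g$ and standard subcritical regularisation, as in the Yamabe problem, $\hat{Y}(g)$ is attained whenever $\hat Y(g)<Y(g)$. This holds here, since $\hat Y(g)\le 0<Y(g)$. The minimiser can be taken nonnegative, because $\hat Y[|u|]=\hat Y[u]$. By the Harnack inequality it is positive, and elliptic theory makes it $C^{2,\alpha}$, though not necessarily smooth because $|W^{+}|$ is only Lipschitz.

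The metric $\hat g=u^{2}g$ then satisfies
\[
R_{\hat g}-2\sqrt6|W^{+}_{\hat g}|=\hat Y(g)\,\mathrm{vol}(\hat g)^{-1/2}\le 0 ,
\]
that is, $R_{\hat g}\le 2\sqrt6|W^+_{\hat g}|$ pointwise. Since $Y(g)>0$, the metric $\hat g$ must have $R_{\hat g}>0$ somewhere, and where $R_{\hat g}\ge 0$ this gives $R_{\hat g}^2\le 24|W^+_{\hat g}|^2$. The difficulty is that $R_{\hat g}$ may be negative on some region. There the pointwise bound fails in the wrong direction, since $R^2$ could be large.

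\emph{Step 2: a metric with positive scalar curvature.} To fix this, I would instead work with a Yamabe-type metric $\tilde g=v^2g$ of positive constant scalar curvature, or more flexibly a metric solving $6\Delta v+R_g v=\lambda v^3$ with $\lambda=Y(g)>0$. I would then test the functional $\hat Y$ on $v$. Writing everything in the metric $\tilde g$ and using $\hat Y(\tilde g)=\hat Y(g)\le 0$ with test function $1$, we get
\[
0\ge \int_M \bigl(R_{\tilde g}-2\sqrt6|W^+_{\tilde g}|\bigr)\,d\mathrm{vol}_{\tilde g}.
\]
Since $R_{\tilde g}$ is a positive constant,
\[
\int_M R_{\tilde g}^2=R_{\tilde g}\int_M R_{\tilde g}\le 2\sqrt6\,R_{\tilde g}\int_M|W^+_{\tilde g}| .
\]
Cauchy--Schwarz gives $\int_M|W^+_{\tilde g}|\le \|W^+_{\tilde g}\|_{L^2}\,\mathrm{vol}(\tilde g)^{1/2}$. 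Combined with $R_{\tilde g}^2\,\mathrm{vol}(\tilde g)=\int_M R_{\tilde g}^2$, this yields
\[
\Bigl(\int_M R_{\tilde g}^2\Bigr)^{1/2}\le 2\sqrt6\,\|W^+_{\tilde g}\|_{L^2},
\]
which is exactly the claimed inequality after squaring. The smoothness of the Yamabe solution gives a smooth $\tilde g$.

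\emph{Main obstacle.} The one point to check carefully is that $\hat Y(g)$ really tests against the constant function in the $\tilde g$ picture. This requires the conformal covariance lemma in the form $\langle\phi,\mathcal{L}_{\tilde g}\phi\rangle_{\tilde g}=\langle \phi v,\mathcal{L}_g(\phi v)\rangle_g$ and $\|\phi\|_{L^4(\tilde g)}=\|\phi v\|_{L^4(g)}$. Once this is verified, Step 1 is not even needed. If that check failed, I would fall back on Gursky's full minimisation argument, combining the minimiser from Step 1 with the Yamabe metric through a convexity argument in the conformal factor.
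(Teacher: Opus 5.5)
There is a genuine gap in Step 2, and it is not the covariance check you single out. That identity is correct: $\hat Y_{\tilde g}[\phi]=\hat Y_g[\phi v]$, so testing with $\phi\equiv 1$ in $\tilde g$ is the same as testing $\hat Y_g$ at the Yamabe function $v$. The failure is the direction of the inequality. $\hat Y$ is an \emph{infimum}, so a particular test function only gives an upper bound:
$\hat Y(g)\le \hat Y_{\tilde g}[1]=\mathrm{vol}(\tilde g)^{-1/2}\int_M(R_{\tilde g}-2\sqrt6|W^+_{\tilde g}|)\,d\mathrm{vol}_{\tilde g}$.
From $\hat Y(g)\le 0$ you learn that \emph{some} function makes the quotient nonpositive, not that $\phi\equiv1$ does. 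The sign of $\hat Y$ is the sign of the bottom eigenvalue of $6\Delta_{\tilde g}+R_{\tilde g}-2\sqrt6|W^+_{\tilde g}|$. By contrast, $\hat Y_{\tilde g}[1]$ is merely the average of the potential. If $|W^+_{\tilde g}|$ is large on a small region, the eigenvalue can be negative while that average is positive. So your inequality $0\ge\int_M(R_{\tilde g}-2\sqrt6|W^+_{\tilde g}|)\,d\mathrm{vol}_{\tilde g}$ is unjustified for the Yamabe metric, and everything after it falls. The fallback you mention (a ``convexity argument in the conformal factor'') does not say what replaces it.

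The missing idea is to interpolate in the coefficient of $|W^+|$. This is Gursky's argument, which the paper simply cites, and it is exactly what cures the negativity of $R_{\hat g}$ you correctly diagnosed in Step 1. For $t\in[0,2\sqrt6]$, let $\hat Y_t(g)$ be the invariant built from $\mathcal L^t_g=6\Delta_g+R_g-t|W^+_g|$. Then $\hat Y_0=Y(g)>0$ and $\hat Y_{2\sqrt6}=\hat Y(g)\le 0$. By H\"older, $\int_M|W^+|u^2\le\|W^+\|_{L^2}\|u\|_{L^4}^2$, so $|\hat Y_t-\hat Y_s|\le|t-s|\,\|W^+\|_{L^2}$. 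Hence $\hat Y_{t_0}(g)=0$ for some $t_0\in(0,2\sqrt6]$. Equivalently, the first eigenvalue of $\mathcal L^{t_0}_g$ vanishes, and its positive first eigenfunction $u\in C^{2,\alpha}$ solves $\mathcal L^{t_0}_g u=0$. By conformal covariance, $\tilde g=u^2g$ satisfies $R_{\tilde g}=t_0|W^+_{\tilde g}|\ge 0$ \emph{pointwise}. Therefore $R_{\tilde g}^2=t_0^2|W^+_{\tilde g}|^2\le 24|W^+_{\tilde g}|^2$, and integrating gives the claim.

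Two smaller remarks. The metric obtained this way is $C^{2,\alpha}$ rather than smooth, which suffices for the later Gauss--Bonnet use. In Step 1, the attainment criterion for such Yamabe-type invariants is $\hat Y<Y(S^4)$, not $\hat Y<Y(g)$, though it is satisfied here either way.
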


The following proposition links the existence of nontrivial Vafa--Witten solutions to the sign of $\hat{Y}(g)$. 
Its proof follows the Bochner technique developed in \cite{GKS18} for Yang--Mills connections, adapted here to the Vafa--Witten setting.

\begin{proposition}\label{P4}
If $(A,B)$ is a solution to Vafa--Witten equations with $B\not\equiv 0$ on $(M,g)$, then $\hat{Y}(g)\leq 0$.
\end{proposition}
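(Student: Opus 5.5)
We argue by contradiction: suppose $\hat{Y}(g)>0$. By Proposition \ref{P1} applied with $t=2\sqrt{6}$ (the three alternatives $F^{+}>0$, $<0$, $=0$ correspond to the sign of $\hat{Y}(g)$, as in \cite{Gur00}), we can pick a conformal metric $\tilde{g}=u^{2}g$ with $F^{+}_{\tilde{g}}=R_{\tilde{g}}-2\sqrt{6}|W^{+}_{\tilde{g}}|>0$ pointwise. If this metric is only $C^{2,\alpha}$, that is enough for the Bochner argument below. By Lemma \ref{L3}, $(A,B)$ is still a solution of the Vafa--Witten equations with respect to $\tilde{g}$. We therefore work entirely in $\tilde{g}$ and drop the tildes.

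The next step is to insert the pointwise estimates into Corollary \ref{C1}. Since $d_A^{+,\ast}B=0$ and $B$ is self-dual, we have $d_AB=0$, so $B$ is harmonic. Lemma \ref{L2} gives $-2\langle B,W^{+}\star B\rangle\ge -\frac{4}{\sqrt6}|W^{+}||B|^{2}$, and the Kato inequality of Proposition \ref{P3} gives $|\nabla_AB|^2\ge\frac32|\nabla|B||^2$. Combining these with $8|F^{+}_A|^2\ge0$, we obtain
\[
-\frac12\Delta|B|^2\ \ge\ \frac32|\nabla|B||^2+\frac13\Big(R-2\sqrt6|W^{+}|\Big)|B|^2 .
\]
Note that $\frac{4}{\sqrt6}=\frac{2\sqrt6}{3}$, which is exactly why the coefficient $2\sqrt6$ appears in $F^{+}$.

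To conclude, we integrate this inequality over the closed manifold $M$. The Laplacian term integrates to zero, so
\[
0\ge\int_M\frac32|\nabla|B||^2+\frac13\int_M F^{+}|B|^2 .
\]
Since $F^{+}>0$ everywhere, this forces $|B|\equiv0$, contradicting $B\not\equiv0$. Hence $\hat{Y}(g)\le0$.

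Alternatively, one can avoid choosing the special metric. Set $v=|B|^{1/2}$ (or a smoothed version $(|B|^2+\epsilon)^{1/4}$) and test $\hat{Y}$ with it. The Kato constant $\frac32$ is precisely what turns the inequality into $\langle v,\mathcal{L}_g v\rangle\le 0$, since $6|\nabla v|^2=\frac{3}{2}|B|^{-1}|\nabla|B||^2$. This route is the GKS approach.

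The main obstacle is the regularity issue. The Kato inequality and the function $|B|$ can fail to be smooth on the zero set of $B$, and the conformal metric from Proposition \ref{P1} is only $C^{2,\alpha}$. The first-route argument only needs the integrated identity for $|B|^2$, which is smooth, together with the distributional Kato inequality. To handle these rigorously, I would use the regularization $(|B|^2+\epsilon)$ and let $\epsilon\to0$.
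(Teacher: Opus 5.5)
Your argument is correct, but its main line is not the paper's proof of this proposition; the ``alternative'' you sketch at the end is.

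The paper never changes the metric. On $\{B\neq 0\}$ it combines Corollary \ref{C1}, Lemma \ref{L2} and the sharp Kato inequality of Proposition \ref{P3} into $-\Delta_g|B|^{1/2}\ge\frac16\,(R_g-2\sqrt6|W^+_g|)\,|B|^{1/2}$, i.e.\ $\mathcal{L}_g(|B|^{1/2})\le 0$. It then reads off $\hat{Y}(g)\le 0$ by testing with $|B|^{1/2}$.

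Your main route is instead the contradiction argument the paper uses at the start of the proof of Theorem \ref{T1}. From $\hat{Y}(g)>0$ you obtain a metric $\tilde g\in[g]$ with $F^+_{\tilde g}>0$, via Gursky's first eigenfunction (or Proposition \ref{P1} plus testing $\hat{Y}$ on the conformal factor). The integrated Bochner identity then kills $B$.

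\textbf{What your route buys.} Only $|\nabla_A B|^2\ge 0$ is used, so the Kato inequality you quote is superfluous there. You also integrate the smooth function $|B|^2$, so the zero set of $B$ is harmless. The paper's route, by contrast, needs the sharp constant $\frac32$ exactly to make the exponent $\frac12$ match the coefficient $6$ in $\mathcal{L}_g$. Strictly, it also needs a regularization such as $(|B|^2+\epsilon^2)^{1/4}$ near $B^{-1}(0)$, which the paper omits.

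\textbf{What it costs.} You need a $C^{2,\alpha}$ conformal metric and the transfer of the equations through Lemma \ref{L3}. That transfer is more delicate than stated. The bracket $[B_\bullet B]$ contains a contraction with $g^{kl}$ (cf.\ (\ref{E3})), so $[B_\bullet B]_{\tilde g}=u^{-2}[B_\bullet B]_g$ for $\tilde g=u^2g$, and $(A,B)$ is not literally a $\tilde g$-solution. Your argument survives: $d_AB=0$ is conformally invariant, and the curvature term in the $\tilde g$-Bochner formula becomes $8u^{-2}|F^+_A|^2_{\tilde g}\ge 0$. You should say this explicitly, though; the paper's direct route never meets the issue.
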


\begin{proof}
Starting from Corollary \ref{C1} and applying the pointwise estimate from Lemma \ref{L2}, we immediately obtain
\begin{equation*}
\begin{split}
-\frac{1}{2}\De_{g}|B|^{2}&\geq|\na_{A}B|^{2}+\frac{1}{3}(R-2\sqrt{6}|W^{+}|)|B|^{2}+8|F^{+}_{A}|^{2}\\
&\geq|\na_{A}B|^{2}+\frac{1}{3}(R-2\sqrt{6}|W^{+}|)|B|^{2}.\\
\end{split}
\end{equation*}
Away from the zero locus of $B$, the Leibniz rule yields
\[\frac{1}{2}\De_{g}|B|^{2}=|B|\De_{g}(|B|)-|\na|B||^{2}.\]
Substituting this into the previous inequality and employing the sharp Kato inequality from Proposition \ref{P3}, we find
\begin{equation*}
\begin{split}
-|B|\De_{g}(|B|)&\geq|\na_{A}B|^{2}+\frac{1}{3}(R-2\sqrt{6}|W^{+}|)|B|^{2}-|\na|B||^{2}\\
&\geq \frac{1}{2}|\na|B||^{2}+\frac{1}{3}(R-2\sqrt{6}|W^{+}|)|B|^{2},\\
\end{split}
\end{equation*}
which implies
\[-\De_{g}|B|\geq\frac{1}{2}\frac{|\na|B||^{2} }{|B|}+\frac{1}{3} (R-2\sqrt{6}|W^{+}|)|B|.\]
It follows that 
\[-\De_{d}|B|^{\frac{1}{2}}\geq\frac{1}{6} (R-2\sqrt{6}|W^{+}|)|B|^{\frac{1}{2}}.\]
This inequality implies that the first eigenvalue of $\mathcal{L}_{g}$ is non-positive; hence, $\hat{Y}(g)\leq0$.
\end{proof}
We now assemble these preliminary results to complete the proof of our main theorem.

\begin{proof}[\textbf{Proof of Theorem \ref{T1}}]
Let $(A,B)$ be a solution of the Vafa--Witten equations. By Lemma \ref{L3}, the equations are invariant under a conformal change of metric $\tilde{g}=u^{2}g$. 

Assume, for contradiction, that there exists a metric $\tilde{g}$ on $M$ conformal to $g$ for which the curvature invariant satisfies $F^{+}_{\tilde{g}}>0$. Then applying the Böchner identity to the solution $B$ with respect ot the metric $\tilde{g}$, we obtain
\begin{equation*}
	\begin{split}
	-\frac{1}{2}\De_{\tilde{g}}|B|^{2}
	&=|\na_{A}B|^{2}+\frac{1}{3}R_{\tilde{g}}|B|^{2}-2\langle B,W^{+}_{\tilde{g}}\star B\rangle+\frac{1}{8}|[B_{\bullet}B]|^{2}\\
	&\geq |\na_{A}B|^{2}+8|F_{A}^{+}|^{2}+\frac{1}{3}F_{\tilde{g}}|B|^{2}.
	\end{split}
	\end{equation*}
Integrating this inequality over the closed manifold $M$ yields
	\[0\geq \int_{M}(|\na_{A}B|^{2}+8|F_{A}^{+}|^{2}+\frac{1}{3}F_{\tilde{g}}|B|^{2})dvol_{\tilde{g}}. \]
Since $F_{\tilde{g}}^{+}>0$ by assumption, this forces $B=0$ and $F_{A}^{+}=0$ everywhere, contradicting the hypothesis $B\neq0$.
Therefore no metric in the conformal class can have $F_{\tilde{g}}^{+}>0$. By Proposition \ref{P1}, this implies that for any metric $\tilde{g}$ in the conformal class $[g]$, we must have $F_{\tilde{g}}^{+}\leq0$.
The role of the condtion $F_{\tilde{g}}^{+}\leq0$ is primarily to eliminate the hypothetical “positive” branch from consideration, 
ensuring that the minimisation problem in Proposition \ref{P2} is carried out in the correct sign regime.  

Since $(M,g)$ admits a $C^{2,\a}$ metric $\tilde{g}=u^{2}g$ with $F_{\tilde{g}}^{+}<0$, we have
\[\int_{M}R_{\tilde{g}}dvol_{\tilde{g}}\leq 2\sqrt{6}\int_{M}|W^{+}_{\tilde{g}}|dvol_{\tilde{g}}\leq 2\sqrt{6}\Big(\int_{M}|W^{+}_{\tilde{g}}|^{2}dvol_{\tilde{g}}\Big)^{\frac{1}{2}}\sqrt{\mathrm{Vol}(\tilde{g})}.\]
By definition of Yamabe constant of $(M,g)$, we then have
\[Y(g)=\inf_{\tilde{g}\in [g]}\frac{\int_{M}R_{\tilde{g}}dvol_{\tilde{g}} }{\sqrt{\mathrm{Vol}(\tilde{g})}}\leq 2\sqrt{6}\Big(\int_{M}|W^{+}_{\tilde{g}}|^{2}dvol_{\tilde{g}}\Big)^{\frac{1}{2}} . \]
In four dimensions, $\int_{M}|W^{+}_{\tilde{g}}|^{2}dvol_{\tilde{g}}$ is conformally invariant; consequently, 
$$Y(g)\leq2\sqrt{6}\|W^{+}_{g}\|_{L^{2}(M)}.$$
Now assume in addition that the Yamabe constant is positive, i.e.  $Y(g)>0$. Propositions \ref{P2} and \ref{P4} together imply the existence of a metric $\tilde{g}$ conformal to $g$ satisfying the integral inequality
\begin{equation}\label{E12}
Y^{2}(g)\leq \int_{M}R^{2}_{\tilde{g}}dvol_{\tilde{g}}\leq 24\int_{M}|W^{+}_{\tilde{g}}|^{2}dvol_{\tilde{g}},
\end{equation}
which implies $\eqref{E0}$.

Using the Chern--Gauss--Bonnet and signature formulas,
\begin{equation}\label{E18}
\begin{split}
2\pi^{2}(2\chi(M)+3\sigma(M))&=\int_{M}|W^{+}_{\tilde{g}}|^{2}dvol_{\tilde{g}}+\frac{1}{48}\int_{M}|R_{\tilde{g}}|^{2}dvol_{\tilde{g}}\\
&-\frac{1}{4}\int_{M}|E_{\tilde{g}}|^{2}dvol_{\tilde{g}},
\end{split}
\end{equation}
where $E_{\tilde{g}}$ denotes the trace-free Ricci tensor of $\tilde{g}$. Substituting the estimate (\ref{E12}) into (\ref{E18}) yields
\begin{equation*}
2\pi^{2}(2\chi(M)+3\sigma(M))\leq\frac{3}{2}\int_{M}|W^{+}_{\tilde{g}}|^{2}dvol_{\tilde{g}}-\frac{1}{4}\int_{M}|E_{\tilde{g}}|^{2}dvol_{\tilde{g}}.
\end{equation*}
Since the self-dual Weyl tensor is conformally invariant, rearranging gives the desired topological bound
\[\int_{M}|W^{+}_{g}|^{2}=\int_{M}|W^{+}_{\tilde{g}}|^{2}\geq\frac{4}{3}\pi^{2}(2\chi(M)+3\sigma(M)),\]
which is exactly inequality $\eqref{E10}$.

Finally, if equality  $Y(g)=2\sqrt{6}\|W^{+}_{g}\|_{L^{2}}$ holds, then we must have
\[0=\int_{M}-\frac{1}{2}\De_{\tilde{g}}|B|^{2}dvol_{\tilde{g}}\geq\int_{M}(|\na_{A}B|^{2}+8|F_{A}^{+}|^{2})dvol_{\tilde{g}}. \]
which forces $F^{+}_{A}=0$ and $\na_{A}B=0$. This completes the proof of Theorem \ref{T1}.

\end{proof}
The following lemma will be useful in proving Corollary \ref{C2}.
\begin{lemma}\label{L5}
Let $4s=a_{1}+a_{2}+a_{3}+a_{4}$ and $a=a_{1}\leq a_{2}\leq a_{3}\leq a_{4}$. Then
\[ \sum_{i=1}^{4}(a_{i}-s)^{2}\leq 12(s-a)^{2}.\]
In particular, equality holds if and only if $a_{1}=a_{2}=a_{3}=a$ and $a_{4}=4s-3a$. 
\end{lemma}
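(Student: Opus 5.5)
Set $b_i=a_i-s$, so that $\sum_i b_i=0$ and $b_1=a-s\le b_2\le b_3\le b_4$. The claim becomes
\[
\sum_{i=1}^4 b_i^2\le 12\,b_1^2 .
\]
Since $\sum_i b_i=0$ and $b_1$ is the minimum, $b_1\le 0$. If $b_1=0$, then every $b_i\ge 0$ with zero sum, so all $b_i=0$ and both sides vanish. Otherwise rescale so that $b_1=-1$.

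After this normalisation, $b_2,b_3,b_4$ all lie in $[-1,\infty)$ and satisfy $b_2+b_3+b_4=1$. The quantity $b_2^2+b_3^2+b_4^2$ is a convex function on this compact simplex-type region, so its maximum is attained at a vertex. The vertices are the permutations of $(-1,-1,3)$. Hence
\[
b_2^2+b_3^2+b_4^2\le 1+1+9=11,
\]
and therefore $\sum_i b_i^2\le 12=12\,b_1^2$.

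A direct alternative avoids the convexity argument. Write $c_i=b_i+1\ge 0$ for $i=2,3,4$, so that $\sum c_i=4$. Then
\[
\sum_{i=2}^4 b_i^2=\sum c_i^2-2\sum c_i+3=\sum c_i^2-5,
\]
and $\sum c_i^2\le\big(\sum c_i\big)^2=16$ because the $c_i$ are nonnegative. This again gives the bound $11$.

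For the equality case, $\sum c_i^2=\big(\sum c_i\big)^2$ with $c_i\ge 0$ forces all cross terms $c_ic_j$ to vanish. So at most one $c_i$ is nonzero, and by the ordering it must be $c_4=4$. This means $b_2=b_3=-1$ and $b_4=3$. Undoing the scaling, $a_1=a_2=a_3=a$ and $a_4=s+3(s-a)=4s-3a$.

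The degenerate case $b_1=0$ (all $a_i$ equal) also satisfies $a_1=a_2=a_3=a$ and $a_4=4s-3a=a$, so it fits the stated characterisation. Conversely, if $a_1=a_2=a_3=a$ and $a_4=4s-3a$, both sides equal $12(s-a)^2$. The proof is elementary throughout; the only point needing care is the scaling step, where the case $b_1=0$ has to be split off.
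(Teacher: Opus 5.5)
Your proof is correct, and it takes a different route from the paper's. Both start from $b_i=a_i-s$. The paper then maximizes one coordinate at a time. It eliminates $b_3=(s-a-b_4)-b_2$, maximizes the convex one-variable quadratic $b_2^2+((s-a-b_4)-b_2)^2$ at the endpoint $b_2=a-s$ with $b_4$ fixed, and then maximizes $(2(s-a)-b_4)^2+b_4^2$ at $b_4=3(s-a)$. That route uses the ordering at each stage to fix the admissible intervals, and it reads the equality case off the location of the maximizers rather than deriving it.

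Your shift $c_i=b_i-b_1\ge 0$ replaces this with the single inequality $\sum c_i^2\le(\sum c_i)^2$. Its equality case (every cross term $c_ic_j$ vanishes) gives the characterization directly. It also shows that the ordering matters only through $b_1$ being the minimum, plus deciding which $c_i$ carries all the mass. Your first argument, convexity on a simplex, is essentially the paper's idea applied globally rather than coordinate by coordinate; the second argument is the cleaner one.

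The normalization $b_1=-1$ and the separate case $b_1=0$ are not needed. The identity $\sum_{i=1}^4(a_i-s)^2=\sum_{i=2}^4(a_i-a)^2-4(s-a)^2$, together with $\sum_{i=2}^4(a_i-a)=4(s-a)$, gives the bound $16(s-a)^2-4(s-a)^2=12(s-a)^2$ in one homogeneous step.
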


\begin{proof}
Set $b_{i}=a_{i}-s$, then $\sum_{i=1}^{4}b_{i}=0$ and $a-s=b_{1}\leq\cdots\leq b_{4}$. Therefore, 
\begin{equation*}
\begin{split}
\sum_{i=1}^{4}(a_{i}-s)^{2}&=\sum_{i=1}^{4}b_{i}^{2}=(a-s)^{2}+b_{2}^{2}+b_{3}^{2}+b^{2}_{4}\\
&=(a-s)^{2}+b_{2}^{2}+\big{(}(s-a-b_{4})-b_{2}\big{)}^{2}+b_{4}^{2}.
\end{split}
\end{equation*}
For fixed $b_{4}$, noting that $s-a-b_{4}\geq-2(a-s)$, then the quadratic expression $b_2^2 + ((s-a-b_4)-b_2)^2$ is maximized when $b_2 =a-s$, yielding
$$b_{2}^{2}+\big{(}(s-a-b_{4})-b_{2}\big{)}^{2}\leq(a-s)^{2}+(2(s-a)-b_{4})^{2}.$$ 
Consequently,
\[\sum b_i^2 \le 2(s-a)^{2}+\big(2(s-a)-b_4\big)^2 + b_4^2.
\]
The quadratic expression $\big(2(s-a)-b_4\big)^2 + b_4^2$ is maximized when $b_3=a-s$ and $b_4=3(s-a)$.
Hence
\[
\sum b_i^2 \le 12(s-a)^2,
\]
equality holds if and only if $b_{1}=b_{2}=b_{3}=a-s$ and $b_{4}=3(s-a)$. 
\end{proof}

\begin{proof}[\textbf{Proof of Corollary \ref{C2}}]
Let $\{e_{1},\cdots,e_{4}\}$ be an orthonormal basis of $T_{x}M$ diagonalizing $\mathrm{Ric}(g)$ at a point $x\in M$
 with corresponding eigenvalues $\la_{1}\leq\cdots\leq\la_{4}$. Then 
$$\la_{1}+\dots+\la_{4}=R_{g},$$ 
where $R_{g}$ denotes the scalar curvature.	
	
Under the condition $\la_{1}\geq\frac{1}{6}R_{g}$ and Lemma \ref{L5}, we obtain
\[|E_{g}|^{2}=|\mathrm{Ric}(g)-\frac{R_{g}}{4}g|^{2}=\sum_{i=1}^{4}|\la_{i}-\frac{R_{g}}{4}|^{2}\leq \frac{1}{12}R^{2}_{g}.\]
Using the Chern--Gauss--Bonnet and signature formulas (see (\ref{E18})), we obtain
\begin{equation*}
\begin{split}
\int_{M}|W^{+}_{g}|^{2}dvol_{g}&=2\pi^{2}(2\chi(M)+3\sigma(M))+\frac{1}{4}\int_{M}|E_{g}|^{2}dvol_{g}-\frac{1}{48}\int_{M}|R_{g}|^{2}dvol_{g}\\
&\leq 2\pi^{2}(2\chi(M)+3\sigma(M)).
\end{split}
\end{equation*}
Applying Theorem \ref{T1}, we get
\begin{equation*}
2\pi^{2}(2\chi(M)+3\sigma(M))\geq\frac{1}{24}Y^{2}(g)>0.
\end{equation*}
Equality holds if and only if \(Y^{2}(g)=24\|W^{+}_{\tilde{g}}\|^{2}_{L^{2}}=12\int_{M}|E_{\tilde{g}}|^{2}dvol_{\tilde{g}}\).
\end{proof}

\subsection{Rigidity in the Equality Case}
The results of the previous section show that the equality case in Theorem \ref{T1} forces the solution $(A,B)$ to satisfy $F_{A}^{+}=0$, $[B_{\bullet}B]=\na_{A}B=0$. In this section we analyze the geometric consequences of these conditions, leading to a characterization of the equality case in Theorem \ref{T2}. Throughout this section we assume that the structure group of the principal bundle $P$ is either $SU(2)$ or $SO(3)$.

We first recall the notion of rank of a section $B\in\Om^{2,+}(M,\mathfrak{g}_{P})$ (see \cite{DG22} or \cite[Definiton 1.5]{Tan17}). Let $d=\dim G$. Choose local frames for $\mathfrak{g}_{P}$ and $\La^{2,+}(T^{\ast}M)$; note that $\dim\La^{2,+}(T^{\ast}M)=3$. With respect to these local frames, the section $B$ is represented by a $d\times 3$ matrix-valued function. The rank of $B$ at a point $x\in M$ is the rank of the matrix at $x$. We denote by ${\rm{rank}}(B)$ the maximum of the pointwise rank over $M$. The pointwise rank of $B$ also gives a stratification of the manifold $M$, namely,
\begin{equation}
M^{i}(B) =\{x\in M:{\rm{rank}}(B(x))=i\},\quad 0\leq i\leq {\rm{rank}}(B).
\end{equation}
The top rank stratum is a nonempty open subset of $M$.  If the structure group of the principal bundle $P$ is either $SU(2)$ or $SO(3)$, then the possibilities for the rank of $B$ are less than or equal to $3$. 

The condition $[B_{\bullet}B]=0$ implies that the components of $B$ are linearly dependent at each point, 
which meaning that $B$ has at most rank one (see \cite[Lemma 3.5]{Hua24} or \cite[Lemma 1.6]{Tan17}). 
The following lemma shows that the conditions $[B_{\bullet}B]=0$ and $\na_{A}B=0$ force $B$ to have rank one unless it vanishes identically. 

\begin{lemma}\label{L4}
Let $P\rightarrow M$ be a principal $SU(2)$ or $SO(3)$ bundle over a closed four-dimensional Riemannian manifold $M$. If $B\in\Om^{+}(M,\mathfrak{g}_{P})$ satisfies
$$[B_{\bullet}B]=0\quad and\quad \na_{A}B=0,$$ 
then the rank of $\mathrm{rank}(B)=1$ unless $B\equiv0$.
\end{lemma}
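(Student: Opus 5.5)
Our plan is to combine two facts. First, the algebraic condition $[B_{\bullet}B]=0$ forces the pointwise rank of $B$ to be at most one. Second, the parallel condition $\na_{A}B=0$ forces the pointwise rank to be constant on $M$. The statement then follows at once: $\mathrm{rank}(B)\in\{0,1\}$, and rank zero everywhere means $B\equiv0$.

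For the algebraic step we invoke \cite[Lemma 1.6]{Tan17} (equivalently \cite[Lemma 3.5]{Hua24}), as recalled just before the lemma. Fix an orthonormal frame $\{\w_1,\w_2,\w_3\}$ of $\La^{2,+}$ with $[\w_i,\w_j]_{\La^{2,+}}=c\,\epsilon_{ijk}\w_k$ for some nonzero constant $c$, and write $B=\sum_i\w_i\otimes B_i$ with $B_i\in\mathfrak{g}_P$. By \eqref{E17}, $[B_{\bullet}B]$ is then a nonzero multiple of $\sum_k \w_k\otimes \epsilon_{ijk}[B_i,B_j]$. So $[B_{\bullet}B]=0$ is equivalent to $[B_i,B_j]=0$ for all $i,j$. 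Since $\mathfrak{g}=\mathfrak{su}(2)\cong\mathfrak{so}(3)$ has rank one, pairwise commuting elements lie in a common line. Hence $B_1,B_2,B_3$ are linearly dependent, and at each $x$ we can write $B(x)=\eta\otimes\sigma$ with $\eta\in\La^{2,+}_x$ and $\sigma\in\mathfrak{g}_{P,x}$. This shows $\mathrm{rank}(B(x))\le1$ everywhere.

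For the constancy step we use parallel transport. The connection $\na_A$ on $\La^{2,+}\otimes\mathfrak{g}_P$ is built from the Levi-Civita connection and $A$, so its parallel transport along any path $\gamma$ from $x$ to $y$ is a tensor product $\tau^{\La}_\gamma\otimes\tau^{\mathfrak g}_\gamma$ of linear isomorphisms. Because $\na_A B=0$, we have $B(y)=(\tau^{\La}_\gamma\otimes\tau^{\mathfrak g}_\gamma)B(x)$. The rank of the associated $3\times d$ matrix is unchanged when the matrix is multiplied on both sides by invertible matrices. We take $M$ connected, as is implicit in the paper; otherwise we argue on each component, where "$B\equiv 0$" is read componentwise. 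Then the rank of $B(x)$ is independent of $x$. An equivalent route avoiding transport: $\na_A B=0$ gives $d|B|^2=0$, so $|B|$ is constant, and either $B\equiv0$ or $B$ vanishes nowhere. In the nonvanishing case the rank is at least one and, by the previous paragraph, at most one.

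We expect the main obstacle to be the algebraic reduction. One has to check carefully that, under the normalization \eqref{E17}, the vanishing of $[B_{\bullet}B]$ really is equivalent to the vanishing of all brackets $[B_i,B_j]$. This relies on the structure constants of $\La^{2,+}\cong\mathfrak{so}(3)$ being nondegenerate, that is, on each $\w_k$-component isolating a single bracket $[B_i,B_j]$. The remaining input, that commuting elements of $\mathfrak{su}(2)$ are collinear, is elementary: $[u,v]$ is proportional to the cross product $u\times v$ under $\mathfrak{su}(2)\cong(\mathbb{R}^3,\times)$.
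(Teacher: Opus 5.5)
Your proposal is correct and is essentially the paper's argument: the paper cites \cite[Lemma 1.6]{Tan17} for $\mathrm{rank}(B)\le 1$ and then uses $\nabla_A B=0$ to make $|B|$ a nonzero constant, so $B$ has no zeros and has rank one everywhere. That is exactly your ``equivalent route''. Your explicit verification of Tanaka's lemma via the structure constants of $\Lambda^{2,+}$, your parallel-transport variant, and your remark on connectedness are correct additional detail but do not change the approach.
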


\begin{proof}
According to \cite[Lemma 1.6]{Tan17}, the commutator condition $[B_{\bullet}B] = 0$ forces the pointwise rank of $B$ to satisfy $\mathrm{rank}(B) \leq 1$. Consequently, we can define the zero locus of $B$ as
\[M^{0}(B)=\{x\in M: \mathrm{rank}(B(x))=0\}=\{x\in M:B(x)=0 \}.\]
Suppose, for the sake of contradiction, that there exists a point $x \in M$ where $B(x) \neq 0$. Because $\nabla_{A} B = 0$, the covariant derivative of $B$ vanishes identically, which immediately implies that its pointwise norm $|B|$ is non-zero constant. As a result, the zero set $M^{0}(B)$ is empty. It follows directly that $\mathrm{rank}(B) = 1$ at every point of $M$.
\end{proof}
We now recall the definition of an irreducible connection on a principal $G$-bundle (cf. \cite{DK90,Hua19,Tan17}). For a connection $A$ on a principal $G$-bundle $P\rightarrow M$, its stabilizer $\Ga_{A}$ of $A$ in the gauge group  $\mathcal{G}_{P}$ is defined as
$$\Ga_{A}:=\{u\in\mathcal{G}_{P}|u^{\ast}(A)=A \}.$$
A connection $A$ is called reducible if its stabilizer $\Ga_{A}$ is strictly larger than the center $C(G)$ of $G$. Otherwise, $A$ said to be irreducible, and in this case
$$\Ga_{A}\cong C(G).$$
For $G=SU(2)$ or $SO(3)$, a connection $A$ is irreducible if and only if it admits no nontrivial covariantly constant $\mathfrak{g}_{P}$-value $0$-form; equivalently,
$$\ker d_{A}\cap \Om^{0}(M,\mathfrak{g}_{P})=\{0\}.$$
We now proceed to characterize the equality case in Theorem \ref{T2}.
\begin{proof}[\textbf{Proof of Theorem \ref{T2}}]
Assume that the equality $Y(g)=2\sqrt{6}\|W^{+}_{g}\|_{L^{2}}$ holds. As shown in the conclusion of the proof of Theorem \ref{T1}, this forces the solution $B$ to satisfy
\[[B_{\bullet}B]=0\quad and\quad \na_{A}B=0.\]
Together with the hypothesis $B\not\equiv0$, Lemma \ref{L4} then implies that $\mathrm{rank}(B)=1$ everywhere.	

Since $B$ has constant rank one, we can write  (cf. \cite[Lemma 4.3.25]{DK90})
$$B=\xi\otimes\w,$$
where $\xi\in\Om^{0}(M,\mathfrak{g}_{P})$ is a unit section, i.e $\langle\xi,\xi\rangle=1$, and $\w\in\Om^{2,+}(M)$. The condition $\na_{A}B=0$ becomes
\begin{equation}\label{E13}
0=\na_{A}(\xi\otimes\w)=\na_{A}\xi\wedge\w+\xi\otimes \na\w.
\end{equation}
Taking the inner product with $\xi$ and using $\langle\xi,\xi\rangle=1$, we obtain $\langle\xi,d_{A}\xi\rangle=0$. Hence pairing (\ref{E13}) with $\xi$ gives
\begin{equation*}
0=\langle\na_{A}\xi,\xi\rangle\wedge\w+\langle\xi,\xi\rangle\na\w=\na\w.
\end{equation*}
Thus $\w$ is a non‑zero, parallel self‑dual two-form on $(M,g)$. The existence of a non-degenerate, parallel self-dual two-form $\w$ on a four-manifold is a well-known characterization of a Kähler surface (cf. \cite[Theorem 3.3]{Gur00}). Moreover, the remaining part of (\ref{E13}) then forces 
$$\na_{A}\xi=0,$$ 
which means that the connection $A$ is reducible. 

Conversely, suppose $(M,g)$ is a K\"{a}hler manifold of non-negative scalar curvature. Then by a result of Gursky \cite[Theorem 3.3 (ii)]{Gur00}, the Yamabe constant satisfies $Y(g)=2\sqrt{6}\|W^{+}_{g}\|_{L^{2}}$ (or see \cite{De83}). This proves the equivalence stated in Theorem \ref{T2}.
\end{proof}

We now specialize further to the case where the underlying manifold $M$ is a compact Kähler surface. 
Let $\w$ denote the Kähler form. In a local orthonormal frame, any self-dual two-form $B$ can be decomposed as 
\[ B=\b-\b^{\ast}+B_{0}\w,\]
where $\b\in\Om^{2,0}(\mathfrak{g}_{P})$, $B_{0}\in\Om^{0}(\mathfrak{g}_{P})$. 
Setting $\gamma=-\sqrt{-1}B_{0}$, this allows us to reformulate the Vafa--Witten equations in terms of holomorphic data; 
see \cite[Chapter 7]{Mar10} for a comprehensive discussion.

\begin{theorem}(\cite[Theorem 7.1.2]{Mar10} )\label{T4}
	On a closed K\"{a}hler surface, the Vafa--Witten equations are equivalent to the following system 
	for a triple $(A,\b,\gamma)\in\mathcal{A}_{P}^{1,1}\times\Om^{2,0}(M,\mathfrak{g}_{P})\times\Om^{0}(M,\mathfrak{g}_{P})$:
	\begin{equation}
	\left\{
	\begin{split}
	&\sqrt{-1}\w\wedge F_{A}+\frac{1}{2}[\b\wedge\b^{\ast}]=0,\\
	&\bar{\pa}_{A}\b=d_{A}\gamma=0,\\
	&[\gamma,\gamma^{\ast}]=[\gamma,\b+\b^{\ast}]=0.
	\end{split}
	\right.
	\end{equation}
\end{theorem}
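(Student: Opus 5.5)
The plan is to prove the equivalence by splitting both Vafa--Witten equations into their components along the Kähler type decomposition
\[
\Lambda^{2,+}\otimes\mathbb{C}=\Lambda^{2,0}\oplus\Lambda^{0,2}\oplus\mathbb{C}\,\omega .
\]
This splitting will be exact in one direction (the holomorphic system implies the Vafa--Witten equations). In the other direction it will require an integration-by-parts argument to decouple the components.

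\emph{Step 1: algebraic splitting.} Write $B=\beta-\beta^{\ast}+B_{0}\omega$ with $\gamma=-\sqrt{-1}B_{0}$. I will compute $[B_{\bullet}B]$ using the identification $\Lambda^{2,+}\cong\mathfrak{so}(3)$. In this identification $\omega$ acts on $\Lambda^{2,0}\oplus\Lambda^{0,2}$ by multiplication by $\pm 2\sqrt{-1}$ (up to normalisation), and $[\beta,\bar\beta]_{\Lambda^{2,+}}$ is a multiple of $|\cdot|$-pairing times $\omega$. This should give, up to constants fixed by (\ref{E17}),
\[
[B_{\bullet}B]=c_{1}\,\Lambda_{\omega}[\beta\wedge\beta^{\ast}]\,\omega+c_{2}\bigl([\gamma,\beta]-[\gamma,\beta^{\ast}]\bigr).
\]
On the curvature side, $F_{A}^{+}=F_{A}^{2,0}+F_{A}^{0,2}+\tfrac12(\Lambda_{\omega}F_{A})\omega$.

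Hence the second equation of (\ref{E8}) splits into two pieces:
\begin{enumerate}[(a)]
\item the $\omega$-component, which after matching constants is $\sqrt{-1}\,\omega\wedge F_{A}+\tfrac12[\beta\wedge\beta^{\ast}]=0$ (up to an additional $[\gamma,\gamma^{\ast}]$ term);
\item the $(0,2)$-component $F_{A}^{0,2}=c\,[\gamma,\beta^{\ast}]$, together with its conjugate.
\end{enumerate}

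\emph{Step 2: the first-order equation.} Since $B$ is self-dual, $d_{A}^{+,\ast}B=0$ is equivalent to $d_{A}B=0$. Using $d\omega=0$, its $(2,1)$-part reads
\[
\bar\partial_{A}\beta+\partial_{A}B_{0}\wedge\omega=0,
\]
and its $(1,2)$-part is the conjugate equation. The converse direction (holomorphic system $\Rightarrow$ Vafa--Witten) is then immediate:
\begin{enumerate}[(i)]
\item $\bar\partial_{A}\beta=0$ and $d_{A}\gamma=0$ give $d_{A}B=0$;
\item $A\in\mathcal{A}^{1,1}_{P}$ together with $[\gamma,\beta+\beta^{\ast}]=0$ gives the $(0,2)$-component;
\item $[\gamma,\gamma^{\ast}]=0$ together with the moment map equation gives the $\omega$-component.
\end{enumerate}

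\emph{Step 3: decoupling, the main obstacle.} The harder direction is showing that a solution of the coupled system has $F_{A}^{0,2}=0$, $\bar\partial_{A}\beta=0$, $d_{A}\gamma=0$ and the two commutator conditions separately. I would first try a Weitzenböck/integration identity, in three moves.
\begin{enumerate}[(1)]
\item Apply $\bar\partial_{A}$ to $\bar\partial_{A}\beta=-\partial_{A}B_{0}\wedge\omega$. The left side gives $\bar\partial_{A}^{2}\beta=[F^{0,2}_{A},\beta]$, while the right side gives $\bar\partial_{A}\partial_{A}B_{0}\wedge\omega$.
\item Pair the resulting identity with $B_{0}$ (or equivalently integrate $\|\bar\partial_{A}\beta\|^{2}=-\langle\partial_{A}B_{0}\wedge\omega,\bar\partial_{A}\beta\rangle$ by parts). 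The Kähler identities then express $\int|\partial_{A}B_{0}|^{2}$ through $\int\langle[\Lambda_{\omega}F_{A},B_{0}],B_{0}\rangle$ and $\int\langle[F^{0,2}_{A},\beta],\bar B_{0}\rangle$.
\item Substitute the curvature equations from Step 1. The terms should reorganise into
\[
\|\bar\partial_{A}\beta\|^{2}+\|d_{A}\gamma\|^{2}+c\bigl(\|[\gamma,\gamma^{\ast}]\|^{2}+\|[\gamma,\beta+\beta^{\ast}]\|^{2}\bigr)=0
\]
with positive constants $c$, which forces each piece to vanish. Then $F^{0,2}_{A}=c[\gamma,\beta^{\ast}]=0$, so $A\in\mathcal{A}^{1,1}_{P}$.
\end{enumerate}

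The delicate point is keeping the normalisation constants of (\ref{E17}) and (\ref{E1}) consistent so that all signs come out positive. If the direct computation is messy, I would instead use the Bochner identity of Corollary \ref{C1} on the Kähler surface: there $W^{+}$ acts diagonally, with eigenvalue $R/6$ on $\omega$ and $-R/12$ on $\Lambda^{2,0}\oplus\Lambda^{0,2}$, and one can split $|\nabla_{A}B|^{2}$ by type to read off the same sum of squares.
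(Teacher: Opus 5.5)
The paper gives no proof of Theorem~\ref{T4}; it simply quotes Mares, so there is nothing internal to compare with. Your outline is the standard argument: type decomposition, the immediate converse, and then breaking the coupling $\bar\partial_A\beta=-\partial_AB_0\wedge\omega$ by a K\"ahler-identity integration by parts. It does close.

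However, the step you leave as ``should reorganise'' is the whole content of the hard direction, and its sign is not bookkeeping: the argument works for exactly one sign of the bracket. Take the convention in which $\omega$ acts on $\Lambda^{2,0}$ by $+2\sqrt{-1}$. This is the convention for which the Weitzenb\"ock curvature term on $\Omega^{2,+}(M,\mathfrak{g}_P)$ produces the \emph{positive} $|F_A^+|^2$ term in Corollary~\ref{C1}. With it, (\ref{E17}) gives $F_A^{2,0}=-\tfrac{\sqrt{-1}}{4}[B_0,\beta]$. The identity $[\Lambda_\omega,\bar\partial_A]=-\sqrt{-1}\,\partial_A^{\ast}$, together with $\Lambda_\omega\beta=0$ and ad-invariance ($B_0$ being real), then gives
\[
\langle\bar\partial_A\beta,\partial_AB_0\wedge\omega\rangle_{L^2}
=-\sqrt{-1}\,\langle\beta,[F_A^{2,0},B_0]\rangle_{L^2}
=\tfrac14\|[B_0,\beta]\|_{L^2}^2 .
\]
Since $\bar\partial_A\beta=-\partial_AB_0\wedge\omega$, the left side also equals $-\|\bar\partial_A\beta\|_{L^2}^2$. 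Hence $\bar\partial_A\beta=0$ and $[B_0,\beta]=0$. Because $\omega\wedge$ is injective on $1$-forms, also $\partial_AB_0=0$, so $d_AB_0=0$ and $F_A^{0,2}=0$. With the opposite bracket sign you would only get $\|\bar\partial_A\beta\|^2=\frac14\|[B_0,\beta]\|^2$, which decouples nothing. So you must pin the convention to Corollary~\ref{C1} rather than treat it as normalisation.

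There are also three smaller corrections.
\begin{itemize}
\item In the reduced system there is no $[\gamma,\gamma^{\ast}]$ term. Indeed $[\omega,\omega]_{\Lambda^{2,+}}=0$, and $\gamma=-\sqrt{-1}B_0$ with $B_0$ real satisfies $[\gamma,\gamma^{\ast}]=0$ identically. That condition only has content in Mares's setting, where $\gamma$ also absorbs $C$.
\item The $\langle[\Lambda_\omega F_A,B_0],B_0\rangle$ term you anticipate vanishes pointwise by ad-invariance.
\item Your fallback through Corollary~\ref{C1} does not work. On a K\"ahler surface $\frac13R_g|B|^2-2\langle B,W_g^+\star B\rangle=\frac{R_g}{2}|\beta-\beta^{\ast}|^2$, so it only yields $\int_M(|\nabla_AB|^2+\frac{R_g}{2}|\beta-\beta^{\ast}|^2+8|F_A^+|^2)=0$. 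This has no sign unless $R_g\ge0$, which is exactly why Proposition~\ref{P5} assumes $R_g>0$, and it cannot decouple the equations in general.
\end{itemize}
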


Mares also explored the relationship between the existence of solutions to these equations and the algebro-geometric notion of stability for vector bundles, as detailed in \cite{Mar10,Tan15}. The following proposition shows that on a Kähler surface with positive scalar curvature, any nontrivial Vafa--Witten solution must have $B=0$ if the connection $A$ is irreducible.

\begin{proposition}\label{P5}
	Let $(M,g)$ be a closed K\"{a}hler surface with K\"{a}hler metric $g$. Let $(A,B)$ be a solution of the Vafa--Witten equations on $(M,g)$.
	 Suppose the scalar curvature of $g$ is positive. Then $(A,B)$ satisfies 
	$$F_{A}^{+}=0,\quad \b=0,\quad d_{A}\gamma=0.$$
	Moreover, if $A$ is an irreducible connection, then $B=0$. 
\end{proposition}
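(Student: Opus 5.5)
On a Kähler surface I would work with the holomorphic reformulation of Theorem \ref{T4} and the Weitzenböck formula for $\beta$, combining them into an integral identity.

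\textbf{Step 1: curvature term.} Write $B=\beta-\beta^{\ast}+B_0\omega$ as above. On a Kähler surface $W^{+}_{g}$ is determined by the scalar curvature. It acts on $\Lambda^{2,+}=\mathbb{R}\omega\oplus\Lambda^{2,0\oplus0,2}_{\mathbb{R}}$ with eigenvalue $R/6$ on $\omega$ and $-R/12$ on the $(2,0)\oplus(0,2)$ part. So in Corollary \ref{C1} the curvature term $\frac13R|B|^2-2\langle B,W^+\star B\rangle$ splits into two pieces. On the $\omega$-component the coefficient is $\frac13R-\frac13R=0$. On the $\beta$-component it is $\frac13R+\frac16R=\frac12R$, up to the paper's normalizations.

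\textbf{Step 2: integrate.} Integrating the identity of Corollary \ref{C1} over $M$ gives
\[
0=\int_M\Big(|\nabla_AB|^2+c\,R\,|\beta|^2+8|F_A^+|^2\Big)\,d\mathrm{vol}_g
\]
for some constant $c>0$. Since $R>0$, every term is nonnegative. Hence $\beta\equiv0$, $F_A^+=0$ and $\nabla_AB=0$.

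As a cross-check, one can instead use the Kähler identity $\bar\partial_A^{\ast}\bar\partial_A$ on $K_M\otimes\mathfrak{g}_P$. From $\bar\partial_A\beta=0$ one gets a Weitzenböck formula $0=\|\nabla''\beta\|^2+\int\frac{R}{2}|\beta|^2+\int\langle[\sqrt{-1}\Lambda F_A,\beta],\beta\rangle$. The first equation of Theorem \ref{T4} rewrites $\Lambda F_A$ in terms of $[\beta\wedge\beta^{\ast}]$, and the last term becomes $\|[\beta\wedge\beta^{\ast}]\|^2\ge0$ with the right sign. This gives $\beta=0$ again, and then the first equation gives $\omega\wedge F_A=0$, i.e. $F_A^+=0$.

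\textbf{Step 3: the remaining component.} With $\beta=0$, the equations of Theorem \ref{T4} reduce to $d_A\gamma=0$, which is the second assertion. If $A$ is irreducible, then $\ker d_A\cap\Omega^0(M,\mathfrak{g}_P)=0$ for $SU(2)$ or $SO(3)$, so $\gamma=0$. Combined with $\beta=0$, this gives $B=0$. For general $G$, I would read ``irreducible'' as meaning exactly this kernel condition.

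\textbf{Main obstacle.} The difficulty is the sign and normalization bookkeeping in Step 1. I must confirm that the $\gamma$-component carries zero curvature coefficient and the $\beta$-component carries exactly a positive multiple of $R$. I must also check that the $8|F_A^+|^2$ term, or the bracket term in the alternative Weitzenböck formula, enters with a nonnegative sign under the conventions \eqref{E1} and \eqref{E17}. I would verify the eigenvalues by testing on the flat torus and on $\mathbb{CP}^2$ with the Fubini--Study metric.
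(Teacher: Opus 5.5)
Your argument is correct, but your main route differs from the paper's; your ``cross-check'' is essentially the paper's proof. The paper does not use Corollary \ref{C1} here. It applies a Weitzenb\"ock formula to $\beta$ as a section of $K_M\otimes\mathfrak{g}_P$. It then uses the holomorphic system of Theorem \ref{T4} ($\bar\partial_A\beta=0$ and the moment-map equation) to turn $\langle[\sqrt{-1}\Lambda_\omega F_A,\beta],\beta\rangle$ into $2\|[\beta\wedge\beta^{\ast}]\|^2_{L^2}$, which gives $\beta=0$ and $\Lambda_\omega F_A=0$. To reach $F_A^+=0$ it relies on $A\in\mathcal{A}^{1,1}_P$, and it reads off $d_A\gamma=0$ from Theorem \ref{T4}.

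You instead apply the real four-dimensional identity of Corollary \ref{C1} to all of $B$ and insert the K\"ahler eigenstructure of $W^+_g$. This gets you more from less. $F_A^+=0$ comes straight from the $8|F_A^+|^2$ term, with no $(1,1)$ condition. Your identity also yields $\nabla_AB=0$, so with $\beta=0$ and $\nabla\omega=0$ you get $d_A\gamma=0$ without the decoupling of Theorem \ref{T4}; your Step 3 does not actually need to invoke it.

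Your route also shows why the $\gamma$-component survives: its curvature coefficient is exactly zero. Equivalently, the K\"ahler form attains equality in Lemma \ref{L2} when $R\ge0$, which is the pointwise mechanism behind the equality case of Theorem \ref{T2}. The paper's route involves only $\beta$ and the holomorphic structure and never needs the spectrum of $W^+_g$. That is the form of the argument that makes sense outside real dimension four, where the self-dual decomposition you use is unavailable.

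Your normalization worry can be settled inside the paper. Apply Lemma \ref{L1} to $\xi\otimes\omega$, with $\xi$ a constant section of a trivial bundle with the product connection. This is a parallel harmonic form whose bracket term vanishes, so the lemma forces $\frac13R|\omega|^2=2\langle\omega,W^+_g\star\omega\rangle$ in the paper's own normalization. Since $W^+_g$ is trace-free and acts as a scalar on $(\Lambda^{2,0}\oplus\Lambda^{0,2})_{\mathbb{R}}$, the coefficient on the $\beta$-component is then $\frac13R+\frac16R=\frac12R$, as you claimed.
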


\begin{proof}
	Combining the Weitzenböck formula from \cite[Proposition 2.2]{Hua24} (cf. \cite[Proposition 2.3]{Ito87}) with the identities in Theorem \ref{T4},  we obtain the following identity for $\b$:
	\begin{equation*}
	\begin{split}
	0&=\int_{M}\langle\na_{A}^{\ast}\na_{A}\b,\b\rangle+2\int_{M}R_{g}|\b|^{2}+\int_{M}\langle[\sqrt{-1}\La_{\w}F_{A},\b],\b\rangle\\
	&=\|\na_{A}\b\|_{L^2}^{2}+2\int_{M}R_{g}\b^{2}+2\|[\b\wedge\b^{\ast}]\|^{2}_{L^2}.
	\end{split}
	\end{equation*}
	Since the scalar curvature is positive, all terms are non-negative; therefore $\b=0$ and $\La_{\w}F_{A}=0$. 
	
	If in addition $A$ is irreducible, then the equation  $d_{A}\gamma=0$ forces $\gamma=0$. Consequently, we obtain $B=0$. 
\end{proof}

We conclude this section with the application to Einstein manifolds.
\begin{proof}[\textbf{Proof of Theorem \ref{T3}}]
Let $(M,g)$ be a closed, oriented four‑dimensional Einstein manifold with positive scalar curvature, normalized so that $\mathrm{Ric}(g)=3g$. For any such metric, the Chern–Gauss–Bonnet and signature formulas give
\begin{equation}\label{E14}
2\pi^{2}(2\chi(M)+3\sigma(M))=\int_{M}|W^{+}_{g}|^{2}dvol_{g}+\frac{1}{48}\int_{M}|R_{g}|^{2}dvol_{g}.
\end{equation}
By Theorem \ref{T1}, any Vafa–Witten solution $(A,B)$ with $B\not\equiv0$ satisfies
\begin{equation}\label{E15}
\int_{M}|W^{+}|^{2}\geq\frac{4}{3}\pi^{2}(\chi(M)+3\sigma(M)).
\end{equation}
Inserting (\ref{E15}) into (\ref{E14}) and using the normalization $R_{g}=12$, we obtain
\begin{equation*}
\begin{split}
2\pi^{2}(2\chi(M)+3\sigma(M))&\geq\frac{4}{3}\pi^{2}(\chi(M)+3\sigma(M))\\
&+\frac{1}{48}\int_{M}144dvol_{g}.\\
\end{split}
\end{equation*}
Consequently,
\begin{equation}\label{E9}
\frac{2}{9}\pi^{2}(2\chi(M)+3\sigma(M))\geq vol(g).
\end{equation}
Equality in (\ref{E9}) holds if and only if $Y(g)=2\sqrt{6}|W^{+}_{g}|_{L^{2}}$. By Theorem \ref{T2}, this would imply that $(M,g)$ is Kähler and that $A$ is irreducible, contradicting the hypothesis. Hence the inequality is strict. 

Moreover, if $(M,g)$ is K\"{a}hler, then by Proposition \ref{P5}, we would obtain  $B=0$, which again contradicts the assumption $B\not\equiv0$. Therefore, $M$ cannot be K\"{a}hler. This completes the proof of Theorem \ref{T3}.
\end{proof}

\subsection{Stable flat connections on $3$-manifold}\label{sec:dimred} 

In this subsection we establish a correspondence between stable flat connections on a three-manifold $N$ 
and $S^{1}$-invariant solutions of the Vafa--Witten equations on the product four-manifold $M:=S^{1}\times N$. 
Using the estimates developed in the previous sections, we derive geometric constraints on the Yamabe constant of $g_{M}=dt^{2}+g_{N}$ in terms of the Ricci curvature of $g_{N}$.

Let $(N,g_{N})$ be an oriented, closed, smooth $3$-dimensional manifold with smooth Riemannian metric $g$,
 and let $P\rightarrow N$ be a principal $G$-bundle with  compact Lie group $G$. 
 Denote by $\mathcal{A}_{P}$ the space of all connections on $P$, 
 and by $\Om^{k}(N,\mathfrak{g}_{P})$ the space of  $\mathfrak{g}_{P}$-valuled $k$-forms. 
 For a complex connection $\mathcal{A}:=A+\sqrt{-1}\phi$, where $A$ is a connection on $P$ and $\phi\in\Om^{1}(N,\mathfrak{g}_{P})$, 
 the curvature is defined as
\[\mathcal{F}_{\mathcal{A}}=F_{A}-\frac{1}{2}[\phi\wedge\phi]+\sqrt{-1}d_{A}\phi,\]
which is a $2$-form taking values in $P\times_{G}(\mathfrak{g}_{P}^{\C})$.

\begin{definition}(cf. \cite{Cor88,GU12})
	A pair $(A,\phi)$ is called a \textit{stable flat connection}, if it satisfies the elliptic system 
	\begin{equation*}
	\left\{
	\begin{split}
	&F_{A}-\phi\wedge\phi=0,\\
	&d_{A}\phi=d_{A}^{\ast}\phi=0.
	\end{split}
	\right.
	\end{equation*}
\end{definition}
These equations are invariant under both the real gauge group $\mathcal{G}_{P}=C^{\infty}(P\times_{G}G)$ 
and  the complex gauge group $\mathcal{G}^{\C}_{P}:=C^{\infty}(P\times_{G}G_{\C})$. 
On a compact Riemann surface $\Sigma$, solutions to (\ref{E11}) coincide with those of Hitchin's equations (cf. \cite{Hit87}). 
Moreover, by \cite[Proposition 2.2.3]{DK90} (see also \cite[Proposition 1.2.6]{Kob87}), 
the gauge equivalence classes of flat connections on a connected manifold $M$ are in one-to-one correspondence 
with the conjugacy classes of representations $\pi_{1}(M)\rightarrow G$. 

Now return to the setting of this article. Let $N$ be an oriented, smooth, Riemannian three-manifold. Consider the product manifold  $M:=S^{1}\times N$ endowed with the product metric $g_{M}=dt^{2}+g_{N}$. Let
$$p_{1}:S^{1}\times N\rightarrow N$$
be the canonical projection, and denote by $p_{1}^{\ast}(P)\rightarrow M$ the pullback bundle. For a connection $A$ on $P\rightarrow N$, we pull it back to a connection on $p_{1}^{\ast}(P)\rightarrow M$ , still denoted by $A$ by abuse of notation. Define a section $B\in\Ga(M,\Om^{2,+}\otimes\mathfrak{g}_{p^{\ast}_{1}(P)})$ by
\begin{equation}\label{E16}
B=(1+\ast^{M})\ast^{N}p_{1}^{\ast}(\phi),
\end{equation}
where $\ast^{N}$ and $\ast^{M}$ denote the Hodge star operators with respect to $g_{N}$ and $g_{M}$, respectively. The following proposition establishes a one‑to‑one correspondence between stable flat connections on$N$ and $S^{1}$-invariant solutions of the Vafa–Witten equations on $M$.
\begin{proposition}(\cite[Proposition 2.3]{Hua22})\label{P6}
	The canonical projection $p_{1}:S^{1}\times N\rightarrow N$ gives a one-to-one correspondence between stable flat  connections $(A,\phi)$ on $P$ and $S^{1}$-invariant solutions $(A,B)$ of the Vafa-Witten equations on the pullback bundle $p^{\ast}_{1}(P)\rightarrow M$, where $B$ is give by (\ref{E16}).
\end{proposition}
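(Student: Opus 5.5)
We verify the correspondence of Proposition \ref{P6} directly, computing in local coordinates on $M=S^{1}\times N$ adapted to the product metric $g_{M}=dt^{2}+g_{N}$. The map is $(A,\phi)\mapsto (p_{1}^{\ast}A,B)$ with $B$ given by (\ref{E16}).

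\textbf{Forward direction.} Let $(A,\phi)$ be a stable flat connection, and write $\psi:=\ast^{N}\phi$, a $\mathfrak{g}_{P}$-valued $2$-form on $N$.

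First I record the Hodge stars. With $\mathrm{vol}_{M}=dt\wedge \mathrm{vol}_{N}$ and up to an orientation sign fixed once, we have $\ast^{M}\psi=dt\wedge\ast^{N}\psi=\pm\,dt\wedge\phi$. Hence
\[
B=\psi+\ast^{M}\psi=\ast^{N}\phi\pm dt\wedge\phi ,
\]
which is manifestly self-dual and $t$-independent.

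Next I check the divergence equation. Since $B$ is self-dual, $d_{A}^{+,\ast}B=0$ is equivalent to $d_{A}B=0$. Because nothing depends on $t$ and $A$ has no $dt$-component, I expect
\[
d_{A}B=d_{A}^{N}(\ast^{N}\phi)\mp dt\wedge d_{A}^{N}\phi .
\]
This vanishes exactly when $d_{A}^{\ast}\phi=0$ and $d_{A}\phi=0$.

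Then I check the curvature equation. The pulled-back curvature $F_{A}$ has no $dt$ part, so
\[
F_{A}^{+}=\tfrac12\bigl(F_{A}+\ast^{M}F_{A}\bigr)=\tfrac12\bigl(F_{A}\pm dt\wedge\ast^{N}F_{A}\bigr).
\]
I then compute $[B_{\bullet}B]$ from (\ref{E17}) using the standard identification $\Lambda^{2,+}\cong\Lambda^{1}(N)$, under which $B\leftrightarrow\phi$ and the $\Lambda^{2,+}$ bracket becomes the cross product on $T^{\ast}N$, i.e. $\ast^{N}(\cdot\wedge\cdot)$. This should give $[B_{\bullet}B]=c\,(1+\ast^{M})\,[\phi\wedge\phi]$ for a universal constant $c$. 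The equation $F_{A}^{+}+\frac18[B_{\bullet}B]=0$ then becomes $F_{A}=\lambda\,\phi\wedge\phi$ for a fixed normalization constant $\lambda$.

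\textbf{Main obstacle.} This constant bookkeeping, together with the sign from the orientation convention, is the step I expect to be the main obstacle. I will fix it by checking in an orthonormal frame $e^{1},e^{2},e^{3}$ for a single pair: take $\phi=\xi_{1}e^{1}+\xi_{2}e^{2}$ and compare the coefficients of $e^{12}$ on both sides. The goal is to confirm that the conventions of \cite{Hua22,Tan17} give exactly $F_{A}-\phi\wedge\phi=0$, in agreement with (\ref{E11}). If the constant comes out differently, I will rescale $\phi$; the rescaling is harmless for a bijection statement.

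\textbf{Converse.} Let $(\tilde A,B)$ be an $S^{1}$-invariant Vafa--Witten solution. Write $\tilde A=A+a\,dt$ with $a\in\Omega^{0}(N,\mathfrak{g}_{P})$; any $S^{1}$-invariant self-dual form has the form $B=(1+\ast^{M})\ast^{N}\phi$ for a unique $\phi$.

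I first need to show that $a$ can be taken to be zero. Expanding $d_{\tilde A}B=0$ in $dt$-components produces the term $[a,\ast^{N}\phi]$. Expanding the curvature equation produces a $dt\wedge(\cdot)$ component involving $d_{A}a$ and $[a,\phi]$. I will use the integral Bochner-type identity (Corollary \ref{C1}), or equivalently integrate $|d_{A}a|^{2}+|[a,\phi]|^{2}$ by parts over $N$, to conclude that $d_{A}a=0$ and $[a,\phi]=0$. In that case $a$ decouples: it is a covariantly constant Higgs field commuting with $\phi$, and it can be removed by an $S^{1}$-dependent gauge transformation $\exp(-ta)$ or absorbed into holonomy, which I would note as part of the identification of $S^{1}$-invariant data. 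With $a=0$, the forward computation reverses verbatim.

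Finally, injectivity is immediate because $B$ determines $\phi$ by restriction to $N$, and the construction is evidently gauge-equivariant.
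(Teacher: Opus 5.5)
Your forward computation is correct, and for the statement as phrased it is essentially the whole proof. The paper gives no argument of its own here; it only cites \cite{Hua22}, which is presumably this same direct check.

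Why it suffices: the statement concerns pairs whose connection on $M$ is $p_1^{\ast}A$ (the same letter $A$ appears on both sides), and each of your steps is an equivalence. First, $d_AB=0$ holds iff $d_A^{\ast}\phi=d_A\phi=0$. Second, $F_A^{+}+\frac18[B_{\bullet}B]=0$ holds iff $F_A=\lambda\,\phi\wedge\phi$, because $1+\ast^{M}$ is injective on forms pulled back from $N$. Third, every $S^1$-invariant self-dual $B$ has the form (\ref{E16}). So the converse and injectivity follow at once.

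Two caveats on your normalisation step. Real rescaling of $\phi$ can only absorb a positive factor, so what you must confirm is $\lambda>0$. That sign is fixed by the same bracket convention in (\ref{E17}) under which Corollary~\ref{C1} carries $+8|F_A^+|^2$. Indeed, reducing Corollary~\ref{C1} on $S^1\times N$ and comparing it with the Weitzenb\"ock formula for $d_Ad_A^{\ast}+d_A^{\ast}d_A$ on $\Omega^{1}(N,\mathfrak{g}_P)$ forces $\lambda>0$. Also, since (\ref{E16}) fixes $B$, a leftover positive factor would change the map in the statement rather than be harmless.

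The genuine error is in your Converse paragraph, where you allow $\tilde A=A+a\,dt$.

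Getting $d_Aa=0$ and $[a,\phi]=0$ is possible, but not from Corollary~\ref{C1}, which does not isolate $a$. Use the orientation $dt\wedge\mathrm{vol}_N$. The $dt$-part of $d_{\tilde A}B=0$ gives $d_A\phi=\ast[a,\phi]$; this, not the curvature equation, is where $[a,\phi]$ enters. The curvature equation becomes $F_A-\lambda\,\phi\wedge\phi=\ast d_Aa$. Pairing this with $\ast d_Aa$ and using the Bianchi identity, $d_A\phi=\ast[a,\phi]$ and ad-invariance gives $\|d_Aa\|_{L^2}^2+c\lambda\|[a,\phi]\|_{L^2}^2=0$ with $c>0$. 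This is coercive exactly when $\lambda>0$.

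What fails is the next step. $\exp(-ta)$ is a gauge transformation of $p_1^{\ast}P$ only if $\exp(-Ta)=1$, where $T$ is the length of $S^1$. The holonomy of $\tilde A$ around the fibre $S^1\times\{x\}$ is $\exp(\pm Ta(x))$ up to conjugation. This is a gauge invariant, and it is trivial for every pulled-back connection, so $a$ can be neither gauged away nor ``absorbed into holonomy.''

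Concretely, take the trivial $SU(2)$-bundle over a flat $T^3$ with $A$ the product connection. Set $\phi=\sigma\,dx^1$ and $a=s\sigma$, with $\sigma\in\mathfrak{su}(2)$ constant and $s\in\mathbb{R}$ generic. Then $(d+s\sigma\,dt,\ \ast^{N}\phi+dt\wedge\phi)$ is an $S^1$-invariant Vafa--Witten solution that is not gauge equivalent to any pulled-back pair.

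So under your broad reading the correspondence is false. It holds for connections with no $dt$-component, which is what the statement means. (For $G=SU(2)$ or $SO(3)$ with $A$ irreducible, $d_Aa=0$ forces $a=0$ anyway.) You should drop the gauge-transformation step and state the converse for that class only.
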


We now apply the main estimate from Theorem \ref{T1} to the product manifold $M=S^{1}\times N$.
\begin{proof}[\textbf{Proof of Theorem \ref{T5}}]
	By Proposition \ref{P6}, a stable flat connection \((A, \phi)\) on \(N\) with \(\phi\not\equiv0\) gives rise to a nontrivial \(S^1\)-invariant solution \((A, B)\) of the Vafa--Witten equations on \(M = S^1 \times N\).  Since the solution is \(S^1\)-invariant and nontrivial, Theorem \ref{T1} applies and yields
	\[
	Y(g_{M}) \leq 2\sqrt{6} \, \| W_{g_{M}}^{+} \|_{L^2(M)}.
	\]
	On the product manifold \(S^1 \times N\) equipped with the product metric $g_{M}=dt^{2}+g_{N}$, 
	the self-dual Weyl tensor \(W_g^+\) can be expressed in terms of the Ricci curvature of the three-dimensional factor. 
	More precisely, one has the pointwise identity (see the Appendix)
	\[
	|W_{g_{M}}^{+}|^{2}= \frac{1}{4}\left| \operatorname{Ric}(g_N) - \frac{1}{3} R_{g_N} g_N \right|^{2}.
	\]
	Integrating over \(M\) and using the \(S^1\)-invariance, we obtain
	\[
	\int_{M} | W_{g_{M}}^{+}|^{2}dvol_{g_{M}}=\frac{1}{2}\mathrm{Vol}(S^{1})\int_{N}\left|\operatorname{Ric}(g_N) - \frac{1}{3} R_{g_N} g_N \right|^{2}dvol_{g_{N}}.
	\]
	Substituting this into the previous inequality yields the desired bound on the Yamabe constant. 
	
	In particular, if $g_{N}$ is Einstein, then $\operatorname{Ric}(g_N) =\frac{1}{3}R_{g_N} g_N $. Consequently, $Y(g_{M})\leq 0$.
	 This completes the proof of Theorem \ref{T5}. 
\end{proof}

\section{Eigenvalue Gap and Compactness of Vafa-Witten Solutions}

The operator $d_{A}^{+}d^{+,\ast}_{A}$ is an elliptic self-adjoint operator on the space of sections of $\Om^{2,+}(M,\mathfrak{g}_{P})$. 
A standard result states that its eigenvalues are discrete and the smallest eigenvalue is nonnegative.
\begin{definition}
Let $(M,g)$ be a closed, four-dimensional Riemannian
manifold and $P\rightarrow M$ be a principal $G$-bundle with $G$ being a compact Lie group. Let $A$ be a smooth connection on $P$. 
The least eigenvalue of  $d_{A}^{+}d_{A}^{+,\ast}$ is defined as
\end{definition}
\begin{equation*}
\mu(A):=\inf_{v\in\Om^{2,+}(M,\mathfrak{g}_{P})\backslash\{0\}}\frac{\|d^{+,\ast}_{A}v\|_{L^2}^{2}}{\|v\|_{L^2}^{2}}.
\end{equation*}
An anti-self dual(ASD) connection $A_{asd}$ is  called \textit{regular} if 
$$\ker d^{+,\ast}_{A_{asd}}\cap\Om^{2,+}(M,\mathfrak{g}_{P})=\{0\},$$ 
i.e. $\mu(A_{asd})>0$.

We now recall a key a compactness theorem due to Sedlacek.

\begin{theorem}(cf. \cite[Theorem 4.3]{Sed82})\label{T7}
Let $(M,g)$ be a closed, four-dimensional Riemannian manifold and $P\rightarrow M$ be a principal $G$-bundle with $G$ being a compact Lie group. If $\{A_{i}\}_{i\in\mathbb{N}}$ is a sequence $C^{\infty}$ connection on $P$ such that $\{F^{+}_{A_{i}}\}_{i\in\N}$ converges to zero in $L^{2}$-topology, then there exists\\
(1) an integer $L$ and a finite set of points, $\Sigma=\{x_{1},\ldots,x_{L}\}\subset M$, $\Sigma$ maybe empty;\\
(2) a smooth anti-self-dual $A_{\infty}$ on a principal $G$-bundle $P_{\infty}$ over $M$ with $\eta(P_{\infty})=\eta(P)$;\\
(3) a subsequence $\Xi\subset\N$, we also denote by $\{A_{i}\}_{i\in\Xi}$, a sequence gauge transformation $\{g_{i}\}_{i\in\Xi}$ such that, $g^{\ast}_{i}(A_{i})$ weakly converges to $A_{\infty}$ in $L^{2}_{1}$ on $M\backslash\Sigma$, and $g_{i}^{\ast}(F_{A_{i}})$ weakly converges to $F_{A_{\infty}}$ in $L^{2}$ on $M\backslash\Sigma$.
\end{theorem}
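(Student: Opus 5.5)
The plan is to reduce the statement to Uhlenbeck's weak compactness theorem together with her removable singularity theorem. The hypothesis $\|F^{+}_{A_i}\|_{L^2}\to0$ supplies both the uniform energy bound that compactness needs and the anti-self-duality of the limit.

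\textbf{Step 1: uniform energy bound.} By Chern--Weil theory,
\[
\|F^{-}_{A_i}\|^{2}_{L^2}-\|F^{+}_{A_i}\|^{2}_{L^2}=8\pi^{2}\kappa(P)
\]
(with the normalisation of (\ref{E1})), and the right-hand side is a topological constant independent of $i$. Hence
\[
\|F_{A_i}\|_{L^2}^{2}=8\pi^{2}\kappa(P)+2\|F^{+}_{A_i}\|^{2}_{L^2}
\]
is uniformly bounded, say by $K$.

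\textbf{Step 2: bubbling set and local gauges.} Fix Uhlenbeck's small-energy constant $\varepsilon_0$: on a geodesic ball $B$ with $\|F_A\|_{L^2(B)}<\varepsilon_0$ there is a Coulomb gauge with $d^{\ast}a=0$ and $\|a\|_{L^2_1(B)}\le C\|F_A\|_{L^2(B)}$.

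Call $x$ a concentration point if for every $r>0$,
\[
\limsup_i\|F_{A_i}\|^{2}_{L^2(B_r(x))}\ge\varepsilon_0^{2}.
\]
A covering argument, after passing to a diagonal subsequence, shows there are at most $L\le K/\varepsilon_0^{2}$ such points. This gives $\Sigma$.

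Away from $\Sigma$, cover an exhaustion of $M\backslash\Sigma$ by countably many small balls on which the energy eventually stays below $\varepsilon_0$. On each ball take Coulomb gauges; the local connection forms are then bounded in $L^2_1$ and converge weakly after passing to a subsequence.

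\textbf{Step 3: patching (the main obstacle).} The transition functions relating the local gauges are controlled in $L^2_2$ because the connection forms are bounded in $L^2_1$. By Rellich's theorem they converge in $L^2_1$, hence in $C^0$ after a further subsequence. One then patches them, following Uhlenbeck and Donaldson--Kronheimer \cite[\S4.4]{DK90}, into global gauge transformations $g_i$ on compact subsets, with a diagonal argument over the exhaustion.

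The delicate part is that we only have weak $L^2_1$ control and no elliptic equation for the $A_i$ themselves. The gluing must therefore be done using only $L^2_2$ control of the transition functions; the key point is that $L^2_2\hookrightarrow C^0$ in dimension four.

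\textbf{Step 4: properties of the limit.} The result is a connection $A_\infty$ on a bundle over $M\backslash\Sigma$ with $g_i^{\ast}A_i\rightharpoonup A_\infty$ in $L^2_{1,loc}$, and hence $g_i^{\ast}F_{A_i}\rightharpoonup F_{A_\infty}$ in $L^2_{loc}$. The quadratic term converges weakly by compact embeddings.

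Since the operation $F\mapsto F^{+}$ is linear and $L^2$-norms are weakly lower semicontinuous,
\[
\|F^{+}_{A_\infty}\|_{L^2(K)}\le\liminf_i\|F^{+}_{A_i}\|_{L^2}=0
\]
for every compact $K\subset M\backslash\Sigma$, so $A_\infty$ is ASD. In Coulomb gauge the equations $d^{\ast}a=0$ and $d^{+}a+(a\wedge a)^{+}=0$ form an elliptic system, and bootstrapping gives smoothness.

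Finally, $\|F_{A_\infty}\|_{L^2}\le\sqrt K<\infty$. Uhlenbeck's removable singularity theorem then extends $A_\infty$, after a gauge change near each $x_j$, to a smooth ASD connection on a bundle $P_\infty\to M$. Since $\Sigma$ has codimension four, $P_\infty$ and $P$ agree over the $2$-skeleton, so $\eta(P_\infty)=\eta(P)$. Only the instanton number can drop, through energy lost at $\Sigma$.
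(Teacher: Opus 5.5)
The paper gives no argument for this statement; it simply quotes Sedlacek. Your outline follows the standard Uhlenbeck--Sedlacek strategy. Steps 1 and 2, and the ASD/lower-semicontinuity part of Step 4, are fine. Step 3, however, which you rightly call the main obstacle, rests on a false claim.

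In dimension four, $L^2_2$ is exactly the borderline case of the Sobolev embedding ($kp=n$) and does \emph{not} embed in $C^0$. For instance, $f=\log\log(1/|x|)$ lies in $L^2_2$ near $0\in\mathbb{R}^4$, so for any $0\neq\xi\in\mathfrak{g}$ the map $\exp(f\xi)$ is an $L^2_2$ map into $G$ with no limit at $0$. Likewise, an $L^2_2$ bound plus Rellich only gives strong convergence in $L^q_1$ for $q<4$, and $L^2_1$-convergence certainly does not give $C^0$-convergence.

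This breaks the patching. Without uniform closeness of $s^i_{\alpha\beta}$ to $s^\infty_{\alpha\beta}$, you cannot write $s^i_{\alpha\beta}(s^\infty_{\alpha\beta})^{-1}=\exp(\cdot)$ and interpolate with a partition of unity. So the Donaldson--Kronheimer patching lemma does not apply, and you do not obtain the global $g_i$. The last sentence of Step 4 inherits the problem, since you deduced $\eta(P_\infty)=\eta(P)$ from the $g_i$ being continuous bundle isomorphisms over $M\setminus\Sigma$. This borderline issue is exactly why Uhlenbeck's weak compactness theorem assumes $p>n/2$, and it is what Sedlacek's theorem has to get around.

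What is missing is an extra regularity input for the transition functions. One way to supply it is as follows.
\begin{itemize}
\item Let $a,b$ be Coulomb-gauge forms on two balls with $ds=sb-as$ on the overlap. Then $d^*a=d^*b=0$ gives the linear equation $\Delta s=\sum_j\bigl(a_j\,\partial_j s-\partial_j s\,b_j\bigr)$, up to lower-order metric terms.
\item By Uhlenbeck's estimate, its coefficients satisfy $\|a\|_{L^4}+\|b\|_{L^4}\le C\varepsilon_0$.
\item Apply the Hardy--Littlewood--Sobolev estimate $\|\nabla v\|_{L^r}\le C_r\|\Delta v\|_{L^q}$, with $\frac1q=\frac1r+\frac14$, to $v=\chi(s-\bar s)$. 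Absorbing the small critical coefficient yields, for $\varepsilon_0$ small, bounds on $\|ds^i\|_{L^r}$ with some $r>4$ on interior subsets of the overlaps, uniformly in $i$.
\item Hence the $s^i_{\alpha\beta}$ are uniformly H\"older continuous. By Arzel\`a--Ascoli they converge uniformly after shrinking the cover.
\end{itemize}
Only then does the patching go through, giving continuous $g_i$ and with them $\eta(P_\infty)=\eta(P)$. The same absorption is needed to start your bootstrap in Step 4. A weak $L^2_1$ solution of $d^*a=0$, $d^+a+(a\wedge a)^+=0$ is itself at the borderline, so naive bootstrapping gains nothing until the small $L^4$ norm of $a_\infty$ is used to absorb the quadratic term.
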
	

Feehan established that the least eigenvalue of the operator $d_{A}^{+}d_{A}^{+,\ast}$ is locally $L^{p}$-continuous with respect to the connection for all $2\leq p\leq 4$ (cf. \cite[Proposition A.3]{Fee16}). 

\begin{proposition}\label{P7}
	Let $(M,g)$ be a closed, four-dimensional Riemannian manifold and $P\rightarrow M$ be a principal $G$-bundle with $G$ being a compact Lie group.
	 If $\{A_{i}\}_{i\in\mathbb{N}}$ is a sequence $C^{\infty}$ connection on $P$ such that $\{F^{+}_{A_{i}}\}_{i\in\N}$ converges to zero in $L^{2}$-topology,
	  then
$$\lim_{i\rightarrow\infty}\mu(A_{i})=\mu(A_{\infty}),$$
where $A_{\infty}$ is the limit connection given by Theorem \ref{T7}.
\end{proposition}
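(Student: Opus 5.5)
We prove Proposition \ref{P7} by combining Sedlacek's compactness (Theorem \ref{T7}) with Feehan's local $L^{p}$-continuity of $\mu$ (\cite[Proposition A.3]{Fee16}). Away from the bubbling set $\Sigma$ we have good convergence. Near the finitely many points of $\Sigma$ we use the fact that, in dimension four, small balls carry negligible capacity for self-dual two-forms.

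\textbf{Upper bound.} We show $\limsup_{i}\mu(A_{i})\le\mu(A_{\infty})$.
\begin{itemize}
\item Take an eigenform $v_{\infty}$ of $d_{A_{\infty}}^{+}d_{A_{\infty}}^{+,\ast}$ with eigenvalue $\mu(A_{\infty})$.
\item Multiply it by a cutoff $\chi_{\delta}$ that vanishes on $\bigcup_{k}B(x_{k},\delta)$ and equals $1$ outside $\bigcup_k B(x_k,2\delta)$. Choose the logarithmic cutoff, so that $\|d\chi_{\delta}\|_{L^{4}}$, and hence $\|d\chi_\delta\|_{L^2}$, tends to $0$ as $\delta\to0$.
\item Since $P$ and $P_{\infty}$ are isomorphic over $M\setminus\Sigma$, the form $\chi_{\delta}v_{\infty}$ transplants to a section $v_{i}$ of $\Omega^{2,+}(M,\mathfrak{g}_{P})$ via the gauge transformations $g_{i}$.
\item On $M\setminus\bigcup_k B(x_{k},\delta)$ the connections $g_{i}^{\ast}A_{i}$ converge to $A_{\infty}$. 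By Uhlenbeck's theorem away from concentration points this convergence is strong in $L^{4}$ for the connection matrices, after improving the weak $L^{2}_{1}$ convergence, which embeds compactly into $L^{p}$ for $p<4$.
\item The Rayleigh quotient of $v_{i}$ therefore converges to that of $\chi_{\delta}v_{\infty}$. As $\delta\to0$ this tends to $\mu(A_{\infty})$, because $v_{\infty}$ is smooth and bounded and $\|d\chi_\delta\|_{L^2}\to0$.
\end{itemize}

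\textbf{Lower bound.} We show $\liminf_{i}\mu(A_{i})\ge\mu(A_{\infty})$.
\begin{itemize}
\item Normalize eigenforms $v_{i}$ by $\|v_{i}\|_{L^{2}}=1$, with $\|d_{A_{i}}^{+,\ast}v_{i}\|_{L^{2}}^{2}=\mu(A_{i})$, which we may assume bounded.
\item Apply the Bochner formula of Lemma \ref{L1}, or rather its general (non-harmonic) version $d_{A}^{+}d_{A}^{+,\ast}=\tfrac12\nabla_{A}^{\ast}\nabla_{A}+\tfrac{1}{6}R-W^{+}+[F_{A}^{+},\cdot]$. This gives
\[\|\nabla_{A_{i}}v_{i}\|_{L^{2}}^{2}\lesssim \mu(A_{i})+C+\|F_{A_{i}}^{+}\|_{L^{2}}\|v_{i}\|_{L^{4}}^{2}.\]
\item Combine this with Kato's inequality and the Sobolev embedding $L^{2}_{1}\hookrightarrow L^{4}$ for $|v_{i}|$. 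Since $\|F^{+}_{A_{i}}\|_{L^{2}}\to0$, the last term can be absorbed, yielding uniform bounds $\|v_{i}\|_{L^{4}}+\|\nabla_{A_{i}}v_{i}\|_{L^{2}}\le C$.
\item The $L^{4}$ bound gives $\int_{B(x_{k},\delta)}|v_{i}|^{2}\le C\delta^{2}$. Hence no $L^{2}$ mass of $v_{i}$ concentrates at $\Sigma$.
\item On compact subsets of $M\setminus\Sigma$, pass to a subsequence with $g_{i}^{\ast}v_{i}\to v$ strongly in $L^{2}$ and weakly in $L^{2}_{1,\mathrm{loc}}$. Then $\|v\|_{L^{2}}=1$.
\item By lower semicontinuity, $\|d^{+,\ast}_{A_{\infty}}v\|_{L^{2}(M\setminus\Sigma)}^{2}\le\liminf\mu(A_{i})$. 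A finite-energy $L^{2}_{1}$ form on $M\setminus\Sigma$ extends across points in dimension four, by the same log-cutoff argument. Therefore $\mu(A_{\infty})\le\liminf\mu(A_{i})$.
\end{itemize}

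\textbf{Main obstacle.} The hardest step is the uniform $L^{4}$ control of eigenforms near $\Sigma$, where the curvature $F_{A_{i}}^{-}$ concentrates. Our plan is to rely only on the self-dual Weitzenböck formula, which involves $F^{+}_{A_i}$ but not $F^{-}_{A_i}$, so the absorption argument above goes through. Feehan's $L^{p}$-continuity result is invoked on $M\setminus\bigcup_k B(x_{k},\delta)$, where the strong $L^{p}$ closeness (with $2\le p\le4$) of $g_i^*A_i$ to $A_{\infty}$ holds, in order to compare the localized quotients.
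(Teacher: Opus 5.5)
Your argument is correct, and it takes a more complete route than the paper. The paper does not actually prove Proposition~\ref{P7}. It states it right after quoting Feehan's local $L^p$-continuity of $\mu$ \cite[Proposition A.3]{Fee16}, and then uses it together with Theorem~\ref{T7}.

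\textbf{Comparison with the paper.} Read literally, that combination does not close. $A_\infty$ lives on $P_\infty$, which may differ from $P$ when bubbling occurs (only $\eta(P_\infty)=\eta(P)$ is preserved). Moreover, $g_i^*A_i\to A_\infty$ only weakly in $L^2_{1,\mathrm{loc}}(M\setminus\Sigma)$. So there is no global $L^p$ distance to which a fixed-bundle continuity statement could be applied. Your two halves supply exactly what is missing:
\begin{enumerate}
\item[(a)] The cutoff-and-transplant step gives $\limsup\mu(A_i)\le\mu(A_\infty)$.
\item[(b)] For the reverse inequality you use three facts: the self-dual Weitzenb\"ock formula sees only $F^+_{A_i}$; Kato and Sobolev absorption then give a uniform $L^4$ bound on eigenforms, which rules out $L^2$-concentration at $\Sigma$; and a weak limit plus removability of points finishes the argument.
\end{enumerate}
This is in substance the Taubes-type argument behind the results of \cite{Fee16} that the paper quotes as Theorem~\ref{T6}. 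It also makes Proposition A.3 unnecessary. The only local comparison you need is $\|d^{+,\ast}_{g_i^*A_i}w-d^{+,\ast}_{A_\infty}w\|_{L^2}\le C\|a_i\|_{L^2(K)}\|w\|_{L^\infty}$, for a bounded test form $w$ supported in $K$.

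\textbf{Technical repairs.} None of these changes the outcome.
\begin{enumerate}
\item[(i)] A cutoff that vanishes on $B(x_k,\delta)$ and equals $1$ outside $B(x_k,2\delta)$ cannot have $\|d\chi_\delta\|_{L^4}\to0$. In dimension four this norm is scale invariant and bounded below by a positive constant. A genuinely logarithmic cutoff needs an annulus such as $B(x_k,\sqrt{\delta})\setminus B(x_k,\delta)$.
\begin{itemize}
\item For the upper bound you only need $\|d\chi_\delta\|_{L^2}\to0$, since $v_\infty\in L^\infty$.
\item For removability, use $v\in L^4$ (by Fatou, from your uniform bound) together with $\|v\|_{L^4(B(x_k,2\delta))}\to0$.
\end{itemize}
\item[(ii)] Weak $L^2_1$ convergence gives strong $L^p_{\mathrm{loc}}$ convergence only for $p<4$, not for $p=4$. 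You never need $L^4$ convergence.
\item[(iii)] In the lower bound you should handle the cross term explicitly. Set $a_i=g_i^*A_i-A_\infty$ on a compact $K\subset M\setminus\Sigma$. Then $\nabla_{A_\infty}(g_i^*v_i)=g_i^*(\nabla_{A_i}v_i)-[a_i,g_i^*v_i]$. The bracket is bounded in $L^2(K)$ by $C\|a_i\|_{L^4(K)}\|v_i\|_{L^4}$ and tends to $0$ in $L^1(K)$, hence converges weakly to $0$ in $L^2(K)$. This yields:
\begin{itemize}
\item the $L^2_1(K)$ bound needed for Rellich;
\item the weak convergence $d^{+,\ast}_{g_i^*A_i}(g_i^*v_i)\rightharpoonup d^{+,\ast}_{A_\infty}v$ in $L^2(K)$ behind your lower-semicontinuity step;
\item a bound on $\|\nabla_{A_\infty}v\|_{L^2(K)}$ independent of $K$, which is what ``finite-energy $L^2_1$'' in your last step requires.
\end{itemize}
\end{enumerate}
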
 

As an immediate consequence of Theorem \ref{T7} and Proposition \ref{P7}, we obtain the following result for sequences of Vafa--Witten solutions.

\begin{corollary}\label{C3}
Let $(M,g)$ be a closed, four-dimensional Riemannian manifold and $P\rightarrow M$ be a principal $G$-bundle with $G$ being a compact Lie group. Let $\{(A_{i},B_{i})\}_{i\in\mathbb{N}}$ be a sequence $C^{\infty}$ solutions of Vafa--Witten equations with $F_{A_{i}}^{+}\not\equiv 0$. Suppose  $\{F^{+}_{A_{i}}\}_{i\in\N}$ converges to zero in $L^{2}$-topology, then there exists a smooth anti-self-dual $A_{\infty}$ on a principal $G$-bundle $P_{\infty}$ over $M$ with $\eta(P_{\infty})=\eta(P)$ such that $\mu(A_{\infty})=0$.
\end{corollary}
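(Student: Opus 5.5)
The plan is to find a nonzero self-dual form annihilated by $d^{+,\ast}_{A_i}$ directly from the Vafa--Witten equations, so that $\mu(A_i)=0$ for every $i$, and then pass to the limit using Proposition \ref{P7}.

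First, the existence of the limit. Since $F^{+}_{A_i}\to0$ in $L^{2}$, Theorem \ref{T7} applies to the sequence $\{A_i\}$. After passing to a subsequence and applying gauge transformations, it produces a smooth anti-self-dual connection $A_\infty$ on a bundle $P_\infty$ with $\eta(P_\infty)=\eta(P)$. Gauge transformations do not change $\mu$, because $d^{+,\ast}_{g^\ast A}=g^{-1}d^{+,\ast}_{A}g$ is unitarily conjugate, so the Rayleigh quotient is unchanged. Hence Proposition \ref{P7} gives $\mu(A_i)\to\mu(A_\infty)$ along the subsequence.

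Second, we show $\mu(A_i)=0$ for each $i$. Each $(A_i,B_i)$ solves $d^{+,\ast}_{A_i}B_i=0$. By hypothesis $F^{+}_{A_i}\not\equiv0$, and the second equation gives $F^{+}_{A_i}=-\frac18[B_{i\bullet}B_i]$, so $B_i\not\equiv0$. Taking $v=B_i$ in the definition of $\mu(A_i)$ gives Rayleigh quotient $0$. Since $\mu\ge0$, we get $\mu(A_i)=0$ for all $i$.

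Combining the two steps, $\mu(A_\infty)=\lim\mu(A_i)=0$, which is the claim.

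The main obstacle is justifying the use of Proposition \ref{P7}. The limit bundle $P_\infty$ may differ from $P$ when bubbling occurs ($\Sigma\neq\emptyset$), and the convergence is only weak $L^2_1$ on $M\setminus\Sigma$. We will rely on Proposition \ref{P7} exactly as stated, which already asserts the limit identity in this setting via Feehan's local $L^p$-continuity for $2\le p\le4$. Its proof presumably uses cutoffs near $\Sigma$ and the conformal invariance of the $L^2$ norm on self-dual forms. We take it as given, so the corollary reduces to the elementary observation in the second step.
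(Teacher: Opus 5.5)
Your proposal is correct and is essentially the paper's own proof. The hypothesis $F^{+}_{A_i}=-\frac{1}{8}[B_{i\bullet}B_i]\not\equiv0$ forces $B_i\not\equiv0$, so $d^{+,\ast}_{A_i}B_i=0$ gives $\mu(A_i)=0$, and Theorem \ref{T7} together with Proposition \ref{P7} yields $\mu(A_\infty)=\lim_{i}\mu(A_i)=0$. Your extra remark that $\mu$ is gauge invariant fills in a step the paper leaves implicit. As in the paper, all the genuine analytic content, namely how $\mu$ behaves under bubbling, is delegated to Proposition \ref{P7}.
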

\begin{proof}
Since $F_{A}^{+}=-\frac{1}{8}[B_{\bullet} B]$, the condition $F_{A}\not\equiv0$ implies $B\not\equiv0$. 
The equation $d_{A}^{+,\ast}B=0$ implies that $\mu(A_{i})=0$. 
By Theorem \ref{T7} and Proposition \ref{P7}, we get $$\mu(A_{\infty})=\lim_{i\rightarrow\infty}\mu(A_{i})=0.$$
\end{proof}
\begin{remark}
Even if $\|F^{+}_{A_{i}}\|_{L^{2}}$ tends to zero, the $L^2$-norm of the field $B_{i}$ may still tend to infinity. When $\|B_{i}\|_{L^{2}}$ becomes unbounded, the corresponding compactification process---studied by Taubes~\cite{Tau17}---is more complicated than the Uhlenbeck compactification. Corollary \ref{C3} shows that, in the sense of Uhlenbeck compactification, the limiting connection $A_{\infty}$ must be a non‑regular ASD connection.
\end{remark}
Now, we are ready to prove Theorem \ref{T8}.
\begin{proof}[\textbf{Proof of Theorem \ref{T8}}]
Suppose, for contradiction, that no such constant $\varepsilon$ exists. 
Then we may choose a sequence solutions $\{(A_{i},B_{i})\}_{i\in\N}$ of Vafa--Witten equations such that 
$$\|F_{A_{i}}^{+}\|_{L^{2}(X)}\rightarrow 0\quad as \quad i\rightarrow\infty.$$ 
According to Corollary \ref{C3}, $A_{i}$ converges to an ASD connection $A_{\infty}$ in the sense of $L^{2}_{1,loc}(M)$.
 Then $\lim_{i\rightarrow\infty}\mu(A_{i})=\mu(A_{\infty})=0$,
  contradicting our initial assumption regarding the ASD connection $A_{\infty}$ in $\overline{\mathcal{M}}_{ASD}(P, g)$ is regular,
   i.e. $\mu(A_{\infty})>0$. Hence such $\varepsilon$ must exist.
\end{proof}

We denote 
$$\mathfrak{B}_{\varepsilon}(P,g):=\{[A]\in\mathcal{A}_{P} :\|F_{A}^{+}\|_{L^{2}(M)}<\varepsilon \}.$$
By combining the continuity of the eigenvalue with Sedlacek's compactness theorem, Feehan established that, provided every ASD connection in  $\overline{\mathcal{M}}_{ASD}(P, g)$ is regular, there exists a constant $\varepsilon>0$ such that the eigenvalue $\mu(A)$  is bounded below by a uniform positive constant for all connections $A\in\mathfrak{B}_{\varepsilon}(P,g)$.

\begin{theorem}\label{T6}
Let $(M,g)$ be a closed, four-dimensional Riemannian manifold and $P\rightarrow M$ be a principal $G$-bundle with $G$ being a compact Lie group.
 Suppose that all connections in $\overline{\mathcal{M}}_{ASD}(P, g)$ are \textit{regular}.  
 Then there exist  positive constant $\mu_{0}=\mu_{0}(g,P)$ and $\varepsilon=\varepsilon(g,P)$ such that 
\begin{equation*}
\begin{split}
&\mu(A)\geq\mu_{0},\quad \forall ~ [A] \in \overline{\mathcal{M}}_{ASD}(P, g),\\
&\mu(A)\geq\frac{\mu_{0}}{2},\quad \forall ~ [A] \in\mathfrak{B}_{\varepsilon}(P,g).\\
\end{split}
\end{equation*}
\end{theorem}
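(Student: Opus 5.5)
The proof has two parts. First, a uniform lower bound $\mu_0$ on $\mu$ over the compactified ASD moduli space. Second, a perturbation to a neighbourhood $\mathfrak{B}_{\varepsilon}(P,g)$, argued by contradiction using Theorem \ref{T7} and Proposition \ref{P7}.

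\emph{Step 1: the lower bound $\mu_0$.} Here $\overline{\mathcal{M}}_{ASD}(P,g)$ is understood as consisting of ideal connections $([A_\infty],\{x_1,\dots,x_L\})$. The eigenvalue of an ideal point is $\mu(A_\infty)$, computed on the lower bundle $P_\infty$. The Uhlenbeck compactification is compact, so I will show $\mu$ is lower semicontinuous on it; then $\mu$ attains its infimum at some point. By the regularity hypothesis that point has $\mu>0$, which gives $\mu_0:=\min\mu>0$.

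For semicontinuity, take a sequence $[A_i]$ in $\overline{\mathcal{M}}_{ASD}$ converging in the Uhlenbeck sense. It has $F^+_{A_i}=0$, hence trivially $F^+_{A_i}\to 0$ in $L^2$. Proposition \ref{P7} then gives $\mu(A_i)\to\mu(A_\infty)$. Sequences of genuinely ideal points reduce to this case after passing to the lower stratum. So $\mu$ is in fact continuous along convergent subsequences, and a minimizing sequence has a limit realizing the infimum.

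\emph{Step 2: the bound on $\mathfrak{B}_{\varepsilon}$.} Suppose no $\varepsilon$ works. Then for $\varepsilon_i=1/i$ there are smooth connections $A_i$ with $\|F^+_{A_i}\|_{L^2}<1/i$ and $\mu(A_i)<\mu_0/2$. Since $F^+_{A_i}\to0$ in $L^2$, Theorem \ref{T7} yields, after passing to a subsequence and applying gauge transformations, an ASD limit $A_\infty$ on $P_\infty$ with bubbling set $\Sigma$. The ideal connection $([A_\infty],\Sigma)$ lies in $\overline{\mathcal{M}}_{ASD}(P,g)$. Proposition \ref{P7} gives $\mu(A_i)\to\mu(A_\infty)\ge\mu_0$, so $\mu(A_i)>\mu_0/2$ for large $i$, a contradiction. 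Gauge invariance of $\mu$ makes the gauge transformations harmless.

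\emph{Main obstacle.} Everything is routine given Proposition \ref{P7}, but that proposition is doing real work. It asserts convergence of $\mu(A_i)$ across bubbling, with the limit living on a different bundle $P_\infty$. This is the content of Feehan's argument, and I would rely on \cite{Fee16} for it.

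If I had to justify it, I would argue as follows. For the upper bound, cut off a $\mu(A_\infty)$-eigenform near $\Sigma$; by conformal invariance in dimension four the cutoff costs arbitrarily little, so this gives $\limsup\mu(A_i)\le\mu(A_\infty)$. For the lower bound, use the Bochner identity of Lemma \ref{L1} for $d_A^+d_A^{+,\ast}$. The curvature term is controlled by $\|F^+_{A_i}\|_{L^2}\to0$, so normalized eigenforms $v_i$ cannot concentrate at $\Sigma$. Away from $\Sigma$ they converge via $L^2_{1,loc}$ convergence of $A_i$, giving $\liminf\mu(A_i)\ge\mu(A_\infty)$.
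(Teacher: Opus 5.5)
Your proposal is correct and follows the paper's route: the paper gives no independent proof but attributes the theorem to Feehan, describing it as continuity of the least eigenvalue (Proposition \ref{P7}) combined with Sedlacek's compactness (Theorem \ref{T7}). That is exactly your two steps: a minimum $\mu_0>0$ over the compactified moduli space, then a contradiction argument on $\mathfrak{B}_{\varepsilon}(P,g)$. One point should be made explicit: your assertion $([A_\infty],\Sigma)\in\overline{\mathcal{M}}_{ASD}(P,g)$ in Step 2 requires $\overline{\mathcal{M}}_{ASD}(P,g)$ to mean the full space of ideal ASD connections over all lower bundles $P_\ell$ with $\eta(P_\ell)=\eta(P)$, not just the closure of $\mathcal{M}_{ASD}(P,g)$, since a Sedlacek limit of non-ASD connections need not lie in that closure; the paper relies on the same convention implicitly.
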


\begin{remark}
The Theorem \ref{T6} is due to Feehan \cite[Theorems 3.7 and 3.8]{Fee16}. Using his result, we obtain an alternative proof of Theorem \ref{T8} as follows.
\end{remark}

\begin{proof}[\textbf{An alternative proof of Theorem \ref{T8}}]
Suppose, to the contrary, that there exists a smooth solution $(A,B)$ of the Vafa--Witten equations
 such that  $F_{A}^{+} \not\equiv 0$ and $\|F_A^+\|_{L^2(M)} < \varepsilon(g,P)$.
  Since $F_{A}^{+}=-\frac{1}{8}[B_{\bullet} B]$, the condition $F_{A}\not\equiv0$ implies $B\not\equiv0$. 

By Theorem \ref{T6}, we have $\mu(A) \ge \frac{\mu_0}{2} > 0$.  From  the Vafa--Witten equation $d_{A}^{+,\ast}B = 0$, it follows that
\[
0 = \|d_A^{+,\ast} B\|_{L^2(M)}^2\geq \mu(A) \|B\|_{L^2(M)}^2\geq\frac{\mu_{0}}{2}\|B\|_{L^2(M)}^2.
\]
Consequently, $\|B\|_{L^2(M)} = 0$, i.e. $B \equiv 0$, contradicting the assumption $B \not\equiv 0$. Therefore any smooth solution must satisfy $\|F_A^+\|_{L^2(M)} \ge \varepsilon(g,P)$ unless $F_A^+ \equiv 0$.
\end{proof}

\section*{Acknowledgements}
This work was supported by the National Natural Science Foundation of China (Grant Nos. 12271496 to Huang and 12201001 to Zhang), 
the Youth Innovation Promotion Association of the Chinese Academy of Sciences.
 The authors also thank DeepSeek for its assistance in proofreading and improving the grammar and expression of this manuscript.

\section*{Declarations}

\noindent\textbf{Data availability} {This manuscript has no associated data.}

\noindent\textbf{Conflict of interest} The authors state that there is no conflict of interest.

\section*{Appendix: Computation of $|W^+|^2$ for $M = S^1 \times N$}\label{app:compute}
\label{app:Wplus}
The Riemann curvature tensor admits a decomposition into its Weyl tensor $W_{g}$ and a term involving the Schouten tensor:
\begin{equation}
Rm=S_{g}\odot g+W_{g},
\end{equation}
where $\odot$ denotes the Kulkarni--Nomizu product. The Schouten tensor of $g$, denote by $S_{g}$, is given by
\begin{equation}
S_{g}=\frac{1}{n-2}(\mathrm{Ric}_{g}-\frac{R_{g}}{2(n-1)}\cdot g),
\end{equation}
where $\mathrm{Ric}_{g}$ and $R_{g}$ are the Ricci tensor and scalar curvature of $g$ respectively.

Let $(N,g_N)$ be a closed $3$-manifold and consider the product manifold $M = S^1 \times N$ with metric $g_M = dt^2 + g_N$. We work in a local orthonormal frame $\{e_0,e_1,e_2,e_3\}$ where $e_0 = \partial_t$ and $\{e_i\}_{i=1}^3$ is an orthonormal frame on $N$.

For a product metric, the all Riemann curvature components are:
\[
R_{ijkl} = R_{ijkl}^{(N)}, \qquad R_{0i0j} =0,\qquad R_{0ijk}=0,
\]
where $R_{ijkl}^{(N)}$ denotes the Riemann tensor of $g_N$. Consequently,
\[
R_{00} = 0,\quad R_{0i}=0,\quad R_{ij}=R_{ij}^{(N)},\quad R_M = R_N.
\]
In four dimensions, the Weyl tensor $W$ is given by
\[
W_{abcd} = R_{abcd} - \bigl(g_{ac}S_{bd}+g_{bd}S_{ac}-g_{ad}S_{bc}-g_{bc}S_{ad}\bigr),
\]
where $S$ is the Schouten tensor
\[
S_{ab} = \frac{1}{2}\Bigl(R_{ab} - \frac{S_M}{6}g_{ab}\Bigr).
\]
Using the product curvature identities, we compute:
\[
S_{00} = -\frac{S_N}{12},\qquad S_{0i}=0,\qquad S_{ij} = \frac12\Bigl(R_{ij}^{(N)}-\frac{S_N}{6}g_{ij}\Bigr).
\]
Define the trace-free Ricci tensor of $N$:
\[
\operatorname{Ric}_0^{(N)} = \operatorname{Ric}^{(N)} - \frac{S_N}{3}g_N.
\]
Then $R_{ij}^{(N)} = (\operatorname{Ric}_0^{(N)})_{ij} + \frac{S_N}{3}g_{ij}$, and substituting gives
\[
S_{ij} = \frac12 (\operatorname{Ric}_0^{(N)})_{ij} + \frac{S_N}{12}g_{ij}.
\]
A direct computation using the definition of the Weyl tensor gives the following components:
\begin{equation}\label{EA1}
\begin{split}
&W_{0i0j} = -S_{ij} - g_{ij}S_{00} = -\frac12 (\operatorname{Ric}_0^{(N)})_{ij},\\
&W_{0ijl} = 0,\\
&W_{ijkl}=\frac{1}{2}(\operatorname{Ric}_0^{(N)}\odot g)_{ijkl}
\end{split}
\end{equation}
The space of self-dual $2$-forms $\Lambda^+$ is spanned by
\[
\Sigma_1^+ = e_0\wedge e_1 + e_2\wedge e_3,\;
\Sigma_2^+ = e_0\wedge e_2 + e_3\wedge e_1,\;
\Sigma_3^+ = e_0\wedge e_3 + e_1\wedge e_2.
\]
The self-dual Weyl tensor $W^+$ is the restriction of $W$ to $\Lambda^+$. It is known that for a product metric $dt^2 + g_N$ one has
\begin{equation}\label{EA2}
|W^+|^2 = \frac{1}{4} |\operatorname{Ric}_0^{(N)}|^2.
\end{equation}
A concise derivation proceeds as follows. At a point where $\operatorname{Ric}_0^{(N)}$ is diagonal with eigenvalues $\lambda_1,\lambda_2,\lambda_3$, so  that 
$$\lambda_1+\lambda_2+\lambda_3=0.$$ 
One computes using (\ref{EA1}):
\[
W(\Sigma_i^+, \Sigma_i^+) =-\lambda_i,\qquad W(\Sigma_i^+, \Sigma_j^+) = 0\;(i\neq j).
\]
With the normalization  $\langle \Sigma_i^+,\Sigma_j^+\rangle = 2\delta_{ij}$, the endomorphism $W^+$ has matrix entries
$$(W^+)_{ij} = \frac{1}{2} W(\Sigma_i^+,\Sigma_j^+),$$
giving $(W^+)_{ii} = -\frac{1}{2}\lambda_i$. Hence
\[
|W^+|^2 = \sum_{i,j} (W^+_{ij})^2 = \sum_i \bigl(-\frac{1}{2}\lambda_i\bigr)^2 = \frac{1}{4}\sum_i \lambda_i^2.
\]
Since $|\operatorname{Ric}_0^{(N)}|^2 = \sum_i \lambda_i^2$, we obtain (\ref{E2})
This identity is used in the proof of Theorem \ref{T5}.

\end{document}